\theoremstyle{plain}
\newtheorem{theorem}{Theorem}[section]          
\newtheorem{corollary}[theorem]{Corollary}     
\newtheorem{lemma}[theorem]{Lemma}              
\newtheorem{proposition}[theorem]{Proposition}
\theoremstyle{definition}
\newtheorem{definition}[theorem]{Definition}
\newtheorem{remark}[theorem]{Remark}
\newtheorem{example}[theorem]{Example}
\newtheorem{maintheorem}{Theorem}
\numberwithin{table}{section}
\newcommand{\Z}{\mathbb{Z}}
\newcommand{\Q}{\mathbb{Q}}
\newcommand{\R}{\mathbb{R}}\newcommand{\C}{\mathbb{C}}
\newcommand{\HH}{\mathbb{H}}
\newcommand{\g}{\mathfrak{g}}
\DeclareMathOperator{\im}{Im}
\DeclareMathOperator{\diag}{diag}
\DeclareMathOperator{\curv}{curv}
\DeclareMathOperator{\rk}{rk}
\newcommand{\tensor}{\otimes}
\newcommand{\Isom}{\mathrm{Isom}}
\newcommand{\comp}{b_0(\Sigma)}
\title{Positive curvature and rational ellipticity in cohomogeneity three}
\author{Elahe Khalili Samani}
\address[E. Khalili Samani]{Department of Mathematics,\newline\indent Clark University,\newline\indent Worcester, MA 01610, USA}
\email{ekhalilisamani@clarku.edu}
\author{Marco Radeschi}
\address[M. Radeschi]{Universit\'a Degli Studi Di Torino,\newline\indent Dipartimento Di Matematica "G. Peano",\newline\indent Via Carlo Alberto 10, 1023 Torino (TO), Italy}
\email{marco.radeschi@unito.it}
\begin{document}

\begin{abstract}
We prove that a closed, simply connected, positively curved, cohomogeneity-three manifold whose quotient space has no boundary is rationally elliptic,
thus providing a generalization of similar results regarding rational ellipticity of homogeneous, cohomogeneity-one, and almost non-negatively curved cohomogeneity-two manifolds.
\end{abstract}

\maketitle

\section{Introduction}

A long-standing conjecture due to Bott, Grove, and Halperin states that a closed, simply connected, non-negatively curved manifold $M$ is rationally elliptic, that is, the total rational homotopy
$\bigoplus_{i\geq 2}\pi_i(M)\tensor\Q$ is finite dimensional. On the other hand, the problem of exploring the extent to which an effective group action restricts the topology
of the underlying manifold has been of special interest in both transformation groups and the study of non-negatively curved manifolds. 
These have motivated a series of results about rational ellipticity of Riemannian manifolds admitting isometric group actions. It was proved by Serre \cite{Ser53}
that compact, simply connected homogeneous spaces are rationally elliptic. This result was generalized to simply connected cohomogeneity-one manifolds by Grove and Halperin \cite{GH87},
and later by Grove and Ziller \cite{GZ12} to manifolds admitting polar actions with either flat or spherical sections. In \cite{GWY19}, Grove, Wilking, and Yeager proved that a closed, simply connected almost non-negatively curved manifold which admits a cohomogeneity-two action by a compact, connected Lie group is rationally elliptic. In \cite{KR24}, the authors introduced a new criterion for rational ellipticity of $G$-manifolds which led to generalizing the result of Grove-Wilking-Yeager to non-negatively curved, cohomogeneity-three manifolds under the additional assumption of the action being infinitesimally polar.

The current paper deals with more general cohomogeneity-three actions on positively-curved manifolds:

\begin{maintheorem}\label{main-thm:cohomogeneity three}
Suppose $M$ is a closed, simply connected, positively curved Riemannian manifold which admits a cohomogeneity-three action by a closed Lie group $G$ with $\partial (M/G)=\emptyset$. 
Then $M$ is rationally elliptic. 
\end{maintheorem}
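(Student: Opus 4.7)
The plan is to exploit the severe restrictions that positive curvature places on the three-dimensional quotient $M^* := M/G$, combined with the rational-ellipticity criterion developed in \cite{KR24}, to reduce the proof to a short case analysis on the orbit-type structure of $M$.

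First I would study the quotient space. Because $G$ acts by isometries with cohomogeneity three and $M$ has positive sectional curvature, $M^*$ is a closed three-dimensional Alexandrov space of positive curvature with $\partial M^* = \emptyset$. By the classification of such spaces (Galaz-Garc\'ia--Searle and collaborators), $M^*$ is homeomorphic to a spherical space form, possibly with a controlled set of topologically singular points. Its stratification by orbit type then consists of a top stratum of principal orbits, one-dimensional strata of exceptional or singular orbits, and isolated zero-dimensional strata supporting the most singular orbits; the hypothesis $\partial M^* = \emptyset$ rules out codimension-one reflection strata entirely.

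Next I would use the slice theorem to translate this into a description of $M$. Along each one-dimensional stratum $C \subset M^*$, a tubular neighborhood of the preimage is modeled on a disk bundle whose fiber is the slice representation of some isotropy $K \supset H$, where $H$ is the principal isotropy; the condition that $C$ does not meet any boundary forces $K/H$ to be a sphere of positive dimension. At each isolated singular point of $M^*$ the link is a positively-curved two-dimensional Alexandrov space, hence topologically $S^2$, $\mathbb{RP}^2$, or a spherical orbifold, and positive curvature together with Synge- and Frankel-type arguments sharply restricts how these local pieces can be assembled. In this way I aim to reduce the proof to a finite list of combinatorial possibilities for the stratification of $M^*$ together with the associated isotropy data.

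With this structural information in hand, I would apply the criterion of \cite{KR24} in each case. The goal is to decompose $M$ into $G$-invariant open pieces on which the restricted action is either transitive, of cohomogeneity one, or of cohomogeneity at most two on an almost non-negatively curved subspace, so that rational ellipticity of the pieces follows from Serre \cite{Ser53}, Grove--Halperin \cite{GH87}, or Grove--Wilking--Yeager \cite{GWY19} respectively, and then to glue via orbit-bundle fibrations whose fibers are themselves rationally elliptic. The hardest step I anticipate is the local analysis near the isolated zero-dimensional strata of $M^*$: here one must handle singular orbits $G/L$ whose slice representation produces a cone over a non-trivial positively-curved two-orbifold, and certifying that the resulting cohomogeneity-two sub-$G$-space satisfies the almost-non-negative-curvature hypothesis of \cite{GWY19}, or directly fits the framework of \cite{KR24} without needing the infinitesimally polar assumption used there, is where the geometric content of the positive-curvature hypothesis must be fully exploited.
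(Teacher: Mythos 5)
Your overall strategy --- analyze the positively curved $3$-dimensional quotient $M/G$, classify its stratification via the slice theorem, and then reduce each stratification type to a known rational-ellipticity criterion --- is the correct point of departure and matches the paper's skeleton. However, the sketch is missing the mechanisms that actually make the two hard steps go through.

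First, the reduction to a finite list of stratification types is not a soft consequence of the slice theorem plus Synge/Frankel. The paper's Proposition~\ref{P:1-dim stratum} arrives at eighteen labeled graphs by an intricate combinatorial argument whose engine is the branched-cover technique of Section~\ref{SS:branched cover}: one takes $k$-fold branched covers of $M/G$ along singular cycles, checks (via Proposition~\ref{P:double-branched}) that positivity of curvature persists, and then invokes the Hsiang--Kleiner bound of at most three small points in a positively curved Alexandrov space. Iterated branched covers also rule out two disjoint singular cycles, knotted cycles (Lemma~\ref{L:knot}), and many weight configurations. None of this appears in your outline, and without it the ``finite list'' you invoke is not available.

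Second, your gluing step is stated in a way that does not yield the conclusion. Rational ellipticity of $G$-invariant pieces does not automatically glue to rational ellipticity of $M$ along orbit-bundle fibrations; a piece being a fiber bundle over a rationally elliptic base with rationally elliptic fiber controls that piece, but gluing two such pieces is exactly where one needs either a double disk bundle decomposition (Grove--Halperin) or an explicit Mayer--Vietoris computation. The paper is forced to use \emph{both}: Proposition~\ref{P:disk bundle} handles the graphs where one can find subsets $A_*, B_* \subset M/G$ with smooth preimages and product complement (after smoothing via Lemma~\ref{L:hom-to-diff}); for graphs with a weight-two edge and one or two non-orbifold vertices this fails, and Sections~\ref{SS:two}--\ref{SS:one} instead compute $H^*(M;\mathbb{Q})$ via a Mayer--Vietoris sequence associated to removing the singular orbits, then invoke Lemma~\ref{L:RHT}. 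Crucially, this cohomological route depends on Proposition~\ref{P:groups}, which restricts $G$ (up to finite cover) to a product of at most two rank-one groups whenever there is a weight-two edge; that restriction is what makes the singular orbits and their sphere bundles have small, computable cohomology. Your proposal also leans on applying \cite{GWY19} or \cite{KR24} to the local pieces, but \cite{GWY19} concerns closed manifolds and \cite{KR24} Theorem~C only covers the orbifold quotient, which is precisely the case the paper sets aside. So as written the plan has a genuine gap at both the classification stage and the gluing stage.
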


\begin{remark}
The case in which the quotient space $M/G$ is an orbifold follows from Theorem C in \cite{KR24}. Therefore, throughout the paper, we assume that $M/G$ has at least one non-orbifold point.
\end{remark}

The proof of Theorem \ref{main-thm:cohomogeneity three} goes through an analysis of the quotient space. In the first part, we develop two methods to recognize the rational ellipticity 
of the underlying manifold when the quotient space has a certain structure: the first one applies tools from transformation groups and the theory of positively curved manifolds 
to restrict the group $G$ when the quotient $M/G$ has a one-dimensional singular stratum of weight two. When combined with rational homotopy theory, 
this restriction then forces $M$ to have a nice rational homotopy type. The second method uses two special subspaces in $M/G$ to provide a double disk bundle decomposition for $M$.
In the second part of the paper we analyze the possible singular stratifications of the quotient showing in particular that all these cases can be covered by the results of the first part.

This paper is organized as follows. In Section \ref{S:preliminaries}, we introduce some notations and collect the tools required throughout the paper. Sections \ref{S:homotopy}
and \ref{S:disk bundle} cover the techniques applied in the proof of Theorem \ref{main-thm:cohomogeneity three}. Section \ref{S:singular strata} is devoted to classifying the singular stratification 
of the quotient space. In Section \ref{S:proof}, we apply the techniques of Sections \ref{S:homotopy} and \ref{S:disk bundle} to prove Theorem \ref{main-thm:cohomogeneity three}. 
Finally, Appendix A provides a list of actions which lead to singular stratifications discussed in Section \ref{S:singular strata}.

\subsection*{Acknowledgements}
The authors wish to thank Alexander Lytchak and Daryl Cooper for discussions and suggestions on how to prove Lemma \ref{L:hom-to-diff}.
The second author would like to thank both the organizers of the Trimester Program ``Metric Analysis'' and the Hausdorff Center of Mathematics in Bonn where the trimester took place, for the excellent working condition when this work was concluded.
\section{Preliminaries}\label{S:preliminaries}

\subsection{Notation on isometric group actions}
Let $G$ be a compact Lie group which acts isometrically on a Riemannian manifold $M$. In this paper, we will use the following notation:
\begin{itemize}
\item  The action of $g$ on $p\in M$ will be denoted by $g\cdot p$.
\item Given a point $p\in M$, the {orbit through $p$} will be denoted by $G\cdot p$ or $L_p$. The {isotropy group} at $p$ will be denoted by $G_p$.
\item Given an orbit $L$, we denote by $\nu L$ the normal bundle of $L$. Moreover, the normal space of $L$ at a point $p\in L$ will be denoted by $\nu_pL$. If the Euclidean metric on $\nu L$ 
is understood, we denote by $\nu^{<\epsilon} L=\{v\in \nu L\mid \|v\|<\epsilon\}$ the $\epsilon$-disk bundle.
\item The quotient $M/G$ will be understood as a metric space, with the metric induced from the $G$-invariant Riemannian metric on $M$. We will denote by $\pi:M\to M/G$ the canonical projection.
\end{itemize}

\subsection{Alexandrov spaces with positive curvature}

Fix $\kappa\in\R$, and let the $\kappa$-comparison space $M_\kappa$ denote the unique simply connected, $2$-dimensional space with constant curvature $\kappa$. 
Given a length space $X$ and a geodesic triangle $\Delta$ in $X$ with vertices $x,y,z$, define the $\kappa$-\emph{comparison triangle} $\tilde{\Delta}$ (whenever it is defined) 
as a triangle in $M_\kappa$ with vertices $\tilde{x}, \tilde{y}, \tilde{z}$ and edges of the same length as the corresponding ones in $\Delta$. Furthermore, we define the 
$\kappa$-\emph{comparison angle} $\tilde\measuredangle_\kappa x^y_z$ as the angle of $\tilde{\Delta}$ at the vertex $\tilde{x}$.

A compact length space $X$ is an \emph{Alexandrov space with $\curv\geq \kappa$} if for every $x\in X$, there is a neighborhood $U_x$ of $x$ such that for any three points $x_0, x_1, x_2$ in $U_x$, and geodesic triangles
$\Delta_i$ with vertices $x$, $x_i$, $x_{i+1}$ (where the indices are taken modulo 3), one has
\[
\tilde\measuredangle_\kappa x^{x_0}_{x_1}+\tilde\measuredangle_\kappa x^{x_1}_{x_2}+\tilde\measuredangle_\kappa x^{x_2}_{x_0}\leq 2\pi
\]
whenever all three angles are defined.

\begin{example}
The examples we will mostly care about come from quotients of group actions, that is, if $M$ is a Riemannian manifold with $\sec_M\geq \kappa$, and $G$ is a Lie group acting properly on $M$ 
by isometries, then the quotient space $M/G$ is an Alexandrov space with $\curv\geq \kappa$.
\end{example}

Fixing an Alexandrov space $X$ with $\curv\geq \kappa$ and a point $x\in X$, one defines the \emph{space of directions} $\Sigma_x$ of $X$ at $x$ as follows: First take the set $\Sigma_x'$ 
of unit length geodesics $\{\gamma_\alpha:[0, \epsilon) \to X\}_\alpha$ starting from $x$. Next, define a pseudo-distance on $\Sigma'_x$ by
\[
\measuredangle(\gamma_\alpha,\gamma_\beta)=\lim_{s,t\to 0}\tilde\measuredangle_\kappa x^{\gamma_\alpha(s)}_{\gamma_\beta(t)}.
\]
This defines a distance on $\Sigma'_x/\!\!\sim$ where $\gamma_1\sim \gamma_2$ if $\measuredangle_\kappa x^{\gamma_1(t)}_{\gamma_2(t)}=0$. 
Finally, define $\Sigma_x$ as the completion of $\Sigma_x'/\!\!\sim$. One has the following results:

\begin{proposition}
Let $(X,d)$ be an Alexandrov space and $x\in X$. Then:
\begin{itemize}
\item \cite[Theorem 10.9.3 and Corollary 10.9.5]{BBI01} For any sequence $\lambda_i\to \infty$, the sequence of pointed metric spaces $(X, \lambda_i d, x)$ converges in the pointed Gromov-Hausdorff sense to the Euclidean cone over $\Sigma_x$, which is  an Alexandrov space with $\curv\geq 0$. This cone is called the \emph{tangent cone at $x$}, and is denoted by $T_xX$.
\item \cite[Corollary 10.9.6]{BBI01} The space of directions $\Sigma_x$ is a compact Alexandrov space with $\curv\geq 1$ (if $\dim X>2$).
\item \cite[Prop. 1.8]{Gro02} If $X=M/G$, then $\Sigma_x=\Sigma'_x$ (that is, every direction exponentiates to a geodesic) and it is isometric to $\nu^1(G\cdot p)/G_p$,
where $\pi:M\to X$ is the canonical projection and $p\in \pi^{-1}(x)$.
\end{itemize}
\end{proposition}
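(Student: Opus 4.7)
The plan is to establish the three bullets in sequence, with the tangent cone from the first item serving as the foundation. For that first item I would invoke Gromov's precompactness theorem: the rescalings $(X,\lambda_i d, x)$ have $\curv \geq \kappa/\lambda_i^2 \to 0$ and inherit the uniform dimension bound from $X$, so they subconverge in the pointed Gromov--Hausdorff topology to some pointed Alexandrov space $(C,o)$ with $\curv\geq 0$. The identification of $C$ with the Euclidean cone over $\Sigma_x$ then proceeds by matching short unit-speed geodesics $\gamma_\alpha$ at $x$ with rays at $o$ through a diagonal extraction, and noting that the $\kappa$-comparison angles $\tilde\measuredangle_\kappa x^{\gamma_\alpha(s)}_{\gamma_\beta(t)}$ converge to the Euclidean comparison angles at $o$ in the limit space. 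This produces a canonical bijection between $\Sigma_x$ and the unit metric sphere around $o$ in $C$ which promotes to an isometry $T_xX\to C$; scale invariance of cones then forces independence of the subsequence.

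For the second bullet I would use the standard cone--link duality for Alexandrov spaces: the Euclidean cone $C(Y)$ over a length space $Y$ with $\diam(Y)\leq \pi$ has $\curv\geq 0$ if and only if $Y$ has $\curv\geq 1$. The key ingredient is that Euclidean triangle comparison for triangles in $C(Y)$ whose vertices are close to the tip translates, via the law of cosines on the cone, into spherical comparison for the corresponding triangle in the link. Since $T_xX$ has been realized as a cone with $\curv\geq 0$ by the first bullet, its link $\Sigma_x$ must carry $\curv\geq 1$; compactness follows from finite dimensionality of $T_xX$ together with the fact that $\Sigma_x$ sits as a bounded metric subset of $T_xX$.

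For the third bullet I appeal to the differentiable slice theorem: a $G$-invariant tubular neighborhood of $L_p$ in $M$ is equivariantly diffeomorphic to $G\times_{G_p}\nu^{<\epsilon}_p L_p$, so a neighborhood of $x=\pi(p)$ in $X$ is homeomorphic to $\nu^{<\epsilon}_p L_p/G_p$, and the induced metric agrees with the Euclidean one to first order at the tip. Any short geodesic in $X$ issuing from $x$ lifts to a horizontal geodesic in $M$ starting in $L_p$ and perpendicular to it, which corresponds to a ray in $\nu_p L_p$ taken modulo the $G_p$-action on the slice. Hence every equivalence class of directions at $x$ is already represented by a genuine geodesic, so $\Sigma_x = \Sigma'_x$ without passing to a completion, and the resulting bijection with unit rays gives the claimed isometry $\Sigma_x\cong \nu^1(G\cdot p)/G_p$.

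The main obstacle I anticipate is the identification step in the first bullet, namely exhibiting the subsequential Gromov--Hausdorff limit as the Euclidean cone over $\Sigma_x$ rather than as some abstract nonnegatively curved Alexandrov space. This demands a careful matching of $\epsilon$-nets of geodesic germs at $x$ with $\epsilon$-nets of directions in the limit, together with a delicate passage-to-the-limit argument in the $\kappa$-comparison angle formulas; both steps rely on nontrivial Alexandrov-geometric input. Once that is in place, the second bullet is a direct consequence of the cone--link curvature duality, and the third bullet reduces to the slice theorem together with a straightforward computation of geodesic representatives in the quotient.
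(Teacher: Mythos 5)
The paper does not actually prove this proposition: it is a compilation of known facts, each with a citation (Burago--Burago--Ivanov for the first two bullets, Grove's survey for the third), and the authors rely entirely on those references. Your sketch reconstructs the standard arguments from those sources and takes essentially the same route they do. For the first bullet you correctly identify the two-step strategy (Gromov precompactness giving a nonnegatively curved subsequential limit, followed by identifying that limit with the cone over $\Sigma_x$ via convergence of comparison angles); for the second bullet you correctly invoke the cone--link curvature correspondence, $\curv\, C(Y)\geq 0 \iff \curv\, Y\geq 1$ for $\diam Y\leq\pi$; for the third bullet the slice theorem plus horizontality of lifted geodesics is exactly how one shows $\Sigma_x = \Sigma_x'\cong \nu_p^1(G\cdot p)/G_p$.

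Two small points worth tightening. In the second bullet you assume $\diam(\Sigma_x)\leq\pi$ when invoking the cone--link duality; this is true (it follows from the $\kappa$-comparison inequality in the definition of the angle metric and the triangle inequality for comparison angles), but it is not automatic and needs to be recorded. You also assert that compactness of $\Sigma_x$ follows from finite-dimensionality of $T_xX$; this is correct, but note that the proposition as stated does not explicitly impose finite dimension on $X$, and the $\dim X>2$ hypothesis appears only in the second bullet to ensure the link is of dimension $\geq 1$ so that the cone--link correspondence applies cleanly. Neither of these affects the overall correctness of your outline; they are the sort of bookkeeping one would have to supply to turn the sketch into the full argument as it appears in the references.
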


\begin{definition}\label{D:gamma-pm}
For a piecewise geodesic $\gamma:(-\epsilon,\epsilon)\to X$ with breaking point at $\gamma(0)=x$, the two geodesics $\gamma_1,\gamma_2:[0,\epsilon)\to X$ given by $\gamma_1(t)=\gamma(t)$ 
and $\gamma_2(t)=\gamma(-t)$ represent points in $\Sigma_x$ which we will denote by $\gamma^+(0)$ and $\gamma^-(0)$, respectively.
\end{definition}

\begin{definition}
Given an Alexandrov space $X$ and a positive integer $k$, the \emph{$k$-dimensional stratum of $X$} is the set of points $x\in X$ such that $T_xX$ is isometric to a product $\R^k\times X'$ 
for some space $X'$, but not isometric to $\R^{k+1}\times X''$ for any $X''$.
\end{definition}

It is well-known that each $k$-dimensional stratum is a smooth Riemannian manifold, locally convex in $X$, and its closure is the union of the stratum itself, with strata of lower dimension. 
In particular, the zero-dimensional stratum consists of a discrete set of points, and the $1$-dimensional stratum consists of a union of geodesics in $X$, ending at zero-dimensional strata.

\subsection{Classification of singular points in the quotient of a cohomogeneity-three group action}\label{SS:points}

If $G$ is a connected Lie group which acts properly and isometrically on a closed, simply connected, positively curved Riemannian manifold $M$ with $\partial (M/G)=\emptyset$, then we know the following:
\begin{enumerate}
\item $M/G$ is an Alexandrov space with $\curv>0$ and the canonical projection $\pi:M\to M/G$ is a submetry (i.e. $\pi(B_r(p))=B_r(\pi(p))$ for any $p\in M$ and $r>0$). 
\item By Corollary IV.4.7 in \cite{Br72}, it follows that $M/G$ is homeomorphic to $S^3$.
\item Singular strata are either $1$-dimensional or zero-dimensional, and the whole singular set is a disjoint union of circles and graphs.
\end{enumerate}

We can in fact say much more about the singular set: recall that given a $G$-orbit $L$ of $M$ and a point $p\in L$, by the Slice Theorem, the normal exponential map 
$U_p:=\nu_p^{<\epsilon}L\to M$ induces a strata-preserving homeomorphism $U_p/G_p\to B_\epsilon(p_*)\subseteq M/G$ where $p_*=\pi(p)$.
Furthermore the tangent cone $T_{p_*}M/G$ is isometric to $\nu_pL/G_p$, where the slice representation of $G_p$ on $\nu_pL$ has cohomogeneity three and $\nu_p L/G_p$ 
has empty boundary. The following classification can be found in Straume's classification \cite{Str94} but it is not immediate to parse it from there, so for the sake of completeness,
we provide a proof here.

\begin{lemma}\label{L:codim2}
Suppose that a (not necessarily connected) compact Lie group $H$ acts linearly on a Euclidean space $V$ by isometries, with $\dim V/H=3$ and $\partial(V/H)=\emptyset$. 
Let $S(V)$ be the unit sphere in $V$. Then $S(V)/H$ is a $2$-orbifold, isometric to:
\begin{enumerate}
\item $S^2_{pp}=S^2(1)/\Z_p$ (resp. $S^2_{pqr}(1)=S^2(1)/G_{pqr}$, where the possible integers $p,q,r$ and the finite group $G_{pqr}\subseteq\mathrm{SO}(3)$ are given in Table \ref{Ta:groups}). 
In this case, $V\simeq \R^3$ and $H=\mathbb{Z}_p$( resp. $H=G_{pqr}$) with the action provided in Table  \ref{Ta:groups}.
\begin{table}[h!]\label{Tb:Gpqr}
\centering
\begin{tabular}{|c|c |c |c | c|} 
\hline
Case& $p$ & $q$ & $r$ & $G_{pqr}$\\\hline\hline
a.& $p$ & - & - & \{Rotation by $2\pi \over p$ on the $xy$-plane\}\\\hline
b.& $2$ & $2$ & $k$ & \{Isometries of regular $k$-gon in $\R^2\subseteq \R^3$\}$\cap\mathrm{SO}(3)$\\\hline
c.& $2$ & $3$ & $3$ & \{Isometries of the tetrahedron\}$\cap\mathrm{SO}(3)$\\\hline
d.& $2$ & $3$ & $4$ & \{Isometries of the cube and octahedron\}$\cap\mathrm{SO}(3)$\\\hline
e.& $2$ & $3$ & $5$ &\{Isometries of the dodecahedron and icosahedron\}$\cap\mathrm{SO}(3)$\\
\hline
\end{tabular}
\smallskip
\caption{Finite subgroups of $\mathrm{SO}(3)$.}
\label{Ta:groups}
\end{table}
\item $S^2_{pq}=S^3/S^1$ with $\gcd(p,q)=1$, and $H=S^1$ acts on $S^3$ via the weighted Hopf action.
\item $S^2_{pp}(4)=S^2(4)/\Z_p$ (resp. $S^2_{pqr}(4)=S^2(4)/G_{pqr}$ where $p,q,r$ and $G_{pqr}$ are as in Table  \ref{Ta:groups}). In this case the group $H$ admits an extension $1\to S^1\to H\to K\to 1$, where $S^1=H_0$ acts on $V\simeq \C^2$ via the Hopf action, and $K=\Z_p$ (resp. $K=G_{pqr}$).
\item $S^2_{pq}$ with $d=\gcd(p,q)>1$. In this case, $V\simeq \R^4$ and $H=S^1\times \Z_d$ maps into a maximal torus $T^2\subset\mathrm{SO}(4)$ 
as $(s,m)\mapsto \left({p\over d}s+m,{q\over d}s\right)$.
\end{enumerate}
\end{lemma}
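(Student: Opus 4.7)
The plan is to exploit the fact that $V/H$ is the Euclidean cone on $X := S(V)/H$. The hypotheses $\dim V/H = 3$ and $\partial(V/H) = \emptyset$ will immediately yield that $X$ is a closed $2$-dimensional Alexandrov space with $\sec \ge 1$ and $\partial X = \emptyset$; as a quotient of a sphere by a compact Lie group, $X$ is automatically a $2$-orbifold. My first move will be to invoke the classification of closed positively-curved $2$-orbifolds without boundary---the smooth $S^2$, the ``good'' quotients $S^2_{pp}$ and $S^2_{pqr}$ for finite $\Z_p, G_{pqr} \subset \mathrm{SO}(3)$, and the ``bad'' weighted spheres $S^2_{pq}$ with $\gcd(p, q) = 1$ and $p \ne q$. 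The metric (in particular the scale) of $X$ is an isometric invariant, and combined with the orbifold structure it will provide a nontrivial constraint on the representation $V$ and the group $H$.

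The central technical step is to show $\dim V \le 4$. If $\dim V \ge 5$, principal $H$-orbits have dimension $\ge 2$, so $\dim H^0 \ge 2$. The no-boundary hypothesis then forbids $V$ from containing any trivial $\R$-summand under $H^0$, since such a summand yields a fixed axis whose normal slice representation gives boundary in the quotient. A case analysis of the faithful non-trivial representations of the connected compact Lie groups of dimension $\ge 2$---tori $T^k$, $\mathrm{SU}(2)$, $\mathrm{SO}(3)$, $\mathrm{SU}(2) \times \mathrm{SU}(2)$, $\mathrm{SO}(4)$, and their combinations---on $\R^n$ with $n \ge 5$ will then show that either the cohomogeneity is $\ne 3$ or a trivial summand is unavoidable. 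For instance, $\mathrm{SO}(3)$ acting on $\R^3 \oplus \R^3$ does have cohomogeneity $3$, but its invariants $|v_1|^2, |v_2|^2, v_1 \cdot v_2$ satisfy $(v_1 \cdot v_2)^2 \le |v_1|^2 |v_2|^2$, producing boundary. I will cross-check this analysis against Straume's classification \cite{Str94} of cohomogeneity-$2$ actions on spheres to make sure no case is missed.

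Once $\dim V \le 4$ is established, the rest is routine casework. If $\dim V = 3$, then $H$ is finite, and $\partial(V/H) = \emptyset$ forces $H \subset \mathrm{SO}(3)$; the classical list of finite subgroups of $\mathrm{SO}(3)$ (Table \ref{Ta:groups}) gives case (1). If $\dim V = 4$, principal orbits are $1$-dimensional, so $H^0$ acts effectively as $S^1$, and no trivial summand means $V \cong \C_p \oplus \C_q$ with $p, q \ne 0$. Taking $H = H^0$ and reducing by the kernel $\Z_{\gcd(p,q)}$ gives case (2). Extending $H^0$ by a nontrivial finite group in the Hopf case $p = q$ yields case (3), with the extra finite factor embedding into $\Isom^+(S^2(4)) = \mathrm{SO}(3)$; extending with $p \ne q$ by a disconnected $1$-dimensional subgroup of the maximal torus $T^2 \subset \mathrm{SO}(4)$ yields case (4). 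I expect the main obstacle to be Step $2$ (the dimension bound), with a secondary subtlety being the explicit verification that case (4) really produces the orbifold $S^2_{pq}$ with the original non-coprime weights $p, q$, for which one must carefully trace how the discrete $\Z_d$ factor combines with the weighted $S^1$-action.
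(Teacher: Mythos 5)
Your proposal is sound in outline but takes a genuinely different — and substantially more laborious — route than the paper, with the divergence concentrated exactly at your self-identified ``main obstacle.'' The paper never proves the dimension bound $\dim V\leq 4$ up front. Instead, it observes that $S(V)/H_0$ being a $2$-orbifold (by Lytchak--Thorbergsson \cite[Cor.\ 1.2]{LT10}) makes the action infinitesimally polar \cite[Thm.\ 1.1]{LT10}, and then Lytchak's resolution theorem \cite[Thm.\ 1.6]{Lyt10} yields directly that there are \emph{no singular orbits at all}; combined with triviality of principal isotropy (Wilking), this means every isotropy group is finite, i.e.\ $H_0$ acts almost freely on $S(V)$. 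Almost-freeness instantly gives $\rk H_0\leq 1$ (a $2$-torus in $\mathrm{O}(n)$ acting linearly always has a point with positive-dimensional stabilizer), so $H_0$ is trivial, $S^1$, or covered by $\mathrm{SU}(2)$; the $\mathrm{SU}(2)$ option dies by parity ($\dim S(V)/H_0 = 4k-4\neq 2$), and $\dim V\in\{3,4\}$ falls out as a corollary. Your route, by contrast, tries to establish $\dim V\leq 4$ by a case analysis of compact Lie groups of dimension $\geq 2$, filtered through Straume's classification. This can in principle work, but you should be aware that your ``no trivial $\R$-summand under $H_0$'' criterion only covers part of the picture: for $\rk H_0\geq 2$ (e.g.\ $T^3$ on $\C^3$, or $\mathrm{U}(2)$ and various products in dimension $7$ and up) there is no trivial summand, yet the quotient develops boundary through \emph{singular} orbits whose slice quotients are intervals — detecting this requires case-by-case inspection of slice representations rather than the single trivial-summand argument. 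That is precisely the work Lytchak's theorem short-circuits in the paper.

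Your handling of cases (2)--(4) once $H_0=S^1$ is morally the same as the paper's, but note that the paper makes essential use of Mendes \cite{Men21} to lift isometries of the quotient $S^3/H_0$ back to $S^3$ uniquely, and to see that in case (4) the lifted $\Z_d$ commutes with $H_0$ — the step you flag as a ``secondary subtlety'' is exactly where that lifting result is needed. You would also want to justify your claim that a $2$-dimensional quotient is automatically a Riemannian orbifold: this is not formal, but is the Lytchak--Thorbergsson curvature-explosion result the paper already cites. In summary: no fatal gap, but the dimension-bound step is a real piece of work that you have outlined rather than executed, and the paper's route (orbifold quotient $\Rightarrow$ infinitesimally polar $\Rightarrow$ no singular orbits $\Rightarrow$ almost free $\Rightarrow$ rank one) buys you that step for free.
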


\begin{proof}
Let $H_0$ be the identity component of $H$. Since we are assuming that $\partial(V/H)=\emptyset$, it follows that $\partial(S(V)/H_0)=\emptyset$ as well, and the principal isotropy group is trivial by \cite[Lemma 5.1]{Wil04}. In particular, exceptional orbits have finite isotropy group.

In our case, $S(V)/H_0$ is 2-dimensional, hence an orbifold (cf. \cite[Corollary 1.2]{LT10}) and therefore the action is infinitesimally polar (cf. \cite[Theorem 1.1]{LT10}). By \cite[Theorem 1.6]{Lyt10}, we have that there are no singular orbits, and therefore all isotropy groups for the $H_0$-action (hence for the $H$-action as well) are finite.

If $H_0$ is trivial, then $H$ is finite, and thus $\dim V=3$. The specific groups follow from classical classification results on finite subgroups of $\mathrm{SO}(3)$, giving case 1.

If $H_0\neq \{e\}$, we know that $\rk H_0=1$ for otherwise a maximal torus of dimension $\geq 2$ would have singular points. Thus $H_0$ is either $S^1$ or it is covered by $\mathrm{SU}(2)$. 
However, by standard representation theory of $\mathrm{SU}(2)$, almost free $\mathrm{SU}(2)$ representations have $\dim S(V)=4k-1$, thus $\dim S(V)/H_0=4k-4\neq 2$. 
The only possibility is then $H_0=S^1$, $V=\C\oplus \C$, $S(V)=S^3$, and the action is $w\cdot (z_1,z_2)=(w^pz_1,w^qz_2)$ for some $p,q$ with $\gcd(p,q)=1$, and the quotient space is $S^3/H_0=S^2(4)$ 
if $p=q=1$, $S^2_{1,q}$ if $q>1$, or $S^2_{p,q}$ if $q,p>1$. If $H=H_0$, this gives case 2.

Finally, if $H$ is disconnected and not discrete, then it fits in the exact sequence $1\to S^1\to H\to \pi_0(H)\to 1$ and the $H$-action on $S^3$ descends to a $\pi_0(H)$-action by isometries on $S^3/H_0$, 
which can be individually lifted to isometries of $S^3$. On the one hand, it is known that the subgroup $\mathrm{O}(4)^{desc}\supseteq H$ of isometries of $S^3\subset \C\oplus \C$ 
that descend to $S^2_{p,q}=S^3/H_0$ is the normalizer of $H_0\subset O(4)$, which is either the maximal torus $T^2$ of $\mathrm{O}(4)$ if $(p,q)\neq(1,1)$ 
or $\mathrm{U}(2)\cup\mathrm{U}(2)\circ c$ (where $c(z_1,z_2)=(\bar{z}_1,\bar{z}_2)$) if $p=q=1$. On the other hand, $c\notin H$ because otherwise $S^3/H$ would be covered by $S^3/c\cdot H_0\simeq D^2$ 
hence it would have nonempty boundary. Therefore either way, the group $\pi_0(H)$ is contained in a connected subgroup of $\mathrm{O}(4)^{desc}/H_0=Isom(S^3/H_0)$. 
Furthermore, by \cite{Men21} all isometries in the identity component of $\Isom(S^3/H_0)$ can indeed be lifted. Since every finite subgroup of $\Isom(S^2(4))_0=SU(2)$ 
(or $\Isom(S^2_{ab})_0=S^1$) provides a unique $H$-action on $S^3$ which restricts to the corresponding action of $H_0$, one gets to case 3 if $S^3/H_0=S^2(4)$, 
and to case 4 if $S^3/H_0=S^2_{pq}$ (in case 4, the lifted action of a generator of $\pi_0(H)=\Z_d$ commutes with the $H_0$-action by \cite{Men21}).
\end{proof}

\begin{definition}
We will call a singular point $p_*\in M/G$ a \emph{point of type (1), (2), (3), (4)} if the isotropy representation of $H=G_p$ on $\nu_pL_p$ ($p\in \pi^{-1}(p_*)$) fits in the corresponding class of Lemma \ref{L:codim2} above. We furthermore divide the points of type (1) into types (1.a) through (1.e) according to the specific group $G_{pqr}$ from Table \ref{Ta:groups}.
\end{definition}

\begin{definition}
We call a singular point $p_*$ of $M/G$:
\begin{enumerate}
\item an \emph{orbifold point} if there is a neighbourhood of $p_*$ isometric to a Riemannian orbifold. By the Slice Theorem, these correspond to points of type (1).
\item a \emph{small point} if for any three points in $\nu_p^1L_p/G_p$, the sum of their distances is $\leq \pi$ (in the literature, such a space is said to have $3$-extent $\leq \pi/3$). By \cite{HK89} and \cite{MGS22} these correspond to either non-orbifold points, or points of type (1.c), (1.d), (1.e).
\end{enumerate}
\end{definition}

 {As a result, we have in particular the following:

\begin{lemma}\label{lem:zero-dimensional}
Suppose that $X$ is an Alexandrov space without boundary that arises as the quotient space of a cohomogeneity-three action on a closed, simply connected, 
positively curved Riemannian manifold $M$. Then:

\begin{enumerate}
\item The singular set $\Gamma$ is a disjoint union of a (possibly disconnected) graph in $X$ whose vertices are zero-dimensional strata and whose edges are one-dimensional strata and, possibly, circles corresponding to closed one-dimensional strata.
\item Every vertex of $\Gamma$ has valency at most three.
\item $\Gamma$ contains at most three small points.
\item $\Gamma$ contains at most three vertices with valency $\leq 2$.
\end{enumerate}
\end{lemma}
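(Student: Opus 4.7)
The plan is to address the four items in turn, invoking Lemma~\ref{L:codim2} and classical Alexandrov comparison. For (1), note that $X=M/G$ is a $3$-dimensional Alexandrov space of positive curvature with $\partial X=\emptyset$, so its singular strata have dimension $0$, $1$, or $2$. A point in a $2$-dimensional stratum would have $T_xX\cong \R^2\times C(Y)$ for some compact $0$-dimensional Alexandrov space $Y$ of $\curv\geq 1$; the only options are two antipodal points (yielding $T_xX\cong \R^3$, a smooth $3$-stratum point) or a single point (forcing $\partial X\neq\emptyset$). Both are excluded, so $\Gamma$ decomposes into $1$-dimensional geodesic arcs with endpoints in $0$-strata, closed $1$-strata (circles), and $0$-strata, giving the claimed graph description. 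For (2), the Slice Theorem identifies $\Sigma_x$ at a vertex $x$ with $S(\nu_pL_p)/G_p$, which by Lemma~\ref{L:codim2} is one of $S^2_{pp},S^2_{pqr},S^2_{pq},S^2_{pp}(4),S^2_{pqr}(4)$. The edges of $\Gamma$ at $x$ correspond to the orbifold (cone) points of $\Sigma_x$, since a direction $v\in\Sigma_x$ lies in a $1$-dimensional stratum of $T_xX=C(\Sigma_x)$ if and only if $v$ is such a cone point. Each model carries at most three cone points, so the valency at $x$ is at most three.

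For (3), I would apply a four-point extent argument. Suppose toward contradiction that $x_1,\ldots,x_4\in X$ are all small, and rescale so that $\curv X\geq 1$. At each $x_i$, the three directions in $\Sigma_{x_i}$ pointing to the remaining $x_j$ satisfy, by the small hypothesis,
\[
\sum_{\{j,k\}\subset\{1,\ldots,4\}\setminus\{i\}}\measuredangle(x_j,x_i,x_k)\leq \pi.
\]
Summing over $i=1,\ldots,4$ and regrouping by the four face triangles $F=\{x_i,x_j,x_k\}$ yields $\sum_F(\text{angle sum of }F\text{ in }X)\leq 4\pi$. By Toponogov's theorem, the angle sum of each $F$ in $X$ dominates that of its comparison triangle in $S^2$, which by spherical Gauss--Bonnet strictly exceeds $\pi$ whenever the triangle is non-degenerate. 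Non-degeneracy holds automatically here: if three of the $x_i$ lay on a single geodesic, at the middle vertex two of the three tangent directions would be antipodal in $\Sigma_{x_i}$ and the third would lie at some angle $\theta\in[0,\pi]$, producing a pairwise-distance sum $\pi+\theta+(\pi-\theta)=2\pi>\pi$ and violating the small condition at that vertex. Hence every face is non-degenerate, the total strictly exceeds $4\pi$, and we reach a contradiction.

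For (4), the plan is to reduce to (3). If $x\in\Gamma$ is a vertex with valency at most two, then $\Sigma_x$ has at most two cone points. The only orbifold model in Lemma~\ref{L:codim2} with at most two cone points is $S^2_{pp}$ (case 1.a); but $S^2_{pp}$ is a spherical suspension of $S^1/\Z_p$, so $T_xX\cong\R\times C(S^1/\Z_p)$ splits off an $\R$-factor, contradicting $x$ being a $0$-dimensional stratum. Therefore $x$ must be of non-orbifold type (2), (3.a), or (4), and by the characterization of small points recalled above, every non-orbifold vertex is small. Applying (3) then bounds the number of such vertices by three. The most delicate step is~(3), where strict inequality in the $4$-point comparison must survive degenerate spherical tetrahedra; the collinearity/antipodality observation above handles this obstacle uniformly by appealing to the small hypothesis itself.
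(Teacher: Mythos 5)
Your proof is correct and follows essentially the same path as the paper for parts (1), (2), and (4): you use the Slice Theorem and Lemma~\ref{L:codim2} to identify the local cone structure of the quotient, observe that the cone points of the link $\Sigma_x$ correspond to edges emanating from $x$, bound the number of cone points in each model by three, and reduce (4) to (3) by noting that a vertex of valency $\le 2$ cannot be of type $(1.a)$ (the only orbifold model with $\le 2$ cone points, which splits off an $\R$-factor and so lies on an edge rather than being a vertex), hence must be a non-orbifold point, hence small. The only genuine divergence is in part~(3): the paper simply cites Hsiang--Kleiner and Grove--Markvorsen for the fact that a positively curved Alexandrov space has at most three small points, whereas you re-derive it from scratch via the classical four-point angle-comparison estimate. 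Your derivation is sound: summing the small-point inequality at each of four hypothetical small points and regrouping the $12$ angles by the four faces gives $\sum_F \operatorname{anglesum}(F) \le 4\pi$, while Toponogov together with spherical Gauss--Bonnet forces each face to contribute strictly more than $\pi$ once non-degeneracy is secured; and your observation that a degenerate comparison face forces two directions in some $\Sigma_{x_i}$ to be at distance $\pi$ (making the three-direction perimeter there at least $2\pi$, violating smallness) closes that loop. This buys you a self-contained argument at the cost of a slightly longer proof; the paper's choice to outsource part (3) to the literature is the only stylistic difference. One cosmetic note: in part~(1), phrasing the exclusion of a $2$-dimensional stratum via a ``$0$-dimensional Alexandrov space of $\curv\ge 1$'' is a little loose, since the $\curv\ge 1$ condition is stated in \cite{BBI01} only for $\dim>2$; the cleaner route (consistent with the paper) is to note directly that Lemma~\ref{L:codim2} forces $\Sigma_{x_*}$ to be a $2$-orbifold, so its only singularities are isolated cone points and no $\R^2$-factor can split off the tangent cone unless $x_*$ is regular.
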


\begin{proof}
Given a point $x_*\in X$ and $x\in \pi^{-1}(x_*)$ with respect to the canonical projection $\pi:M\to M/G$, the Slice Theorem implies that a neighborhood of $x_*$ is equivalent to the cone 
over $S(V)/H$, where $V=\nu_p(G\cdot x)$ and $H=G_x$. By Lemma \ref{L:codim2} above we have that:
\begin{itemize}
\item If $S(V)/H$ is of type (1.a), then $x_*$ is a point in the interior of an edge of the singular stratum.
\item If $S(V)/H$ is of type (1.b), (1.c), (1.d), (1.e) or (3) with $G_x=G_{pqr}$, then $x_*$ is a vertex of the singular stratum, adjacent to three edges.
\item If $S(V)/H$ is of type (2) or (4) with $p,q>1$, or of type (3) with $G_x=\Z_p$ then $x_*$ is a singular vertex, adjacent to two edges.
\item If $S(V)/H$ is of type (2) with $1=p<q$, then $x_*$ is a singular vertex adjacent to one edge.
\item If $S(V)/H$ is of type (2), with $p=q=1$, then $x_*$ is an isolated singular vertex.
\end{itemize}

By \cite{HK89, GM}, since $X$ has  positive curvature, there can be at most three small points. Since points with valency $\leq 2$ are small, 
it follows in particular that there are at most three points with valency $\leq 2$.
\end{proof}

\begin{definition}\label{D:lab-graph}
We define a \emph{labeled graph} as a graph with edges weighted by integers $\geq 2$, and vertices marked either as \emph{hollow}, or \emph{full}.

Given a $3$-dimensional quotient $X=M/G$ with $\partial X=\emptyset$, the \emph{labeled graph} of $X$ is the labeled graph $\Gamma$ with:
\begin{itemize}
\item A vertex for every zero-dimensional stratum, marked as hollow if it is an orbifold point and full otherwise.
\item One edge for each one-dimensional stratum of $X$, and multiplicity given by the integer $p$ such that the space of directions of the points in that stratum is $S^2_{pp}$.
\end{itemize}
\end{definition}
}

\subsection{Quotient geodesics}
 
By the Slice Theorem, it follows that for any point $p_*$ in a quotient space $M/G$, the space of directions $\Sigma_{p_*}\simeq \nu^1_p(G\cdot p)/G_p$ inherits an involution 
$a:\Sigma_{p_*}\to \Sigma_{p_*}$ from the antipodal map on $\nu_p^1(G\cdot p)$. A piecewise geodesic curve $\gamma:[a,b]\to M/G$ is called a \emph{quotient geodesic} 
if at any point $t$, the two one-sided derivatives $\gamma^+(t),\gamma^-(t)\in \Sigma_{\gamma(t)}$ (see Definition \ref{D:gamma-pm}) are related by $\gamma^-(t)=a(\gamma^+(t))$. 
It was shown in \cite{MR20} that quotient geodesics coincide with projections to $M/G$ of horizontal geodesics in $M$. Furthermore, by \cite{LT10}, it follows that if a quotient geodesic
starts tangent to the $1$-dimensional stratum of $M/G$, then it stays in the singular stratum for all times. The image of such a quotient geodesic will then be a union of edges of the singular set 
$\Gamma$ (see the previous Section).

\begin{lemma}\label{L:no-only-orb}
Let $G$ be a connected, compact Lie group which acts isometrically on a closed, positively curved Riemannian manifold $M$ with cohomogeneity three and $\partial(M/G)=\emptyset$. 
Then any quotient geodesic $c:\R\to M/G$ in the singular set $\Gamma$ must intersect a non-orbifold point. In particular, $\Gamma$ only consists of a (possibly disconnected) graph.
\end{lemma}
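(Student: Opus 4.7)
The plan is to argue by contradiction. Suppose $c:\R\to M/G$ is a quotient geodesic whose image lies in $\Gamma$ and avoids every non-orbifold point; I derive a contradiction. The ``in particular'' claim follows at once, since a closed one-dimensional stratum of $M/G$ (a circle in $\Gamma$) parametrizes to such a $c$: every point of a one-dimensional stratum is of type (1.a), hence orbifold.

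The first step is to show that $\Gamma$ is a forest. Since the principal isotropy of the $G$-action is trivial (by the proof of Lemma \ref{L:codim2}) and $G$ is connected, the restriction of $\pi$ to the principal stratum is a principal $G$-bundle over $S^3\setminus \Gamma$. The long exact sequence of homotopy of this bundle together with $\pi_1(M)=1$ shows that $\pi_1(S^3\setminus\Gamma)$ is generated by the projections of the meridians around the codimension-two preimage $\pi^{-1}(\Gamma)\subset M$. A slice computation (each edge has a local $\Z_{p_e}$-rotation action on its $\R^3$-slice) shows that such a meridian in the principal stratum projects to $p_e$ times the standard small meridian of $S^3\setminus\Gamma$ around the edge $e$ of weight $p_e\geq 2$. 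Combining this with Alexander duality $H_1(S^3\setminus\Gamma)\cong H_1(\Gamma)$ and the constraint $p_e\geq 2$ eventually forces $b_1(\Gamma)=0$, so $\Gamma$ is a forest.

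Next, by Lemma \ref{L:codim2} every orbifold vertex has valency three, and by Lemma \ref{lem:zero-dimensional} at most three vertices of $\Gamma$ have valency $\leq 2$. Since every nontrivial tree has at least two degree-one vertices, $\Gamma$ has at most one nontrivial tree component, whose shape must be a single edge, a path of length two, or a ``Y'' with three leaves and one internal vertex of degree three. In each of these shapes the valency-$\leq 2$ vertices are non-orbifold; in the Y-shape the internal vertex is forced to be of type (1.b) by the small-points bound. A computation of the antipodal involution on $\Sigma_v=S^2_{pqr}$ at each orbifold vertex type using Table \ref{Ta:groups} shows that it either fixes all three corners (types (1.d), (1.e), and (1.b) with even $k$) or fixes exactly one (types (1.c), and (1.b) with odd $k$). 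A direct case-by-case inspection then yields, in each of the tree shapes, that every bi-infinite quotient geodesic on the tree must eventually reach a valency-$\leq 2$ (hence non-orbifold) vertex, contradicting the assumption.

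The main obstacle is the forest step. Concluding from the $G$-bundle exact sequence and the weight-amplification of meridians that $H_1(\Gamma)=0$ requires a careful use of Alexander duality together with the edge-weight constraints; in the general case (e.g., a ``theta'' graph with pairwise coprime edge weights) the naive $\pi_1$-argument is not immediately obstructive, and additional input from the classification of singular stratifications developed in Section \ref{S:singular strata} may be required to complete the argument.
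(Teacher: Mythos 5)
Your proposal takes a completely different route from the paper, and unfortunately it rests on a false intermediate claim and has a gap you yourself acknowledge.

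The decisive problem is the ``forest step.'' You attempt to show $H_1(\Gamma)=0$, but this is not true in the setting of the lemma: the singular graph $\Gamma$ of a positively curved cohomogeneity-three quotient can certainly have cycles. Indeed, Proposition \ref{P:1-dim stratum} lists graphs 7 through 16 and 18 as admissible singular stratifications, all of which contain cycles, and Table \ref{Tb:Examples-list} realizes several of them by actual actions (for instance, the $SU(2)\times G^*_{pqr}$ action on $S^6$ realizes graph 16, whose underlying graph has $b_1=3$). The content of the lemma is weaker and more subtle: it does \emph{not} forbid cycles, only cycles consisting entirely of orbifold points. The ``in particular'' is there to rule out components of $\Gamma$ that are closed one-dimensional strata with no vertices at all, not to show $\Gamma$ is acyclic. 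You seem to have conflated ``graph'' (in the paper's sense of a one-complex with vertices and edges) with ``forest.'' Consequently your Step 2 case analysis of tree shapes never gets off the ground. You also flag yourself that the $\pi_1$/Alexander-duality step does not work even heuristically against a theta graph with coprime weights. Finally, your argument invokes $\pi_1(M)=1$, but the lemma is stated without assuming $M$ simply connected (the paper's proof sidesteps this by passing to a universal cover of an auxiliary submanifold, not of $M$ itself).

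For comparison, the paper's argument is entirely local to the offending geodesic and uses no global topology of $\Gamma$ at all. Taking $p$ in the horizontal lift $c$ with $G_p$ cyclic, one looks at the totally geodesic fixed-point component $M'=M^{G_p}$ and the induced cohomogeneity-one action of $G'=(N(G_p)/G_p)_0$ on it. If $c$ avoided every non-orbifold point, all isotropy along $c$ would be finite, so the $G'$-action on $M'$ would have all orbits of the same dimension; lifting to the universal cover $M''$ of $M'$ and applying Lytchak's theorem \cite[Theorem 1.6]{Lyt10} on the existence of singular orbits for cohomogeneity-one actions on compact simply connected manifolds then yields the contradiction. You would do well to abandon the global graph-topology route and pursue this fixed-point-set reduction instead.
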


\begin{proof}
Suppose by contradiction that $c:\R\to M$ is a horizontal geodesic projecting to the singular set and only intersecting orbifold points of $M/G$. We can assume that $p:=c(0)$ projects via $\pi:M\to M/G$ to a point $p_*$ in a singular edge of $M/G$ (hence $G_p$ is cyclic).
Let $M'$ be the connected component of $M^{G_p}$ through $p$ and $G'=(N(G_p)/G_p)_0$. Then:
\begin{itemize}
\item $M'$ contains $c$, and $G'$ acts on $M'$ with $M'/G'$ contained in the singular set and containing $\pi\circ c$. In particular, $\dim M'/G'=1$ and thus $G'$ acts on $M'$ with cohomogeneity one.
Moreover, $G'_p=\{e\}$ and, in particular, the principal $G'$-orbits are diffeomorphic to $G'$.
\item Since $G$ is connected and $G_p$ is cyclic by Lemma \ref{L:codim2}, we can assume that $G_p$ is contained in the maximal torus $T$ of $G$, hence $T/G_p\subseteq N(G_p)/G_p$, and thus the principal $G'$-orbits have dimension $\geq 1$. Therefore, $\dim M'\geq 2$ 
and since $M'$ is totally geodesic in $M$, we have $\sec_{M'}>0$.
\item Since $\pi\circ c$ only meets orbifold points, it follows that $G_{c(t)}$ is finite for all $t$, and in particular, so is $G'_c(t)=(G_{c(t)}\cap N(G_p))/G_p$.
\end{itemize}
Summing up, $M'$ is a compact, positively curved manifold with a cohomogeneity-one $G'$-action ($G'$ connected), all of whose isotropy groups are finite and thus all orbits 
have the same dimension. However, this action lifts to a cohomogeneity-one action on the (compact) universal cover  $M''$ of $M'$ and by Lytchak \cite{Lyt10}, there must be singular orbits in $M''$. Since the orbits on $M''$ have the same dimension as the corresponding orbits as in $M'$, this implies the existence of singular points on $M'$ as well, contradicting the equidimensionality of orbits in $M'$.
\end{proof}

\subsection{Branched covers of Alexandrov spaces along singular cycles}\label{SS:branched cover}

One of the techniques that is applied to restrict the structure of the singular strata of the quotient space, is to pass to the $k$-fold branched cover of the quotient along a singular cycle.

\begin{definition}
Let $X$ be a metric space homeomorphic to $S^3$, and let $c$ be a simple closed curve in $X$. The \emph{$k$-fold branched cover of $X$, branched over $c$}, denoted $X_k(c)$, 
is a metric space obtained by the following steps:
\begin{enumerate}
\item Take the cover $b':\widetilde{X\setminus c}\to X\setminus c$ corresponding to the map
\[
\pi_1(X\setminus c)\to H_1(X\setminus c)\cong \Z\to \Z_k
\]
\item Define the length $\ell(\gamma)$ of a curve $\gamma:[0,1]\to \widetilde{X\setminus c}$ as the length of $b'\circ \gamma$, and endow $\widetilde{X\setminus c}$ with the metric induced by $\ell$.
\item Define $X_k(c)$ as the metric completion of $\widetilde{X\setminus c}$, and define $b_X:X_k(c)\to X$ as the unique continuous extension of $b'$.
\end{enumerate}
\end{definition}

The definition of the space $X_k(c)$ and its map $b_X$ coincide with the more topological definitions common in knot theory \cite{BZ03} but it also adds information about the metric space structure.
We now define branched covers of graphs:

\begin{definition}
Fix a natural number $k>1$. Given a graph $\Gamma$ with weighted edges, and a simple cycle $c$ in $\Gamma$ made of edges with weight $\geq k$, define the \emph{$k$-fold cover of $\Gamma$ branched along $c$}, written ${\Gamma}_k(c)$, as the graph constructed as follows:
\begin{enumerate}
\item Letting $V(\Gamma)=V_1\cup V_2$ be the set of vertices of $\Gamma$, where $V_1$ denotes the vertices contained in $c$ and $V_2$ its complement, 
then $V({\Gamma}_k(c)):= V_1\cup (V_2\times \{1,\ldots k\})$
\item For any edge $(u,v)$ of $\Gamma$ with weight $w$, add the following edges to ${\Gamma}_k(c)$:
\begin{itemize}
\item  {If $(u,v)$ is an edge of $c$} with weight $w>k$, add $(u,v)$ to ${\Gamma}_k(c)$ as well, with weight $w/k$.
\item If $u\in V_1$, $v\in V_2$, add $(u, (v,i))$ to ${\Gamma}_k(c)$ for $i=1,\ldots k$,  {with weight $w$}.
\item If $u, v\in V_2$, add $((u,i), (v,i))$ to ${\Gamma}_k(c)$ for $i=1,\ldots k$,  {with weight $w$}.
\end{itemize}
\end{enumerate}
We denote by $b_{\Gamma}:\Gamma_k(c)\to \Gamma$ the branched covering of $\Gamma$, uniquely determined by its action on the vertices of $\Gamma_k(c)$ as:
\begin{itemize}
\item  $b_{\Gamma}|_{V_1}=id_{V_1}$,
\item $b_{\Gamma}|_{V_2\times \{1,\ldots k\}}$ is the projection onto the first factor.
\end{itemize}
\end{definition}

\begin{remark}
Given a $k$-fold branched cover $b_X:X_k(c)\to X$ (resp. $b_{\Gamma}:\Gamma_k(c)\to \Gamma$) we will abuse notation and identify the curve $c$ (resp. the cycle $c$) with its preimage $b_X^{-1}(c)$ (resp. $b_{\Gamma}^{-1}(c)$).
\end{remark}

\begin{proposition}\label{P:double-branched}
Suppose that $X=M/G$ is an Alexandrov space with $\curv>0$ homeomorphic to $S^3$ and with singular set a graph $\Gamma$. Let $c$ be a simple cycle in $\Gamma$ made of a union of edges, 
and assume that all the edges of $c$ have weight $\geq k$ for some $k$. Then:
\begin{enumerate}
\item the $k$-fold branched cover $b_X:X_k(c)\to X$ is still an Alexandrov space with $\curv>0$.
\item The preimages of non-orbifold points of $\Gamma$ are small points of $X_k(c)$, and the preimage of an edge in $c$ weighted by $w$ consists of points whose space of direction 
is isometric to the spherical suspension of a circle of length $2\pi\over (w/k)$.
\end{enumerate}
\end{proposition}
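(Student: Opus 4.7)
The plan is to argue locally. Since $b_X$ is a local isometry off the branch locus $b_X^{-1}(c)$, it suffices to verify the $\curv\geq 0$ comparison at points of $X_k(c)$ lying over $c$, and to identify their spaces of directions. Accordingly, I split the analysis by whether the base point of $c$ lies in the interior of an edge or at a vertex.

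First I will treat a preimage $\tilde p$ of a point $p$ in the interior of an edge of $c$ of weight $w$. By definition of the labeled graph, $\Sigma_p$ is the spherical suspension of a circle of length $2\pi/w$, so $T_pX$ splits isometrically as $\R\times C_{2\pi/w}$, where $C_\theta$ denotes the two-dimensional Euclidean cone of total angle $\theta$. The branched cover unfolds the cone factor by a factor of $k$: the space of directions $\Sigma_{\tilde p}$ becomes the spherical suspension of a circle of length $2\pi k/w=2\pi/(w/k)$, which yields the second assertion of (2) immediately, and $T_{\tilde p}X_k(c)\cong\R\times C_{2\pi k/w}$ has $\curv\geq 0$ because the hypothesis $w\geq k$ forces $2\pi k/w\leq 2\pi$. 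The Alexandrov comparison in a neighborhood of $\tilde p$ can then be checked by decomposing small geodesic triangles along $b_X^{-1}(c)$ into pieces that project isometrically to $X\setminus c$, where the comparison is already known.

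The heart of the argument occurs at a preimage $\tilde v$ of a vertex $v$ of $c$. Here the tangent cone $T_{\tilde v}X_k(c)$ is the Euclidean cone over $\widetilde\Sigma_v$, the $k$-fold branched cover of $\Sigma_v$ at the two points corresponding to the $c$-edges incident to $v$. Since $\Sigma_v$ is a two-dimensional Alexandrov space with $\curv\geq 1$ and these branch points carry cone angles $2\pi/w_i\leq 2\pi/k$ (as the weights $w_i$ satisfy $w_i\geq k$), the unfolded cone angles in $\widetilde\Sigma_v$ are at most $2\pi$. The main obstacle of the proof is then to verify that $\widetilde\Sigma_v$ still satisfies $\curv\geq 1$. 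I plan to establish this via a Toponogov patching argument: any geodesic triangle in $\widetilde\Sigma_v$ can be assembled from at most $k$ subtriangles, each projecting isometrically to a triangle in $\Sigma_v$ with matching angles at the branch points, so the spherical comparison inequality in $\Sigma_v$ lifts through this decomposition. The cone $C(\widetilde\Sigma_v)$ therefore has $\curv\geq 0$, and the local Alexandrov comparison near $\tilde v$ follows as in the previous case.

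For the remaining half of part (2), preimages of non-orbifold points $p$ of $\Gamma$ split into two subcases. If $p\notin c$, each of the $k$ preimages in $X_k(c)$ has space of directions isometric to $\Sigma_p$ and hence inherits the small-point property of $p$. If $p\in c$, the unique preimage $\tilde p$ has space of directions $\widetilde\Sigma_p$, and I will use Lemma \ref{L:codim2} to describe $\Sigma_p$ explicitly as a quotient of $S^3$ in which the two branch points appear as the singular $S^1$-orbits; the $k$-fold branched cover at these two points is then isometric to a related quotient of $S^3$, from which the bound on the sum of three pairwise distances by $\pi$ characterizing small points is preserved under the identification. This will complete the verification that $\tilde p$ is a small point of $X_k(c)$.
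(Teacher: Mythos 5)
Your local strategy correctly identifies the spaces of directions at points over the interior of an edge (and the formula $2\pi k/w$ matches the paper), but there are two genuine gaps in the route you take to the Alexandrov comparison, and both are precisely the places where the paper's argument takes a fundamentally different shape.

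The first gap is at the vertices of $c$. You reduce the problem to showing that the $k$-fold branched cover $\widetilde\Sigma_v$ of the space of directions still has $\curv\geq 1$, and propose a ``Toponogov patching argument,'' assembling a geodesic triangle of $\widetilde\Sigma_v$ from at most $k$ subtriangles that project isometrically to $\Sigma_v$. This is not a minor detail to be filled in later: the comparison inequality does not add across a subdivision of a triangle, because the lengths of the interior cut segments enter both triangles with opposite effects and there is no guarantee that the pieces are themselves triangles with three geodesic sides in $\Sigma_v$. Moreover, even granting $\curv(\widetilde\Sigma_v)\geq 1$, non-negativity of the tangent cone $T_{\tilde v}X_k(c)$ is a \emph{necessary} condition for the local Alexandrov comparison but not a sufficient one, so a separate argument would still be needed to deduce the comparison for triangles near $\tilde v$. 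The paper avoids both problems entirely: it never proves $\curv\geq 1$ for $\widetilde\Sigma_v$ directly. Instead, setting $\tilde{c}$ to be the union of $c$-edges with weight $>k$, it observes that $X_k(c)\setminus b_X^{-1}(\tilde{c})$ is already an Alexandrov space with $\curv>0$ (a local isometry away from the branch set, and a genuine manifold over edges of weight exactly $k$), and then shows that $b_X^{-1}(\tilde{c})$ cannot be crossed by any minimizing geodesic --- the first-variation formula would force two directions at distance $\pi$ inside a spherical suspension of a circle of length $<2\pi$, which is impossible. Convexity plus the fact that $b_X^{-1}(\tilde{c})$ has empty interior means every geodesic triangle of $X_k(c)$ is a limit of triangles in the good open set, and the comparison passes to the limit. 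This is a global-in-$X_k(c)$ argument, not a pointwise check, and it is what makes the proof of (1) work; your proposal does not contain a substitute for it.

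The second gap is in the first half of part (2). For a non-orbifold $p\in c$, you suggest realizing $\Sigma_p$ explicitly as a quotient of $S^3$ via Lemma \ref{L:codim2} and identifying $\widetilde\Sigma_p$ with another quotient of $S^3$. But $\Sigma_p$ will typically have a third singular point (when $p$ has valency three) that is not a branch point, and it is not clear that the $k$-fold cover branched at only two of the three singular $S^1$-orbits is again a group quotient of $S^3$; you would need an index-$k$ subgroup $K'\leq K$ with very specific fixed-point data, and this does not always exist. The paper sidesteps this entirely with a domination argument: by Hsiang--Kleiner, a non-orbifold $\Sigma_p$ is dominated by $S^2(4)$ (indeed by $S^2(4)/\Z_k$, since its spaces of directions at $c^\pm$ are circles of length $\leq 2\pi/k$), so $\widetilde\Sigma_p$ is dominated by $S^2(4)$, hence a small point. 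If you want a self-contained argument you should replace the explicit $S^3$-quotient bookkeeping with this soft domination argument, which is both shorter and correct in all cases.
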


\begin{proof}
The proof is the same as Lemma 2.3 of \cite{GW14} (only dealing with $k$-fold  instead of two-fold branched covers), but we add it here with some details for the reader's benefit.

All results are clearly true on $X_k(c)\setminus c$, because the branched cover is a local isometry. Furthermore, notice that around the preimage of edges of $c$ with multiplicity $k$ in $X$, 
the space $X_k(c)$ is a manifold with $\sec>0$ and in particular $X_k(c)\setminus b_X^{-1}(\tilde{c})$ is an Alexandrov space with $\curv>0$, where $\tilde{c}$ is the union of edges 
with weights $>k$ in $X$.

Let $p=c(t)$ be a point of $c\subset X$ and let $\bar p=b_X^{-1}(p)$ be the corresponding point of $c\subset X_k(c)$. Since any geodesic from $p$ into $X\setminus c$ has $k$ 
preimages in $X_k(c)$, and the angle between nearby such geodesics is the same as the angle between the corresponding preimages, 
we have that $d_{\bar{p}}b_X:\Sigma_{\bar{p}}\setminus \{c^+(t),c^-(t)\}\to \Sigma_p\setminus\{c^+(t),c^-(t)\}$ is a $k$-fold cover and a local isometry. 
Therefore, $d_{\bar{p}}b_X$ extends to a $k$-fold branched cover $d_{\bar{p}}b_X:\Sigma_{\bar{p}}\to \Sigma_p$, branched over the points $\{c^+(t), c^-(t)\}$. In particular, if $p\in X$ belongs to an edge of $c$ 
with weight $w\in \mathbb{N}$, we have that $\Sigma_p=S^2/\Z_w$ is isometric to the spherical suspension of $S^1$ with diameter $2\pi/w$, hence $\Sigma_{\bar p}$ is isometric to the spherical suspension of $S^1$ with diameter $2k\pi/w=2\pi/(w/k)$. This proves the second statement of (2) but also shows that $X_k(c)\setminus b_{X}^{-1}(\tilde{c})$ is convex. 
In fact, if a minimizing geodesic $\gamma\neq c$ in $X_k(c)$ crosses $b_{X}^{-1}(\tilde{c})$ at a point $\gamma(0)=c(t_0)$, then the first variation formula for the length function implies that the points 
$\gamma(0)^+,\gamma^-(0) \in \Sigma_{c(t_0)}\setminus \{c^+(t_0), c^-(t_0)\}$ would have to lie at distance $\pi$ from each other, contradicting the fact that $\Sigma_{c(t_0)}$ is the spherical suspension 
of a circle of length $<2\pi$. Since the complement of $b_{X}^{-1}(\tilde{c})$ is convex and $b_{X}^{-1}(\tilde{c})$ has empty interior, every geodesic triangle in $X_k(c)$ is a limit 
of geodesics triangles in (the Alexandrov space) $X_k(c)\setminus b_X^{-1}(\tilde{c})$ and thus $X_k(c)$ is itself an Alexandrov space with $\curv>0$, proving (1).

The first statement of (2) stems from the fact that if for some $p=c(t)$ the space of direction $\Sigma_p$ is not an orbifold point, then by Hsiang and Kleiner \cite{HK89} it is \emph{dominated by} 
$S^2(4)$, that is, there is a length non-increasing map $S^2(4)\to \Sigma_p$ (in fact, since the spaces of directions of $\Sigma_p$ at $c^\pm(t)\in \Sigma_p$ is a circle of diameter $\leq {2\pi/k}$, 
it follows that $\Sigma_p$ is dominated by $S^2(4)/\Z_k$). Therefore, $\Sigma_{\bar{p}}$ is dominated by $S^2(4)$, and in particular, it is a small point.
%
%Finally, we have that since a singular point $\bar{p}\in X_k(c)$ either projects to $X\setminus c$ (in which case $b_X^{-1}(b_X(\bar{p}))$ has $k$ preimages) projects to $c$ (in which case $\bar{p}\in c\subset X_k(c)$ and (in which case $b_X^{-1}(b_X(\bar{p}))=\bar{p}$) it follows that the singular set of $X_k(c)$ is $\Gamma_k(c)$.
\end{proof}

\begin{corollary}
Let $X=M/G$ be a quotient of a closed, simply connected, positively curved Riemannian manifold $M$, with $\partial X=\emptyset$ and $\dim X=3$. Then:
\begin{enumerate}
\item The singular set $\Gamma$ does not contain two disjoint cycles.
\item If $c$ is a simple cycle in $\Gamma$ made of a union of edges with weight $\geq k$ for some $k$, the singular set of $X_k(c)$ is isomorphic, as a weighted graph, to $\Gamma_k(c)$.
\end{enumerate}
\end{corollary}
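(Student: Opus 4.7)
My plan is to prove the two parts in order: (2) first, which is a direct unpacking of definitions together with Proposition \ref{P:double-branched}, then (1), which is an iterated branched-cover argument bounded by the Hsiang-Kleiner estimate of at most three small points.

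For (2), the branched cover $b_X\colon X_k(c)\to X$ is a $k$-fold Riemannian cover and a local isometry away from $b_X^{-1}(c)$. Hence the singular stratification of $X_k(c)\setminus b_X^{-1}(c)$ is the $k$-fold lift of the singular stratification of $X\setminus c$: every vertex of $\Gamma$ not on $c$ has $k$ preimages (matching $V_2\times\{1,\ldots,k\}$ in Definition \ref{D:lab-graph}), and every non-$c$ edge of $\Gamma$ lifts to $k$ parallel copies with the same weight. Along $c$, each vertex in $V_1$ has a unique preimage, and by Proposition \ref{P:double-branched}(2) the edge of $c$ with weight $w$ lifts to points whose space of directions is the spherical suspension of a circle of length $2\pi/(w/k)$. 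Consequently the edge becomes regular if $w=k$ (so it disappears from the singular set) and remains with weight $w/k$ if $w>k$, matching the construction of $\Gamma_k(c)$ exactly.

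For (1), suppose for contradiction that $\Gamma$ contains two disjoint simple cycles $c_1, c_2$. All edges of $\Gamma$ have weight $\geq 2$, so Proposition \ref{P:double-branched} produces a positively curved $3$-dimensional Alexandrov space $Y := X_2(c_1)$ whose singular set is $\Gamma_2(c_1)$ by (2), and in which preimages of non-orbifold points of $X$ are small. Let $n_1, n_2, n_3$ count the non-orbifold points of $X$ lying on $c_1$, $c_2$, and elsewhere. By Lemma \ref{L:no-only-orb}, $n_1, n_2\geq 1$. Since points off $c_1$ have two preimages and those on $c_1$ only one, $Y$ has at least $n_1 + 2n_2 + 2n_3$ small points; the Hsiang-Kleiner bound then forces $n_1 = n_2 = 1$ and $n_3 = 0$.

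Iterate: the preimage $c_2'$ of $c_2$ in $Y$ is disjoint from $c_1$ and, depending on the parity of the linking number of $c_2$ with $c_1$ in $S^3$, consists either of two disjoint copies of $c_2$ (Case A: even linking) or of a single simple cycle double-covering $c_2$ (Case B: odd linking). Either way, the edges of $c_2'$ inherit weights $\geq 2$ from $c_2$, so Proposition \ref{P:double-branched} applies again to produce the positively curved Alexandrov space $Z := Y_2(c_2')$. In Case A, the three non-orbifold points of $Y$ (one on each of $c_1, c_2', c_2''$) contribute $2+1+2=5$ small points to $Z$; in Case B, the three non-orbifold points of $Y$ (one on $c_1$ and two on $c_2'$) contribute $2+1+1=4$. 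Both violate the Hsiang-Kleiner bound of three small points on $Z$, and the contradiction completes the proof. The main technical obstacle is that Proposition \ref{P:double-branched} is stated for $X=M/G$ but is applied here to $Y$, which is not of this form; one must verify that its proof only uses that $X$ is a positively curved $3$-dimensional Alexandrov space with graph-like singular set, so that it extends verbatim to $Y$.
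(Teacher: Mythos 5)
Your part (1) matches the paper's argument: both iterate the branched cover twice, use Lemma \ref{L:no-only-orb} to find non-orbifold points on each cycle, and invoke the Hsiang--Kleiner bound of three small points. Your explicit split into cases A/B by the parity of the linking number is slightly more careful than the paper, which simply picks one component $c_2'$ of $b_X^{-1}(c_2)$ and observes that the final space has at least four small points; either version works. You are also right to flag that Proposition \ref{P:double-branched} is stated for $X=M/G$ but is then applied to $X_2(c_1)$, which is not of this form --- this is a real technical point, which the paper also treats implicitly, and your comment that the proof only uses that the space is a positively curved $3$-dimensional Alexandrov space with graph-like singular set is the right resolution.

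The gap is in part (2), and it is a logical-order problem. You assert that ``the singular stratification of $X_k(c)\setminus b_X^{-1}(c)$ is the $k$-fold lift of the singular stratification of $X\setminus c$: every vertex of $\Gamma$ not on $c$ has $k$ preimages\ldots and every non-$c$ edge of $\Gamma$ lifts to $k$ parallel copies.'' This is not automatic. The map $b_X$ restricted to $X_k(c)\setminus b_X^{-1}(c)$ is a $k$-fold covering of $X\setminus c$, so each point and each edge individually has $k$ preimages, but whether the preimage of the \emph{whole} subgraph $\Gamma\setminus c$ is the trivial cover $(\Gamma\setminus c)\times\{1,\ldots,k\}$ --- which is what the definition of $\Gamma_k(c)$ demands --- depends on whether the composite $\pi_1(\Gamma\setminus c)\to\pi_1(X\setminus c)\to\Z_k$ is trivial. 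If $\Gamma\setminus c$ contained a cycle linked with $c$ with linking number nonzero mod $k$, its preimage would be a single connected $k$-fold cover of that cycle, not $k$ disjoint copies, and the resulting graph would not be isomorphic to $\Gamma_k(c)$. The paper avoids this precisely by proving (1) first: since $\Gamma$ cannot contain two disjoint cycles and $c$ is a cycle, $\Gamma\setminus c$ is a forest, hence simply connected on each component, hence lifts trivially. Your proposal proves (2) before (1) and never addresses why $\Gamma\setminus c$ must lift trivially, so as written the argument for (2) does not close. Reversing the order and inserting the forest observation repairs it.
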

\begin{proof}
(1) Suppose $c_1$, $c_2$ are disjoint cycles in $\Gamma$. By Lemma \ref{L:no-only-orb}, each of them contains at least one non-orbifold point $p_i\in c_i$. 
By Proposition \ref{P:double-branched}, the double branched cover $X'=X_2(c_1)$ is a positively curved Alexandrov space with at least one singular cycle $c_2'\in b_X^{-1}(c_2)$ and three small points 
$p_1'\in c_1'= b_X^{-1}(c_1)$, $p_2', p_2''\in b_{X}^{-1}(p_2)$. Furthermore, the spaces of directions along $c_2'$ are the same as those along $c_2$, therefore taking the double branched cover along $c_2'$ produces an Alexandrov space with $\curv>0$ and at least four small points, contradicting Hsiang-Kleiner.

(2) By the previous point, no two cycles $c_1, c_2$ of $\Gamma$ are linked (that is, $c_1$ is nullhomotopic in $X\setminus c_2$ and vice versa), and therefore $\Gamma\setminus c$ is contained in a simply connected subset of $X\setminus c$. 
In particular, the preimage of $\Gamma\setminus c$ under the map $b_X:X_k(c)\to X$ is $(\Gamma\setminus c)\times \{1,\ldots k\}$ with the edges preserving the same weight as their image in $X$. On the other hand, the singular points in $c\subset X_k(c)$ are the preimages of those edges of $c\subset X$ with weight $>k$ (cf. the proof of Proposition \ref{P:double-branched}), 
and for each edge $e$ in $\Gamma$ connecting a vertex in $c$ with a vertex in $\Gamma\setminus c$, $b_X^{-1}(e)$ consists of exactly $k$ singular edges. 
By definition, this shows that the singular set of $X_k(c)$ is isomorphic to $\Gamma_k(c)$.
\end{proof}

\part{Techniques for identifying rational ellipticity}\label{P:techniques}

Given a closed, simply connected, positively curved Riemannian manifold $M$ with a cohomogeneity-three action by a group $G$, we apply two different strategies to prove that $M$ 
is rationally elliptic. In some cases, it is possible to compute the cohomology ring of $M$ and to conclude that $M$ is rationally homotopy equivalent to a rationally elliptic space.
In some other cases, $M$ can be decomposed as a double disk bundle over rationally $\Omega$-elliptic spaces. In this section, we discuss the details of these techniques.

\section{Rational homotopy theory}\label{S:homotopy}

\subsection{Restricting possible dimensions and groups in special cases}

Recall that if $M$ is a $G$-manifold with $\sec_M>0$ and $\partial(M/G)=\emptyset$, then the principal isotropy is trivial (cf. \cite[Lemma 5.1]{Wil04}), and by Lemma \ref{L:codim2} every isotropy group $G_p$ has $\rk G_p\leq 1$.
 
\begin{lemma}\label{L:rank2}
Suppose $M$ has $\sec_M>0$ and $G$ acts isometrically on $M$ with cohomogeneity three and $\partial (M/G)=\emptyset$. Then $\rk G\leq 2$.
\end{lemma}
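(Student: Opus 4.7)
Assume for contradiction that $\rk G \geq 3$, and fix a $3$-subtorus $T \subseteq G$. By Lemma~\ref{L:codim2} every isotropy $G_p$ satisfies $\rk G_p \leq 1$, so a point of $M^{T^2}$ for any $2$-subtorus $T^2 \subseteq T$ would satisfy $T^2 \subseteq G_p$ and force $\rk G_p \geq 2$. Thus $M^{T^2} = \emptyset$ for every $T^2 \subseteq T$, and the plan is to produce a $T^2$-fixed point nonetheless, via Berger's fixed-point theorem, for a contradiction.

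Rank and dimension of a compact Lie group share parity, so $\dim M = \dim G + 3$ is even whenever $\rk G$ is odd. The case $\rk G = 3$ therefore lies in even dimension, where I would iterate Berger's theorem: for $S^1_1 \subseteq T$, the fixed set $M^{S^1_1}$ is non-empty, closed, totally geodesic, and of even codimension (its normal bundle is a complex $S^1$-representation), hence still even-dimensional and positively curved; Berger applied to a complementary circle $S^1_2 \subseteq T$ acting on $M^{S^1_1}$ then yields a point of $(M^{S^1_1})^{S^1_2} \subseteq M^{S^1_1 \cdot S^1_2}$, the desired $T^2$-fixed point.

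For $\rk G \geq 4$, $\dim M$ is odd and Berger does not apply to $M$ directly; instead I would pass to a positively curved orbifold of one lower dimension. Given any $S^1 \subseteq T$: either $M^{S^1} \neq \emptyset$, in which case I would restrict to this totally geodesic, odd-dimensional, positively curved submanifold, on which $T/S^1 \cong T^2$ acts with only finite isotropies (because each such point already has $S^1$ inside its $G$-isotropy of rank $\leq 1$), and then proceed to the orbifold step; or $S^1$ acts almost-freely on $M$, so that $M/S^1$ is an even-dimensional positively curved orbifold carrying an isometric $T/S^1 \cong T^2$-action. Applying the Berger-type fixed-point theorem for positively curved orbifolds, iterated as in the previous paragraph, produces an orbifold point fixed by $T^2$, which lifts to $p \in M$ with $T \cdot p \subseteq S^1 \cdot p$ of dimension at most $1$; hence $\rk G_p \geq 2$, again contradicting Lemma~\ref{L:codim2}.

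The main obstacle is the odd-dimensional case $\rk G \geq 4$: Berger's theorem fails on $M$ itself, and one must invoke its orbifold generalization after quotienting by an almost-free circle. Verifying that the rank-$\leq 1$ isotropy bound descends to the orbifold quotient and tracking dimension parities along the chain of circle quotients and restrictions to fixed submanifolds is the key technical point.
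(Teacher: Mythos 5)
Your proof reaches the right conclusion but takes a genuinely different and heavier route than the paper's. The paper argues in two steps: since $T$ is abelian, the compact manifold $M$ has only finitely many $T$-isotropy groups $T_p=G_p\cap T$, each of dimension at most one by Lemma~\ref{L:codim2}; for $\dim T\geq 3$ one can therefore pick a $2$-subtorus $T'\subseteq T$ transverse to all of them, so that $T'$ acts almost freely, which is impossible on a closed positively curved manifold \cite[Theorem 8.3.5]{Pet16}. You instead construct a point with positive-dimensional $T^2$-isotropy directly by iterating Berger's theorem, in effect reproving a version of the cited fixed-point result. That is more self-contained in spirit but requires the Berger zero theorem for positively curved Riemannian \emph{orbifolds}, which you invoke without reference; it is true, but it is an extra ingredient that the paper's transversality trick avoids altogether. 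One small correction: the case split should be governed by the parity of $\rk G$ (equivalently of $\dim M=\dim G+3$), not by $\rk G=3$ versus $\rk G\geq 4$ — for $\rk G=5,7,\ldots$ the manifold is again even-dimensional and your direct Berger iteration already applies, while the orbifold detour is needed precisely when $\rk G$ is even. These are fixable slips rather than gaps: the approach works, it is just more work than the paper's.
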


\begin{proof}
Let $T\subset G$ be a maximal torus and assume $\dim T\geq 3$. Then for every $p\in M$, $T_p=G_p\cap T$ has dimension $\leq 1$ since $\rk G_p\leq 1$. 
Let $T'\subseteq T$ denote a 2-torus transverse to all of the (finitely many) $T_p$'s. Then $T'$ acts on $M$ almost freely, which is not possible since $M$ has positive curvature 
(cf.  \cite[Theorem 8.3.5]{Pet16}).
\end{proof}

\begin{proposition}\label{P:groups}
Suppose that $M$ is a cohomogeneity-three $G$-manifold such that $\partial(M/G)=\emptyset$. Assume furthermore that $M/G$ has an edge with multiplicity two.
%ending at a non-orbifold point  $q_*$ (hence with isotropy group $G_q$ having identity component $S^1$).
Then  {up to a finite cover, $G$ is a product of at most two rank-one groups.}
%one of $S^1$, $\mathrm{SO}(3)$, $T^2$, $S^1\times\mathrm{SO}(3)$, 
%$S^1\times_{\Z_2}\mathrm{SU}(2)=:\mathrm{U}(2)$, $\mathrm{SO}(3)\times\mathrm{SO}(3)$, $\mathrm{SO}(3)\times\mathrm{SU}(2)$, 
%or $\mathrm{SU}(2)\times_{\Z_2}\mathrm{SU}(2)=:\mathrm{SO}(4)$.
\end{proposition}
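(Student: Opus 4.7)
My plan is to use Lemma~\ref{L:rank2} together with an analysis of the involution coming from the weight-two edge. By Lemma~\ref{L:rank2}, $\rk G\le 2$. If $\rk G\le 1$, then $G_0$ is trivial, $S^1$, or a cover of $SO(3)$, and the conclusion is immediate. When $\rk G=2$, the identity component $G_0$ is, up to finite cover, one of $T^2$, $S^1\times SU(2)$, $SU(2)\times SU(2)$, $SU(3)$, $Sp(2)$, or $G_2$. The first three are already products of two rank-one Lie groups, so the task reduces to excluding $G_0\in\{SU(3), Sp(2), G_2\}$.

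The weight-two edge, via Lemma~\ref{L:codim2} (case (1.a) with $p=2$), furnishes a point $q\in M$ with isotropy $G_q=\langle\sigma\rangle\cong\Z_2$ whose slice representation on $\nu_qL_q\cong\R^3$ is rotation by $\pi$ in a plane. Assume for now that $\sigma\in G_0$ (the case $\sigma\notin G_0$ can be handled analogously by passing to the finite cover $M/G_0\to M/G$ and examining the induced $\pi_0(G)$-action). I would then study the connected component $N\subseteq M^\sigma$ through $q$, which is totally geodesic and hence positively curved. Writing $T_qM=\g\oplus\nu_qL_q$ and using that the $(+1)$-eigenspace of $\mathrm{Ad}_\sigma$ on $\g$ is $\mathfrak{c}_\g(\sigma)=\mathrm{Lie}(C_{G_0}(\sigma))$, a direct count gives $\dim N=\dim C_{G_0}(\sigma)+1$. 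Since $\sigma$ is central in $C_{G_0}(\sigma)$, the connected centralizer $C:=C_{G_0}(\sigma)_0$ acts on $N$ with orbits of codimension one, so this is a cohomogeneity-one action of a rank-two compact Lie group on a positively curved manifold.

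For each simple rank-two candidate, $C$ is explicit: $U(2)$ for $SU(3)$; either $Sp(1)\times Sp(1)$ (non-central $\sigma$) or the whole $Sp(2)$ (central $\sigma=-I$) for $Sp(2)$; and $SO(4)$ for $G_2$. To rule out each case I would combine three ingredients: (i) the Grove--Verdiani--Ziller classification of cohomogeneity-one positively curved manifolds (after lifting to the universal cover of $N$); (ii) Wilking's connectedness theorem applied to the totally geodesic inclusion $N\hookrightarrow M$, which for groups of large dimension forces $N$ to inherit much of the rational homotopy of $M$; and (iii) compatibility of the slice representation of $C$ at $q$ with that of $G$, in particular the identification of the $(-1)$-eigenspace of $\sigma$ inside $\nu_qL_q$ with a piece of $\nu_q(C\cdot q)\subseteq T_qN$.

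The main obstacle is this last case analysis, since each simple group requires separate bookkeeping of its involutions, their centralizers, and the induced slice data. The central-involution case $\sigma=-I\in Sp(2)$ is the most delicate: there $M^\sigma$ has codimension only two in $M$ and $C_{G_0}(\sigma)=G_0$, so the whole of $G$ itself acts on $N$ with cohomogeneity one; the contradiction must then be extracted from finer topological constraints (for instance, from Wilking's connectedness forcing $N$ and $M$ to agree in low-dimensional rational homotopy, combined with the incompatibility between such a high-connectedness codimension-two inclusion and the presence of the prescribed weight-two edge in the quotient).
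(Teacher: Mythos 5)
Your starting point is the same as the paper's: take the fixed-point component $N$ of the slice involution $\sigma$ through a point on the weight-two edge, note that $N$ is totally geodesic (hence positively curved) and that the normalizer/centralizer of $\sigma$ acts on it with cohomogeneity one. Your dimension count $\dim N = \dim C_{G_0}(\sigma)+1$ and your list of centralizers for $SU(3)$, $Sp(2)$, $G_2$ are correct. But after that the argument is not finished, and the route you propose is both much heavier than necessary and problematic in its own right.

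The gap is in how you plan to extract a contradiction from the cohomogeneity-one action. You want to invoke the Grove--Verdiani--Ziller classification plus Wilking's connectedness theorem. Neither is really usable here. ``Merely'' having a rank-two group act with cohomogeneity one on a positively curved manifold is not contradictory: for instance $SO(4)\cong Sp(1)\cdot Sp(1)$ acts with cohomogeneity one on the positively curved Berger space $B^7$, so your case $C=Sp(1)\times Sp(1)$ cannot be ruled out by positivity of curvature alone. Wilking's connectedness does not help either, because it compares $\pi_i(N)$ with $\pi_i(M)$, and the rational homotopy of $M$ is precisely the unknown we are trying to control; it gives no \emph{a priori} constraint. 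You yourself flag the central case $\sigma=-I\in Sp(2)$ as ``the most delicate'' and leave it open. This is not a finishable sketch as written.

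What you are missing is that one should analyze the \emph{group diagram} of the induced cohomogeneity-one action, not just the group acting. By Lemma~\ref{L:no-only-orb} the weight-two edge hits a non-orbifold vertex $q_*$, and by Lemma~\ref{L:codim2} the isotropy there has identity component $S^1$; since $G_p=\mathbb{Z}_2\subset G_q$, the induced singular isotropy in the cohomogeneity-one action is a circle. Lifting to the universal cover $N''$ and using Lytchak \cite[Theorem 1.6]{Lyt10} to guarantee singular orbits, the group diagram is $H''=\{e\}$, $K_\pm''=S^1$. Grove--Wilking--Ziller \cite[Lemma 3.5]{GWZ08} then forces the acting group to be $S^1$ or $T^2$ --- in particular abelian. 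Thus $N(G_p)$ is abelian, and the contradiction with $\mathfrak{g}$ irreducible of rank two reduces to the purely Lie-theoretic check that for each $\Z_2\subset SU(3)$, $SO(5)$, $Sp(2)$, $G_2$ the normalizer (equivalently the centralizer) is non-abelian. No classification of cohomogeneity-one positively curved manifolds and no connectedness theorem are needed, and the central case $\sigma=-I$ is handled uniformly since one only needs that $C_{Sp(2)}(-I)=Sp(2)$ is non-abelian.
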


\begin{proof}
By Lemma \ref{L:no-only-orb}, such an edge must intersect a non-orbifold vertex $q_*$. Let $p$ be a point projecting to the edge with multiplicity two, let $\gamma(t)$ be a horizontal geodesic 
starting at $p$ and projecting to the edge, and let $q=\gamma(1)$ be a singular point projecting to the non-orbifold point $q_*$. Then $G_p=\Z_2$, and $(G_q)_0=S^1$.

Define $M'=$ the connected component of $M^{G_p}$ through $p$, and $G'=(N(G_p)/G_p)_0$ as in the proof of Lemma \ref {L:no-only-orb}. From that proof, we know that $M'$ is totally geodesic with $\dim M'>1$ and $\sec_{M'}>0$, $G'$ acts on $M'$ with cohomogeneity one, and the map $M'/G'\to M/G$ induced by the inclusion $M'\to M$ has image given by a union of singular edges in $M/G$.

Let now $M''$ be the (compact) universal cover of $M'$, and $G''$ the connected group lifted from $G'$, acting effectively on $M''$ (see \cite[Theorem 9.1 of Chapter I]{Br72}). Notice that the covering $\pi:M''\to M'$ comes with a homomorphism $\rho:G''\to G'$, such that $\pi$ is $\rho$-equivariant. We claim that for every $z''\in M''$ sent to $z'\in M'$, 
the map $\rho|_{G''_{z''}}:G''_{z''}\to G'_{z'}$ is:
\begin{itemize}
\item Injective. In fact, since $\pi$ induces an isometry $B_{\epsilon}(z'')\to B_{\epsilon}(z')$, if $g'',h''\in G''_{z''}$ satisfy $\rho(g'')=\rho(h'')$, then $g''(h'')^{-1}$ acts trivially on $B_{\epsilon}(z'')$ 
and therefore it belongs to the ineffective kernel, which is trivial, implying $g''=h''$.
\item An isomorphism at the level of Lie algebras: this is simply because $G''$ is a cover of $G'$.
\end{itemize}

%Furthermore, $G''$ acts on $M''$ (simply connected and positively-curved) with cohomogeneity 1, trivial principal isotropy and singular isotropy groups of dimension $\leq 1$. By the classification of \cite{GWZ08}, it follows that $G''_0$ is either $S^1$ or $T^2$, and thus the same holds for $G'$.
%
It follows in particular that in our case $H''=\{e\}$, $K_+''=S^1$, and $K_-''$ is either $S^1$ or finite. Furthermore, since $M''$ is simply connected and $G''$ is connected, 
it follows from e.g. Lytchak \cite[Theorem 1.6]{Lyt10} that $K''_\pm$ have positive dimension. Thus, $K''_\pm=S^1$ and Lemma 3.5 in Grove-Wilking-Ziller \cite{GWZ08} implies that $G''$ 
is either $S^1$ or $T^2$. In particular, $G''$ is abelian, and so is $G'$. On the other hand, $G'=(N(G_p)/G_p)_0$ where $G_p$ is a $\Z_2$ subgroup of $G$, 
and therefore $N(G_p)$ must be abelian as well.

Recalling from Lemma \ref{L:rank2} that $\rk G\leq 2$, in order to prove the Proposition it is enough to show that if $\g$ is irreducible of rank two,  then $N(\Z_2)$ is not abelian 
for any $\Z_2$ subgroup of $G$:
\begin{enumerate}
%\item If $\g=\mathfrak{su}(2)$ then $G=\mathrm{SU}(2)$ or $G=\mathrm{SO}(3)$. In the first case, a $\Z_2$ subgroup is generated by $-Id$ and thus $N(\Z_2)=\mathrm{SU}(2)$ is not abelian.
%\item If $\g=\R\oplus\mathfrak{su}(2)$ then $G=S^1\times\mathrm{SU}(2)$ or $G=S^1\times\mathrm{SO}(3)$. In the first case, a $\Z_2$ subgroup is generated by $-Id$ and thus $N(\Z_2)=\mathrm{SU}(2)$ 
%is not abelian.
%\item If $\g=\mathfrak{su}(2)\oplus\mathfrak{su}(2)$, then the case $G=\mathrm{SU}(2)\times\mathrm{SU}(2)$ cannot occur since $G$ is non-abelian and any $\Z_2$ subgroup 
%lies in the center of $G$, so that $N(\Z_2)/\Z_2=G/\Z_2$ is not abelian either.
\item If $\g=\mathfrak{su}(3)$ then $G=\mathrm{SU}(3)$ or $\mathrm{PSU}(3)=\mathrm{SU}(3)/\Z_3$. In the first case, a $\Z_2$ subgroup is generated by $\diag(-1,-1,1)$ 
(up to the action of the Weyl group), hence $N(\Z_p)\supseteq S(\mathrm{U}(2)\mathrm{U}(1))$. In the second case, a $\Z_2$ subgroup can be generated by $\diag(1,-1,-1)$ or 
$\diag(\xi,\xi,\xi^{-2})$ where $\xi$ is a sixth root of unity, and the normalizer  contains $S(U(2)U(1))/\Z_3$.
\item If $\g=\mathfrak{so}(5)$ then $G=\mathrm{SO}(5)$ or $\mathrm{Sp}(2)$. In the first case, a $\Z_2$ subgroup is generated by a block diagonal matrix of type $\diag(-I,I,1)$, $\diag(I,-I,1)$
or $\diag(-I,-I,1)$, with normalizer  either containing $\mathrm{SO}(3)$ (in the first two cases) or containing $\mathrm{SO}(4)$.
\item If $\g=\g_2$ then $G=G_2$, which contains $\mathrm{SU}(3)$ as a maximal group with the same torus. Therefore, up to conjugacy, the $\Z_2$ subgroups in $G_2$ 
are generated by the same elements in $\mathrm{SU}(3)$, and thus their normalizers in $G_2$ contain $S(\mathrm{U}(2)\mathrm{U}(1))$.
\end{enumerate}
It then follows that none of the options for rank two Lie groups must occur, and $G$ must be, up to a finite cover, one of $S^1$, $S^3$, $T^2$, $S^1\times S^3$ or $S^3\times S^3$.
\end{proof}

\subsection{Quotients with two non-orbifold points}\label{SS:two}

In this section, we assume that $M$ is a closed, simply connected, positively curved, cohomogeneity-three $G$-manifold with the following properties:

\begin{enumerate}
\item $X:=M/G$ is homeomorphic to $S^3$.
\item There are exactly two non-orbifold points $p_1,p_2\in X$.
\item There is a $1$-dimensional stratum of weight two.
% with at least one non-orbifold end.
\end{enumerate}

We want to prove that $M$ is rationally elliptic. By the last property and Proposition \ref{P:groups}, it follows that the group $G$ is rationally homotopic to  {$S^1$, $T^2$, 
$S^3$, $S^1\times S^3$, or $S^3\times S^3$. The first two cases have been already discussed by Hsiang-Kleiner \cite{HK89} and Galaz-Garcia-Searle \cite{GGS14}, 
so we will restrict to the groups $S^3$, $S^1\times S^3$, or $S^3\times S^3$}. Again using the fact that the principal isotropy group is trivial, it follows that the manifolds in this section and in Section \ref{SS:one} will have dimension 6, 7, or 9.

\begin{lemma}\label{L:MV}
Let $L_1$ and $L_2$ denote the two singular leaves corresponding to $p_1$ and $p_2$, respectively. If $E_i\to L_i$ denotes the normal sphere bundles, then:
\begin{enumerate}
\item The inclusions $E_i\to M':=M\setminus (L_1\cup L_2)$ induce isomorphisms in rational cohomology, and in particular, there is an isomorphism $\Phi:H^*(E_2)\cong H^*(E_1)$.
\item There is a Mayer-Vietoris sequence in rational cohomology
\[
\ldots \to H^i(M)\stackrel{\iota_1^*\oplus \iota_2^*}{\to} H^i(L_1)\oplus H^i(L_2)\stackrel{\pi_1^*- \Phi\circ\pi_2^*}{\to} H^i(E_1)\to H^{i+1}(M)\to \ldots
\]
where $\pi_i:E_i\to L_i$ denotes the foot-point projection.
\end{enumerate}
\end{lemma}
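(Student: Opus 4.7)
The plan is to apply Mayer–Vietoris to the open cover $M = A \cup B$ with $A := M \setminus L_2$ and $B := M \setminus L_1$, so that $A \cap B = M'$. The geometric setup is that $X \cong S^3$ decomposes as $X = \bar B_1 \cup \bar B_2$, a union of two closed topological $3$-balls with $p_i \in \bar B_i^\circ$ and $\bar B_1 \cap \bar B_2 = \partial \bar B_1 \cong S^2$. By the Slice Theorem applied at $L_i$, $\bar U_i := \pi^{-1}(\bar B_i)$ is homeomorphic to the closed normal disk bundle of $L_i$ with boundary $E_i$, and $\bar U_1 \cap \bar U_2 = \pi^{-1}(S^2)$ is simultaneously diffeomorphic, as a subset of $M$, to $E_1$ and to $E_2$. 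This provides the canonical identification yielding $\Phi:H^*(E_2)\to H^*(E_1)$.

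The bulk of the work is to establish the $G$-equivariant homotopy equivalences $A \simeq L_1$, $B \simeq L_2$, and $M' \simeq E_1 \simeq E_2$. In $X$, the sets $X \setminus \{p_2\}$, $X \setminus \{p_1\}$, and $X \setminus \{p_1, p_2\}$ deformation retract onto $\bar B_1$, $\bar B_2$, and the separating $S^2$, respectively. The plan is to lift these retractions to $G$-equivariant deformation retractions on $M$ using the normal exponential at $L_i$, and then compose with the disk-bundle retractions of $\bar U_i$ onto $L_i$. The retraction $M' \to \pi^{-1}(S^2)$ then gives both $M' \simeq E_1$ and $M' \simeq E_2$, which is exactly part (1) of the lemma.

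Part (2) follows by writing out Mayer–Vietoris for the open cover $\{A, B\}$ in rational cohomology,
\[
\ldots \to H^i(M) \to H^i(A) \oplus H^i(B) \to H^i(A \cap B) \to H^{i+1}(M) \to \ldots,
\]
and substituting the identifications above. To identify the middle map, I would trace the chain of inclusions $E_1 \hookrightarrow M' \hookrightarrow A$: its composition is homotopic to the bundle projection $\pi_1:E_1 \to L_1$, so the $L_1$-summand of the map is $\pi_1^*$. Symmetrically, the $L_2$-summand factors through $\Phi$ as $\Phi \circ \pi_2^*$, and the standard Mayer–Vietoris sign produces the stated difference.

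The main obstacle is the construction of the $G$-equivariant deformation retractions in the presence of the singular stratification of $X$: since $X$ has non-orbifold points and a one-dimensional singular stratum, the projection $\pi$ is not a fibration and homotopies on $X$ cannot be lifted naively. The cleanest remedy is to avoid $X$ altogether and work on $M$ with the $G$-invariant distance functions $\rho_i = d(\cdot, L_i)$, whose horizontal gradient-like flows are well-defined away from $L_i$ and compatible with the Slice Theorem near each singular orbit. These flows deliver the required $G$-equivariant retractions of $A$, $B$, and $M'$ onto $\bar U_1$, $\bar U_2$, and $\pi^{-1}(S^2)$, after which the disk-bundle retractions finish the job.
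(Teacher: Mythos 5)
Your proposal takes a genuinely different route from the paper's, and it has several gaps.

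The core of the lemma is part (1), $H^*(E_i;\Q) \cong H^*(M';\Q)$. You propose to obtain this via $G$-equivariant deformation retractions lifted from $X$, or equivalently via horizontal gradient flows of $\rho_i = d(\cdot, L_i)$. But there is an error already in the geometric setup: the preimage $\bar U_i = \pi^{-1}(\bar B_i)$ of a large closed $3$-ball $\bar B_i \subset X \cong S^3$ is \emph{not} the normal disk bundle of $L_i$. The Slice Theorem gives that identification only for a small $\epsilon$-tube. Here the quotient has a $1$-dimensional singular stratum running into $p_1$ and $p_2$, so any ball $\bar B_i$ containing $p_i$ must contain a piece of that stratum, whose preimage consists of non-principal orbits; thus $\pi^{-1}(\bar B_i)$ is not a disk bundle over $L_i$. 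For the same reason, the assertion that $\pi^{-1}(\partial\bar B_1)$ is ``simultaneously diffeomorphic, as a subset of $M$, to $E_1$ and to $E_2$'' is unjustified: $E_i = \partial B_\epsilon(L_i)$ are different subsets of $M$, and identifying either with $\pi^{-1}(S^2)$ would itself require the retraction you defer to the last paragraph.

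That deferred retraction is the real content and it is not delivered. The distance function $\rho_1$ is not smooth, has a cut locus, and the orbit type changes as one moves through $M'$, so $\pi|_{M'}$ is not a fibration and homotopies on $X$ do not lift naively --- you acknowledge this, but replacing it with ``horizontal gradient-like flows'' still requires showing $\rho_1$ has no critical points (in the Grove--Shiohama sense) in $M' \setminus (B_\epsilon(L_1) \cup B_\epsilon(L_2))$, and nothing in the hypotheses makes that obvious.

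The paper sidesteps all of this by a cohomological argument that needs no retraction. Since $G$ acts almost freely on $M'$ and on $E_i$, the Borel constructions $E_i \times_G EG$ and $M' \times_G EG$ have the same rational cohomology as the quotients $\pi(E_i) \hookrightarrow \pi(M')$, which are $S^2 \hookrightarrow (0,1) \times S^2$ and hence a homotopy equivalence. Pulling this isomorphism back through the principal $G$-bundles $E_i \times EG \to E_i \times_G EG$ and $M' \times EG \to M' \times_G EG$ yields that $\iota_i^* : H^*(M';\Q) \to H^*(E_i;\Q)$ is an isomorphism. Part (2) is then deduced from $H^*(M_i, L_i) \cong H^*(M', E_i) = 0$ by excision (using the Slice Theorem only at small $\epsilon$), not from a claim that $M_i \simeq L_i$. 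Your Mayer--Vietoris cover $\{M \setminus L_2, M \setminus L_1\}$ and the identification of the middle map as $\pi_1^* - \Phi \circ \pi_2^*$ are the same as the paper's, and that bookkeeping is fine; it is the proof of part (1) that your argument does not establish.
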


\begin{proof}
(1) We identify $E_i$ with $\partial B_\epsilon(L_i)$ for some small $\epsilon$, via the normal exponential map. We know that $G$ acts almost freely on $M'$. Moreover, $E_i$ is $G$-invariant 
and the action of $G$ on $E_i$ is almost free as well. Now, consider the commutative diagram
\begin{center}
\begin{tikzcd}
&E_1\arrow[rr] \arrow[dd,"\pi"]\arrow[dr,"\iota_1"]& &E_1\times EG \arrow{dd}[near start]{\hat{\pi}}\arrow[dr,"\iota_1\times id"]\\
&&M' \arrow[rr, crossing over] & &M'\times EG \arrow[dd,"\hat{\pi}"] \\
\{\epsilon\}\times S^2 \arrow[r,"\simeq"]\arrow[dr,hook]&\pi(E_1)\arrow[dr,hook]&&E_1\times_GEG\arrow[ll]\arrow[dr,"\iota_1\times_Gid"]\\
&(0,1)\times S^2\arrow[r,"\simeq"]&\pi(M') \arrow[from=uu,crossing over,near start,"\pi"]&& M'\times_G EG \arrow[ll]
\end{tikzcd}
\end{center}

The rows induce isomorphisms in rational cohomology, while the vertical maps named $\hat{\pi}$ are both principal $G$-bundles. Since the inclusion $\{\epsilon\}\times S^2\to (0,1)\times S^2$ 
is a homotopy equivalence, the commutativity of the diagram implies that the map $\iota_1\times_Gid$ induces isomorphism in rational cohomology. 
Since the map $E_1\times EG\to E_1\times_G EG$ is the pullback of $M'\times EG\to M'\times_GEG$ via $\iota_1\times_Gid$, it follows that the map $\iota_1\times id$ induces an isomorphism in rational cohomology as well. 
Finally, since the inclusion $E_1\to M'$ is homotopic to $E_1\times EG\to M'\times EG$, it must also induce an isomorphism in rational cohomology. The same arguments carry over for $E_2$.

(2) Let $M_i=M\setminus L_{3-i}$, $i=1,2$. Notice that
\[
H^*(M_i, L_i)\cong H^*(M_i, B_\epsilon(L_i))\cong H^*(M', B_\epsilon(L_i)\setminus L_i)\cong H^*(M',E_i)=0,
\]
where the last equality follows from Part (1). In particular, the inclusion $L_i\to M_i$ induces an isomorphism in rational cohomology, and the result follows from the standard Mayer-Vietoris sequence 
for the quadruple $(M, M_1, M_2, M')$.
\end{proof}

\begin{corollary}\label{C:H1=0}
Assume that $M$ is a $G$-manifold which satisfies Properties (1)-(3) at the beginning of this section. Then the singular orbits $L_i$ and their sphere bundles $E_i$ 
have vanishing first rational cohomology. In addition, both $L_i$ and $E_i$ are orientable.
\end{corollary}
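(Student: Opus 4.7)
The plan is to combine the Mayer-Vietoris sequence of Lemma~\ref{L:MV} with the Gysin sequences of the normal sphere bundles $E_i\to L_i$. As a preliminary step, I would verify that $\nu L_i$ is an orientable rank-$4$ vector bundle. Indeed, at a non-orbifold point $p_i$, the slice representation of $G_{p_i}$ on $\nu_{p_i}L_i\cong\R^4$ is constrained by Lemma~\ref{L:codim2}: in case (2) the isotropy is $S^1$ acting by a weighted Hopf action, in case (4) it lies in the maximal torus $T^2\subset SO(4)$, and in case (3) the proof of Lemma~\ref{L:codim2} shows that the only potentially orientation-reversing element $c$ is excluded, so $G_{p_i}\subset U(2)\subset SO(4)$. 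Thus $G_{p_i}$ acts by orientation-preserving linear isometries, $\nu L_i$ is an oriented rank-$4$ vector bundle, and $E_i\to L_i$ is an oriented $S^3$-bundle to which the Gysin sequence applies.

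With this in hand, I would deduce vanishing of $H^1$. The Gysin sequence of $E_i\to L_i$ in degree one gives $\pi_i^*:H^1(L_i)\xrightarrow{\sim} H^1(E_i)$, since the adjacent terms $H^{-3}(L_i)$ and $H^{-2}(L_i)$ vanish. Combined with the isomorphism $\Phi:H^*(E_2)\cong H^*(E_1)$ from Lemma~\ref{L:MV}(1), this yields $d:=\dim H^1(L_1)=\dim H^1(L_2)=\dim H^1(E_1)$. Feeding this into the Mayer-Vietoris sequence at degree one and using $H^1(M)=0$, the map $(\pi_1^*-\Phi\circ\pi_2^*):H^1(L_1)\oplus H^1(L_2)\to H^1(E_1)$ must be injective; but $\pi_1^*$ alone is already an isomorphism onto $H^1(E_1)$, so the kernel of the combined map has dimension at least $2d-d=d$. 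This forces $d=0$, giving $H^1(L_i)=H^1(E_i)=0$.

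For orientability, I would repeat the argument at the top degree. Write $n=\dim M$ and $\ell=\dim L_i=n-4$. All of $H^{n-1}(L_i)$, $H^n(L_i)$, and $H^n(E_1)$ vanish for dimension reasons, so the Mayer-Vietoris sequence at degree $n$ collapses to an isomorphism $H^{n-1}(E_1)\xrightarrow{\sim} H^n(M)=\Q$ (using that $M$ is closed, simply connected, and hence orientable). The Gysin sequence of $E_1\to L_1$ in degree $n-1$ then provides $H^{n-1}(E_1)\cong H^{\ell}(L_1)$, so $H^{\ell}(L_1)\neq 0$ and $L_1$ is orientable. Running the analogous pair of sequences with $E_2$ in place of $E_1$ (which is available because Lemma~\ref{L:MV} is symmetric in $L_1,L_2$) yields orientability of $L_2$, and each $E_i$ is then an orientable $S^3$-bundle over an orientable base, hence an orientable manifold. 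The most delicate step is the preliminary orientability check of $\nu L_i$: it is precisely this that unlocks the Gysin isomorphisms used in both dimension counts above.
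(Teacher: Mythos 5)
Your proof is correct, and it reaches the same conclusions as the paper, but the route is genuinely different at almost every step, so a comparison is worthwhile.

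The paper's argument is "cohomology first, geometry later": orientability of $E_i$ comes straight from the Mayer--Vietoris isomorphism $H^{n-1}(E_i)\cong H^n(M)\cong\Q$ together with the fact that $H^{n-1}(E_i;\Q)\neq 0$ forces a closed $(n-1)$-manifold to be orientable; $H^1(E_i)=0$ then drops out of Poincar\'e duality applied to $E_i$ and the Mayer--Vietoris at degree $n-1$; and $H^1(L_i)=0$ follows because the Mayer--Vietoris sequence then reads $0\to H^1(L_1)\oplus H^1(L_2)\to 0$. Only at the very end does the paper invoke the Slice Theorem, and only to establish that $\nu L_i$ is orientable, which it combines with the splitting $TM|_{L_i}\cong TL_i\oplus\nu L_i$ to conclude orientability of $L_i$. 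You instead front-load the geometry: you first establish that $\nu L_i$ is an oriented $\R^4$-bundle (via a case analysis of Lemma~\ref{L:codim2} that is equivalent to, but more explicit than, the paper's observation that $G_p$ acts orientation-preservingly because both $\nu^1_pL_i$ and $\nu^1_pL_i/G_p\simeq S^2$ are orientable), and then deduce everything from the Gysin sequence of the resulting oriented $S^3$-bundle $E_i\to L_i$. In particular your $H^1$-vanishing is a pure dimension count: $H^1(L_i)\cong H^1(E_i)$ by Gysin, and injectivity in the Mayer--Vietoris sequence forces $2d\leq d$, hence $d=0$. Your orientability of $L_1$ also goes through cohomology ($H^{n-4}(L_1)\cong H^{n-1}(E_1)\cong H^n(M)\neq 0$), whereas the paper reads it off the tangent-bundle splitting. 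Both arguments are sound. The paper's approach has the advantage of not needing the Gysin sequence (and hence not needing the orientability of $\nu L_i$) for the $H^1$-vanishing, so that part of the argument is self-contained; yours has the advantage of reusing the single preliminary fact (orientability of the normal bundle) to power every subsequent step, which is a cleaner organizing principle.

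One small remark: your degree-one dimension count is phrased as "the kernel has dimension at least $2d-d=d$," but the cleaner statement is simply that injectivity of a linear map $\Q^{2d}\to\Q^d$ forces $d=0$. Also, you should note explicitly that $L_i$ is connected (it is a single orbit of the connected group $G$), which is what makes $H^{\dim L_i}(L_i;\Q)\neq 0$ equivalent to orientability.
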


\begin{proof}
Let $n=\dim M$, so that $\dim E_i=n-1$ and $\dim L_i=n-4$. From the Mayer-Vietoris sequence above, we have $H^{n-1}(E_i)\cong H^n(M)\cong \mathbb{Q}$, 
which implies that $E_i$ is orientable, and $H^1(E_i)\cong H^{n-2}(E_i)\cong H^{n-1}(M)\cong 0$ (here we used Poincar\'e Duality first, and Mayer-Vietoris second). 
This proves one of the statements. Again, from the Mayer-Vietoris sequence, we have that the sequence $0\to H^1(L_1)\oplus H^1(L_2)\to 0$ is exact, from which $H^1(L_i)=0$ follows.

It remains to prove that $L_i$ is orientable. To do this, we first prove that the normal bundle of $L_i$ is orientable. Fixing a point $p\in L_i$, consider the orbifold bundle 
$\nu^1_p(L_i)\to \nu^1_p(L_i)/G_p\simeq S^2$. Since the quotient is orientable and the sphere $\nu^1_p(L_i)$ is orientable, it follows that $G_p$ acts on $\nu^1_p(L_i)$, 
and on $\nu_p(L_i)$, by orientation-preserving isometries. The Slice Theorem $\nu(L_i)\simeq G\times_{G_p}\nu_p(L_i)$ then implies that $\nu(L_i)$ is orientable as a bundle. 
But since $TM|_{L_i}\simeq TL_i\oplus \nu(L_i)$ and $TM$ is orientable, it follows that $L_i$ is orientable as well.
\end{proof}

To prove the main results of this Section, that is, Propositions \ref{P:two} and \ref{P:one}, we will make multiple use of the following two Lemmas:
 
 \begin{lemma}\label{L:RHT}
Suppose that $Z$ is a simply connected topological space with the same cohomology ring of $S^n$, $\mathbb{C}\mathrm{P}^k$, $S^n\times S^m$ (with $m$ odd) or $S^n\times S^m\times S^r$ (with $n,m$ odd and $r$ even). Then $Z$ has the same rational homotopy type as the corresponding space, and in particular it is rationally elliptic.
\end{lemma}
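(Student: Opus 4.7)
The strategy is to show that in each of the listed cases the space $Z$ admits the same minimal Sullivan model as the reference space $W\in\{S^n,\C\mathrm{P}^k,S^n\times S^m,S^n\times S^m\times S^r\}$. Since minimal Sullivan models classify simply connected spaces of finite type up to rational homotopy equivalence, this forces $Z$ to be rationally equivalent to $W$. As spheres, complex projective spaces, and products of rationally elliptic spaces are themselves rationally elliptic, rational ellipticity of $Z$ then follows at once.

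First I would write down the minimal Sullivan model $(\Lambda V_W,d)$ for each reference space. For an odd sphere $S^{2k+1}$ this is $(\Lambda(x),0)$ with $|x|=2k+1$; for an even sphere $S^{2k}$ it is $(\Lambda(x,y),d)$ with $|x|=2k$, $|y|=4k-1$, $dx=0$, $dy=x^2$; and for $\C\mathrm{P}^k$ it is $(\Lambda(x,y),d)$ with $|x|=2$, $|y|=2k+1$, $dx=0$, $dy=x^{k+1}$. The models of the listed products are tensor products of these. Under the parity hypotheses on $n,m,r$ each resulting model has at most four generators, and at most one generator $y$ (of odd degree) has a nontrivial differential, equal to a monomial $P$ in the remaining (even) generators; the other generators are cocycles.

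Next, given $Z$ with $H^*(Z;\Q)\cong H^*(W;\Q)$ as graded rings, I would lift each generator of the cohomology ring to a cocycle $\tilde x_i\in A_{PL}(Z)$. For the possibly-present odd generator $y$ with $dy=P(x_1,\ldots)$ in the model, the cocycle $P(\tilde x_1,\ldots)\in A_{PL}(Z)$ represents the class $P(x_1,\ldots)=0\in H^*(Z;\Q)$ and is therefore exact; pick $\tilde y\in A_{PL}(Z)$ with $d\tilde y=P(\tilde x_1,\ldots)$. The assignment $x_i\mapsto\tilde x_i$, $y\mapsto\tilde y$ extends to a morphism of CDGAs $\varphi:(\Lambda V_W,d)\to A_{PL}(Z)$. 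A direct computation shows $H^*(\Lambda V_W,d)\cong H^*(W;\Q)$ as graded rings, and by construction $H^*(\varphi)$ carries ring generators to ring generators, so $\varphi$ is a quasi-isomorphism; hence $(\Lambda V_W,d)$ is a minimal Sullivan model of $Z$ as well.

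The only nontrivial step is the exactness of $P(\tilde x_1,\ldots)$ in $A_{PL}(Z)$, which is immediate from the assumption that the relation $P=0$ holds in $H^*(Z;\Q)$ as a ring. Thus there is no real obstacle; the lemma is essentially a clean restatement of the well-known formality of spheres and complex projective spaces, and the uniform structure of the listed minimal models allows the same recipe to dispatch all four families.
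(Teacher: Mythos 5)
Your argument is correct and follows the same approach as the paper: in both, one lifts ring generators of $H^*(Z;\Q)$ to cocycles in $A_{PL}(Z)$, adjoins (when needed) a potential $\tilde y$ killing the relation $P(\tilde x_i)$, and observes that the resulting free CDGA is the minimal Sullivan model of the reference space, yielding a quasi-isomorphism. The paper simply writes this out case by case, whereas you state the uniform recipe once; the content is the same.
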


\begin{proof}
Recall that given a simply connected space $Z$, there is a commutative cochain algebra $A_{PL}(Z)$ with $H^*(A_{PL}(Z))\cong H^*(Z)$ and a free differential graded algebra 
$(\bigwedge V,d)$ with a quasi-isomorphism $\bigwedge V\to A_{PL}(V)$ (that is, a morphism of differential graded algebras, inducing an isomorphism in cohomology) satisfying $d(V)\subset \bigwedge^{\geq 2} V$ (called the \emph{minimal Sullivan algebra} of $Z$). 
The minimal Sullivan algebra is unique up to isomorphism, and it determines the rational homotopy groups of $Z$ since $V_i\cong \operatorname{Hom}(\pi_i(Z),\Q)$. 
In particular, $Z$ is rationally elliptic if and only if $V$ is finite dimensional.

If $H^*(A_{PL}(Z))= H^*(S^n)$ with $n$ odd, let $\alpha\in A_{PL}^n(Z)$ ($d\alpha=0$), be a representative for the generator of $H^n(Z)$. Since $n$ is odd, $\alpha^2=0$ and thus 
$(\bigwedge(\alpha),d=0)$ is the minimal Sullivan algebra of $Z$, hence $V=\operatorname{span}(\alpha)$ is finite dimensional.

If $H^*(A_{PL}(Z))= H^*(S^n)$ with $n$ even, let $\alpha\in A_{PL}^n(Z)$ ($d\alpha=0$), be a representative for the generator of $H^n(Z)$. Since $[\alpha^2]=0\in H^{2n}(Z)$, 
there is a $\beta\in A_{PL}^{2n-1}(Z)$ such that $d\beta=\alpha^2$, and it is easy to see that $(\bigwedge(\alpha, \beta), d\beta=\alpha^2, d\alpha=0)$ is the minimal Sullivan algebra of $Z$. 
In particular, $V=\operatorname{span}(\alpha,\beta)$.

If $H^*(A_{PL}(Z))\cong H^*(\mathbb{C}\mathrm{P}^k)$, let $\alpha\in A_{PL}^2(Z)$ be a cocycle representing the generator of $H^2(Z)$.
Since $\alpha^{k+1}=0$, we have $\alpha^{k+1}=d\beta$ for some $\beta\in A_{PL}^{2k+1}(Z)$. Then $(\bigwedge(\alpha,\beta), d\alpha=0, d\beta=\alpha^{k+1}$) is the minimal Sullivan algebra of $X$, 
hence $V=\operatorname{span}(\alpha,\beta)$.

If $H^*(A_{PL}(Z))= H^*(S^n\times S^m)$ with $n\neq m$ odd, let $\alpha_1, \alpha_2$ be cocycles representing the generators of $H^n(Z)$ and $H^m(Z)$. Then $\alpha_1\wedge \alpha_2$ represents a generator of $H^{n+m}(Z)$ 
and $(\bigwedge(\alpha_1,\alpha_2),d=0)$ is the minimal Sullivan algebra of $X$. In particular, $V=\operatorname{span}(\alpha_1,\alpha_2)$.

If $H^*(A_{PL}(Z))= H^*(S^m\times S^m)$ with $m$ odd, let $\alpha_1, \alpha_2\in A_{PL}^m(Z)$ be cocycles representing a basis of $H^m(Z)$. Then $\alpha_1\wedge \alpha_2$ 
represents a generator of $H^{2m}(Z)$ and $(\bigwedge(\alpha_1,\alpha_2),d=0)$ is the minimal Sullivan algebra of $Z$. In particular, $V=\operatorname{span}(\alpha_1,\alpha_2)$.

If $H^*(A_{PL}(Z))= H^*(S^n\times S^m)$ with $n$ even and $m$ odd, let $\alpha_1, \alpha_2$ be the cocycles representing generators of $H^n(Z)$ and $H^m(Z)$. Furthermore, $\alpha_1^2=d\beta$, 
and $\wedge(\alpha_1, \alpha_2,\beta)$ is the minimal Sullivan algebra of $Z$. Therefore $V=\operatorname{span}(\alpha_1,\alpha_2,\beta)$.

If $H^*(A_{PL}(Z))= H^*(S^n\times S^m\times S^r)$ with $n,m$ odd and $r$ even, let $\alpha_1, \alpha_2, \alpha_3$ be the cocycles representing generators of $H^n(Z)$, $H^m(Z)$, and $H^r(Z)$. Then $\alpha_1^2=\alpha_2^2=0$ , $\alpha_3^2=d\beta$, 
and $\wedge(\alpha_1, \alpha_2,\alpha_3,\beta)$ is the minimal Sullivan algebra of $Z$. Therefore $V=\operatorname{span}(\alpha_1,\alpha_2,\alpha_3,\beta)$.
\end{proof}

The following lemma will be very useful to easily determine the rational cohomology of the singular orbits:
\begin{lemma}\label{L:H0}
Let $M$ be a closed, positively curved manifold with a cohomogeneity-three $G$-action such that $\partial(M/G)=\emptyset$. Let $L_p$ be a singular orbit with isotropy group $H:=G_p$. 
Then $L_p\simeq G/H$ has the same rational cohomology as $G/H_0$.
\end{lemma}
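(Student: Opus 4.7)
The plan is to compare $H^\ast(G/H;\Q)$ with $H^\ast(G/H_0;\Q)$ through the natural finite cover $\pi\colon G/H_0\to G/H$ with deck group $K := H/H_0$. This is a regular cover on which $K$ acts freely by right multiplication (well-defined since $H_0\triangleleft H$), so the standard transfer argument with rational coefficients gives an isomorphism $\pi^\ast\colon H^\ast(G/H;\Q)\stackrel{\cong}{\longrightarrow}H^\ast(G/H_0;\Q)^{K}$. The lemma therefore reduces to proving that $K$ acts trivially on $H^\ast(G/H_0;\Q)$.

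The $K$-action on $G/H_0$ extends naturally to an action of the Weyl-type group $W := N_G(H_0)/H_0$ on $G/H_0$ by right multiplication, via the inclusion $K=H/H_0\hookrightarrow W$ coming from $H\subseteq N_G(H_0)$. Since $W$ is a compact Lie group, its identity component $W_0$ acts by maps homotopic to the identity, and hence trivially on rational cohomology. Moreover, because $H_0$ is a torus inside the connected compact Lie group $G$, its centralizer $C_G(H_0)$ is connected, so $W_0 = C_G(H_0)/H_0$. Consequently, to conclude it suffices to show that $K$ centralizes $H_0$, or equivalently that $H_0$ is central in $H$.

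This last point is where the classification from Lemma \ref{L:codim2} enters. If $H_0=\{e\}$ the condition is vacuous, and the lemma follows directly from the observation that right multiplication on $G=G/H_0$ by each element of the finite group $H$ is homotopic to the identity through a path in connected $G$. If $H_0 = S^1$, then $H$ falls in case (2), (3), or (4) of Lemma \ref{L:codim2}: in case (2) one has $H=H_0$; in case (3), $H$ is a subgroup of $U(2)$ with $H_0$ equal to the center of $U(2)$; and in case (4), $H=S^1\times \Z_d$ is a direct product. In each of these cases, $H_0$ is central in $H$, and the proof concludes.

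The main conceptual step is identifying centrality of $H_0$ inside $H$ as the precise obstruction to the Weyl-type action being trivial on cohomology; once that is spotted, the verification is a quick run through the list in Lemma \ref{L:codim2}.
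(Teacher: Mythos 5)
Your argument is correct, and it reaches the same destination as the paper's proof by a genuinely different route. The paper proves the statement by comparing the Serre spectral sequences of the two fibrations $G\to G/H_0\to BH_0$ and $G\to G/H\to BH$: it first shows $BH_0\to BH$ is a rational cohomology isomorphism (using the fibration $BH_0\to BH\to B\pi_0(H)$ and the triviality of the $\pi_0(H)$-action on $H_0$), and then that $\pi_1(BH)=\pi_0(H)$ acts trivially on $H^*(G;\Q)$ because $G$ is connected. You instead use the finite regular cover $G/H_0\to G/H$ and the rational transfer isomorphism $H^*(G/H;\Q)\cong H^*(G/H_0;\Q)^K$, then show the deck group $K=H/H_0$ acts trivially on cohomology by exhibiting, for each $h\in H$, a path in $N_G(H_0)$ from $e$ to $h$ (via $H\subset C_G(H_0)$ and connectedness of centralizers of tori). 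Both proofs ultimately hinge on the same two facts: that translations by elements of a connected group act trivially on cohomology, and that in every case of Lemma \ref{L:codim2} the group $H_0$ is central in $H$. Your version avoids the spectral-sequence bookkeeping and makes the centrality condition the explicit pivot, at the cost of importing the (standard, but unused in the paper's version) fact that $C_G(H_0)$ is connected; the detour through $W=N_G(H_0)/H_0$ could be shortened by noting directly that, once $H\subset C_G(H_0)$ with $C_G(H_0)$ connected, right translation on $G/H_0$ by each $h\in H$ is isotopic to the identity through maps $R_{\gamma(t)}$ with $\gamma(t)\in N_G(H_0)$.
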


\begin{proof}
Recall from Lemma \ref{L:codim2} that, if $H$ is disconnected, it is either finite or a finite extension $1\to H_0\to H\to K\to 1$, where $K=\pi_0(H)$ acts trivially on $H_0=S^1$.

If $H$ is finite, then we have a fibration $G\to G/H\to BH$ where $\pi_1(BH)\cong H$ acts on $G$ by left translations. Since $G$ is connected, it acts trivially on $H^*(G)$. One can then apply the Serre spectral sequence with rational coefficients and, since $H^{>0}(BH,\Q)=0$, it follows that $H^*(G,\Q)\cong H^*(G/H,\Q)$.

If $H$ is a finite extension $1\to H_0\to H\to \pi_0(H)\to 1$, where $\pi_0(H)$ acts trivially on $H_0=S^1$, then the short exact sequence induces a fibration $BH_0\to BH\to B\pi_0(H)$ where $\pi_1(B\pi_0(H))=\pi_0(H)$ acts trivially on $BH_0$, and therefore the spectral sequence of this fibration implies that the map $BH_0\to BH$ 
induces an isomorphism in rational cohomology. The fibrations
\begin{center}
\begin{tikzcd}
G\arrow[r] \arrow[d]&G \arrow[d]\\
G/H_0\arrow[r] \arrow[d]& G/H \arrow[d]\\
BH_0\arrow[r] &BH
\end{tikzcd}
\end{center}
are pullback of one another. Furthermore, $BH_0$ is simply connected, while $\pi_1(BH)=\pi_0(H)$ acts trivially on $H^*(G)$ since all the elements of $H\subset G$ are isotopic to the identity 
(because $G$ is connected). In particular, we can consider the spectral sequences of the two fibrations in rational cohomology, which are pullback of one another. 
But since $BH_0\to BH$ induces an isomorphism in rational cohomology, it follows that the map $G/H_0\to G/H$ induces an isomorphism in rational cohomology as well.
\end{proof}

\begin{proposition}\label{P:two}
If $M$ is a $G$-manifold which satisfies Properties (1)-(3) at the beginning of Section \ref{SS:two}, then $M$ is rationally elliptic.
\end{proposition}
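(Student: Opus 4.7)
The plan is to exploit the strong restriction on the group $G$ coming from Proposition~\ref{P:groups}, reduce to a short list of cases, compute the rational cohomology of $M$ in each case via the Mayer-Vietoris sequence of Lemma~\ref{L:MV}, and finally identify the rational homotopy type using Lemma~\ref{L:RHT}.

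First I would enumerate the possibilities for $G$. Since there is a stratum of weight two, Proposition~\ref{P:groups} says that, up to a finite cover, $G$ is one of $S^1$, $T^2$, $S^3$, $S^1 \times S^3$, or $S^3 \times S^3$. As noted in the excerpt, the first two are already handled in \cite{HK89} and \cite{GGS14}, so the relevant cases are the last three, giving $\dim M \in \{6, 7, 9\}$ (using that the principal isotropy is trivial). At each non-orbifold point $p_i$, Lemma~\ref{L:codim2} classifies the isotropy $G_{p_i}$ as belonging to one of types (2), (3), or (4); in every such case the identity component of the isotropy is $S^1$. Combined with Lemma~\ref{L:H0}, this identifies the rational cohomology of the singular orbit $L_i$ with that of $G/S^1$, a small list of possibilities depending on how the isotropy $S^1$ embeds into $G$.

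With $L_i$ understood, I would compute the rational cohomology of the normal sphere bundle $E_i \to L_i$ (fiber $S^3$) via the Gysin sequence; the Euler class lives in $H^4(L_i)$, which in these low-dimensional cases is often zero, so $H^*(E_i)$ splits as $H^*(L_i)\tensor H^*(S^3)$ rationally. The vanishing $H^1(L_i) = H^1(E_i) = 0$ from Corollary~\ref{C:H1=0} eliminates several embeddings of the isotropy right away (for example, $G/S^1 \simeq S^1 \times S^2$ cannot occur when $G = S^1 \times S^3$). I would then feed the ranks of $H^*(L_i)$ and $H^*(E_i)$, together with the pullback maps $\pi_i^*$ and the isomorphism $\Phi$ of Lemma~\ref{L:MV}, into the Mayer-Vietoris sequence to determine the Betti numbers of $M$, cross-checking against Poincar\'e duality and the simple-connectivity of $M$. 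In degrees where $\pi_i^*$ is known to be an isomorphism, the sequence forces corresponding cancellations, and a dimension count via the alternating sum in each four-term exact piece pins down the remaining ranks.

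The expected output in each case is a Poincar\'e polynomial matching one of the model spaces in Lemma~\ref{L:RHT}: $\mathbb{C}\mathrm{P}^3$ for $\dim M = 6$, a product of two spheres for $\dim M = 7$, and a product of three spheres for $\dim M = 9$. To invoke Lemma~\ref{L:RHT} I need the cohomology \emph{ring}, not just the Betti numbers, so the final step is to read off the ring structure using Poincar\'e duality and the multiplicative information from the pullbacks $\iota_i^*\colon H^*(M)\to H^*(L_i)$ in the Mayer-Vietoris sequence. The main obstacle I anticipate is precisely this ring identification in the two higher-dimensional cases, where several candidate model spaces (e.g.\ $S^3\times S^4$ versus $S^7$, or $S^3\times S^3\times S^3$ versus $S^3\times S^6$) have compatible Betti numbers and must be distinguished by a careful bookkeeping of the restriction maps to the singular orbits, which in turn depends on the specific embeddings of the isotropy groups at $p_1$ and $p_2$.
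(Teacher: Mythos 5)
Your overall strategy matches the paper's: restrict $G$ to $S^3$, $S^1\times S^3$, or $S^3\times S^3$ via Proposition~\ref{P:groups}, compute $H^*(L_i)$ via Lemma~\ref{L:H0}, compute $H^*(E_i)$, feed into the Mayer--Vietoris sequence of Lemma~\ref{L:MV}, and finally identify the rational homotopy type from the cohomology ring. This is exactly what the paper does, case-by-case in dimensions $6$, $7$, and $9$.

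However, there is a genuine gap at the final step. You claim the output in each case will be a ring covered by Lemma~\ref{L:RHT} (``$\mathbb{C}\mathrm{P}^3$ for $\dim M = 6$, \dots''), but this is not so: the actual possibilities include $H^*(S^2\times S^4)$ in dimension $6$, and $H^*(\mathbb{C}\mathrm{P}^3\times S^3)$ or $H^*(S^2\times S^3\times S^4)$ in dimension $9$. None of these is on the list in Lemma~\ref{L:RHT} --- the lemma requires the last factor $S^m$ in a product $S^n\times S^m$ to be odd-dimensional, and in a triple product it requires two odd and one even sphere. The paper closes this gap by passing to the $2$-connected principal circle bundle $N\to M$ (which kills $\pi_2$ rationally), computing $H^*(N)$ via the Gysin sequence, and observing that $N$ then \emph{does} have a ring covered by Lemma~\ref{L:RHT} ($S^3\times S^4$, $S^3\times S^7$, or $S^3\times S^3\times S^4$); rational ellipticity of $N$ then transfers to $M$ since $N\to M$ is a fibration with rationally elliptic fiber. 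Without this circle-bundle maneuver the argument does not close.

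A minor secondary point: you suggest that $S^3\times S^4$ vs.\ $S^7$ (and $S^3\times S^3\times S^3$ vs.\ $S^3\times S^6$) ``have compatible Betti numbers,'' but they do not. In the paper's Case~2 the two options $S^7$ and $S^3\times S^4$ arise because the Mayer--Vietoris map $\psi$ in degree $3$ could be either an isomorphism or merely nonzero of rank one; they are distinguished by the Betti numbers themselves. The actually delicate ring identification occurs in Case~3, where one must track restrictions of generators to $L_1$, $L_2$ to decide between $\mathbb{C}\mathrm{P}^3\times S^3$ and $S^2\times S^3\times S^4$ (distinguished by whether $z_2^2 \neq 0$), and even there both outcomes are resolved by the same circle-bundle trick, so the distinction is not ultimately load-bearing.
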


\begin{proof}
By the discussion at the beginning of this section, $M$ can have dimension $6$, $7$, or $9$. In each dimension, we will determine the rational homotopy type of $M$.

{\textbf{Case 1:}} The $6$-dimensional case. In this case, $G$ is rationally homotopic to $S^3$, the singular leaves are rationally homotopic to $S^2$ by Lemma \ref{L:H0}, and from the spectral sequence 
of $S^3\to E_i\to L_i$ it follows that $E_i$ is rationally $S^3\times S^2$. The Mayer-Vietoris sequence then reads as follows:
\[
0\to H^2(M)\to \Q^2\stackrel{\psi}{\to}\Q\to H^3(M)\to 0,\quad H^4(M)\cong \Q,\quad H^5(M)\cong 0,\quad H^6(M)\cong \Q,
\]
where $\psi=\pi_1^*- \Phi\circ\pi_2^*:H^2(L_1)\oplus H^2(L_2)\to H^2(E_1)$. By Poincar\'e Duality, we have $H^2(M)\cong H^4(M)\cong \Q$. But since $H^2(M)\cong \ker \psi$,
it follows that $\psi$ is surjective and thus $H^3(M)=0$. Therefore, $H^i(M)\cong \Q$ for $i=0,2,4,6$ and $0$ otherwise. The cohomology ring of $M$ is then either that of $\mathbb{C}\mathrm{P}^3$, 
or that of $S^2\times S^4$.  {In the first case, Lemma \ref{L:RHT} gives the result. In the second case, consider the 2-connected principal circle bundle $N\to M$ of $M$. By the Gysin sequence, it follows that $H^*(N)\cong H^*(S^3\times S^4)$, which by Lemma \ref{L:RHT} is rationally elliptic, and thus $M$ is rationally elliptic as well.}

{\textbf{Case 2:}} The $7$-dimensional case. In this case, $G$ is rationally $S^3\times S^1$ and, by Lemma \ref{L:H0}, $L_i$ has the rational cohomology of a quotient $(S^3\times S^1)/S^1$. 
Since $H^1(L_i)=0$ by Corollary \ref{C:H1=0}, it follows that $L_i$ is rationally homotopic to $S^3$. Furthermore, since $E_i$ is the total space of an $S^3$-bundle over $L_i$, 
it has the rational cohomology of $S^3\times S^3$. The Mayer-Vietoris sequence then reads as follows:
\[
H^2(M)\cong 0, 0\to H^3(M)\to \Q^2\stackrel{\psi}{\to} \Q^2\to H^4(M)\to 0,\,H^5(M)\cong H^6(M)\cong 0, H^7(M)\cong \Q.
\]
As before, $\psi$ is the map $\psi:=\pi_1^*- \Phi\circ\pi_2^*: H^3(L_1)\oplus H^3(L_2)\to H^3(E_1)$. It is easy so see that $\pi_1^*\neq 0$, so in particular, $\psi\neq 0$, and $H^3(M)\cong H^4(M)$ 
are isomorphic to either $0$ or $\Q$. The cohomology ring of $H^*(M)$ is then isomorphic to either $H^*(S^7)$ or $H^*(S^3\times S^4)$, respectively. 
 {In both cases, Lemma \ref{L:RHT} proves that $M$ is rationally elliptic.}

{\textbf{Case 3:}} The $9$-dimensional case. In this case, $G$ has the rational cohomology of $S^3\times S^3$ and, by Lemma \ref{L:H0}, $L_i$ has the rational cohomology of $H^*(S^3\times S^3/S^1)\cong H^*(S^3\times S^2)$.
From the spectral sequence of $S^3\to E_i\to L_i$ we then get $H^*(E_i)\cong H^*(S^3\times S^3\times S^2)$. The Mayer-Vietoris sequence then implies the following:
\[
0\to H^2(M)\to \Q^2\stackrel{\psi_2}{\to} \Q\to H^3(M)\to \Q^2\stackrel{\psi_3}{\to} \Q^2\to H^4(M)\to 0,
\]
\[
0\to H^5(M)\to \Q^2\stackrel{\psi_5}{\to} \Q^2\to H^6(M)\to 0,
\]
\[
H^7(M)\cong \Q, H^8(M)\cong 0, H^9(M)\cong \Q,
\]
where $\psi_j=\pi_1^*-\Phi\circ \pi_2^*: H^j(L_1)\oplus H^j(L_2)\to H^j(E_1)$. By Poincar\'e Duality, $H^2(M)\cong H^7(M)\cong \Q$ and thus $\psi_2$ is surjective. 
In particular, the first exact sequence simplifies as
\[
0\to H^3(M)\to \Q^2\stackrel{\psi_3}{\to} \Q^2\to H^4(M)\to 0.
\]
From the exact sequences above and Poincar\'e Duality, we have $H^3(M)\cong H^4(M)\cong H^5(M)\cong H^6(M)$. Furthermore, since the spectral sequence for the fibration 
$S^3\to E_1\stackrel{\pi_1}{\to} L_1$ has to collapse on the second page, it follows that $\pi_1^*$ is injective in cohomology, which implies that $\psi_3$ is nonzero and therefore $H^j(M)=0$ 
or $\Q$ for $j=3,4,5,6$. If these groups are $0$, then $M$ has the same cohomology ring as $S^2\times S^7$  {(hence it is rationally elliptic by Lemma \ref{L:RHT})}.

Assume finally that $H^j(M)=\Q$ for $j=3,4,5,6$ and let $z_i\in H^i(M)$ denote the generators of the nonzero degrees. In this case, we have that 
$\iota_1^*\oplus \iota_2^*:H^i(M)\to H^i(L_1)\oplus H^i(L_2)$ identifies $H^i(M)$ with $\ker \psi_i$ for $i=2,3,5$. Let $x_i\in H^2(L_i)$ and $y_i\in H^3(L_i)$ be generators, 
and notice that $x_iy_i:=x_i\cup y_i\in H^5(L_i)$ is also a generator. Let $(\iota_1^*\oplus \iota_2^*)(z_2)=(ax_1,bx_2)$ and $(\iota_1^*\oplus \iota_2^*)(z_3)=(cy_1,ey_2)$ 
be generators of $\ker \psi_2$ and $\ker \psi_3$, respectively. The injectivity of $\pi_1^*$ implies that $b,e\neq 0$. In fact, since $\pi_1^*(ax_1)=\Phi\circ \pi^*_2(bx_2)$, if we had $b=0$, then we would have $\pi_1^*(ax_1)=0$, from which $(ax_1, bx_2)=0$ contradicting the injectivity of $(\iota_1^*\oplus \iota_2^*):H^2(M)\to H^2(L_1)\oplus H^2(L_2)$. In a similar fashion we obtain $e\neq 0$. In particular, $(\iota_1\oplus\iota_2)^*(z_2z_3)=(acx_1y_1,be x_2y_2)$ is nonzero, hence $H^5(M)$ is generated by $z_2z_3$. 
By Poincar\'e Duality, the generators of $H^6(M)$, $H^7(M)$, and $H^9(M)$ are then $z_2z_4$, $z_3z_4$, and $z_2z_3z_4$, respectively. Thus the ring structure for $H^*(M)$ 
is either that of $H^*(\mathbb{C}\mathrm{P}^3\times S^3)$ (if $z_2^2\neq 0$, and hence we can take $z_4=z_2^2$) or $H^*(S^2\times S^3\times S^4)$ (if $z_2^2=0$). 
Taking $N\to M$, the $2$-connected circle bundle over $M$, we then get by the Gysin sequence that $H^*(N)$ is isomorphic to either $H^*(S^3\times S^7)$ or $H^*(S^3\times S^3\times S^4)$, 
respectively. Again,  {Lemma \ref{L:RHT} implies that $N$ is rationally elliptic, thus $M$ is rationally elliptic as well.}
\end{proof}

\subsection{Quotients with one non-orbifold point}\label{SS:one}

In this section, we assume that $M$ is a closed, simply connected, positively curved, cohomogeneity-three $G$-manifold with the following properties:
\begin{enumerate}
\item $X := M/G$ is homeomorphic to $S^3$.
\item There is exactly one non-orbifold point $p_1\in X$.
\item There is an edge of weight two.
\end{enumerate}

In this case, a very similar (but simpler) version of Lemma \ref{L:MV} can be obtained.

\begin{lemma}
Let $L_1$ denote the singular orbit corresponding to $p_1$, and let $E_1\to L_1$ denote the normal sphere bundle. Furthermore, let $L_2$ be a principal orbit, and $E_2\to L_2$ 
its normal sphere bundle. Then:
\begin{enumerate}
\item The inclusions $E_i\to M':=M\setminus (L_1\cup L_2)$ induce isomorphism in rational cohomology. In particular, there is an isomorphism of cohomology rings $\Phi:H^*(E_2)\to H^*(E_1)$.
\item There is a Mayer-Vietoris sequence in rational cohomology
\[
\ldots \to H^i(M)\stackrel{\iota_1^*\oplus \iota_2^*}{\to} H^i(L_1)\oplus H^i(L_2)\stackrel{\pi_1^*-\pi_2^*\circ \Phi}{\to} H^i(E_1)\to H^{i+1}(M)\to \ldots
\]
where $\pi_i:E_i\to L_i$ denotes the foot-point projection.
\end{enumerate}
\end{lemma}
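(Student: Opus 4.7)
The plan is to mirror the proof of Lemma~\ref{L:MV} almost verbatim. The only structural difference is that $L_2$ is now a principal rather than a singular orbit, but since $L_2$ has trivial isotropy this only simplifies matters: the unit normal bundle $E_2$ is an honest $S^2$-bundle and the action on $M'$ remains almost free.

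For part (1), I first observe that $G$ acts almost freely on $M' = M \setminus (L_1 \cup L_2)$, since every isotropy in the complement of $L_1$ is finite (orbifold points carry finite isotropy, and $L_2$ is free); each $E_i$ is $G$-invariant with almost-free restricted action. The geometric input I need is that $M'/G = X \setminus \{\pi(L_1),\pi(L_2)\}$ is $S^3$ minus two points, homeomorphic to $(0,1) \times S^2$, with $\pi(E_1)$ and $\pi(E_2)$ sitting inside as $2$-sphere slices that deformation retract onto the whole. I can then reproduce the Borel-construction commutative diagram from the proof of Lemma~\ref{L:MV}: the inclusion $E_i \hookrightarrow M'$ and its thickening $E_i \times EG \hookrightarrow M' \times EG$ project via principal $G$-bundles to $E_i \times_G EG \hookrightarrow M' \times_G EG$, which in turn maps by rational cohomology equivalence to the quotient inclusion $\pi(E_i) \hookrightarrow \pi(M')$. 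Since the latter is a homotopy equivalence, a diagram chase yields that $E_i \hookrightarrow M'$ is a rational cohomology isomorphism, and composing the two produces $\Phi: H^*(E_2) \xrightarrow{\cong} H^*(E_1)$.

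For part (2), I run Mayer-Vietoris on the open cover $M_1 := M \setminus L_2$, $M_2 := M \setminus L_1$, with intersection $M'$ and union $M$. The Slice Theorem gives each $L_i$ a $G$-invariant tubular neighborhood $B_\epsilon(L_i)$ deformation retracting onto $L_i$, so by excision
\[
H^*(M_i, L_i) \cong H^*(M_i, B_\epsilon(L_i)) \cong H^*(M', B_\epsilon(L_i) \setminus L_i) \cong H^*(M', E_i),
\]
and by part (1) this last group vanishes rationally. Hence $H^*(L_i) \cong H^*(M_i)$ and $H^*(E_1) \cong H^*(M')$ in rational cohomology, and substituting these identifications into the standard Mayer-Vietoris sequence for $(M; M_1, M_2)$ produces the claimed long exact sequence. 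The remaining bookkeeping is to identify the middle map as $\pi_1^* - \Phi \circ \pi_2^*$: the composition $E_1 \hookrightarrow M' \hookrightarrow M_1$ factors up to homotopy through $L_1$ as $E_1 \xrightarrow{\pi_1} L_1 \hookrightarrow M_1$, producing the $\pi_1^*$ term; on the $L_2$-side the analogous factorization goes through $E_2$, so one transports back to $E_1$ via $\Phi$, producing $\Phi \circ \pi_2^*$. The main point requiring care is the Borel-construction comparison, but since it is essentially identical to the two-singular-point case I anticipate no new obstacle.
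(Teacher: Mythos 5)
Your proposal is correct and reproduces exactly what the paper intends: the paper does not write a separate proof for this lemma but simply declares it a ``very similar (but simpler) version of Lemma~\ref{L:MV}'', and you have filled in that omitted argument faithfully. You correctly identified the one point that needs re-checking, namely that $G$ still acts almost freely on $M'$: since $p_1$ is the unique non-orbifold point, all other isotropy groups in $M$ are finite (and $L_2$, being principal, is in fact free), so removing $L_1$ already guarantees almost-freeness and the Borel-construction diagram goes through verbatim with $\pi(L_2)$ playing the role previously played by $p_2$. The excision computation $H^*(M_i,L_i)\cong H^*(M',E_i)=0$ and the resulting Mayer--Vietoris sequence are transcribed correctly; your form $\pi_1^*-\Phi\circ\pi_2^*$ for the middle map is the well-typed one, matching Lemma~\ref{L:MV} (the statement's $\pi_1^*-\pi_2^*\circ\Phi$ is presumably a harmless typographical slip in the paper).
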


A corollary similar to Corollary \ref{C:H1=0} also follows:

\begin{corollary}\label{C:H1=0-2}
Assume that $M$ is a $G$-manifold which satisfies properties (1)-(3) at the beginning of this section. Then the singular orbit $L_1$, the principal orbit $L_2$, and their sphere bundles $E_i$ 
have vanishing first rational cohomology. In addition, both $L_i$ and $E_i$ are orientable.
\end{corollary}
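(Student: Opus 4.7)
The plan is to mimic the argument of Corollary \ref{C:H1=0}, keeping track of the fact that now $L_2$ is a principal orbit rather than a singular one. Since the principal isotropy is trivial, we have $\dim L_1=n-4$ and $\dim L_2=n-3$, while $E_1$ and $E_2$ are both closed of dimension $n-1$ (as sphere bundles in $M$). These dimension bounds are the only inputs from the change in setting; everything else will flow from the Mayer--Vietoris sequence of part (2) of the preceding lemma exactly as before.

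First I would feed the top-degree portion of the Mayer--Vietoris sequence with the vanishing $H^{n-1}(L_1)=H^{n-1}(L_2)=0$ (immediate from $\dim L_i<n-1$) to conclude $H^{n-1}(E_1)\cong H^n(M)\cong \Q$, which gives orientability of $E_1$. Invoking the isomorphism $\Phi\colon H^*(E_2)\to H^*(E_1)$ from part (1) then yields orientability of $E_2$. Next, to get $H^1(E_1)=0$, I would apply Poincar\'e duality on $E_1$ to rewrite $H^1(E_1)\cong H^{n-2}(E_1)$, use $H^{n-2}(L_i)=0$ (again from $\dim L_i<n-2$), and note that $H^{n-1}(M)\cong H^1(M)=0$ by simple connectivity; the Mayer--Vietoris segment then pinches $H^{n-2}(E_1)$ between two zeros. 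Transporting through $\Phi$ again gives $H^1(E_2)=0$.

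With $H^1(E_1)=0$ and $H^1(M)=0$ in hand, the segment $H^1(M)\to H^1(L_1)\oplus H^1(L_2)\to H^1(E_1)$ of the Mayer--Vietoris sequence forces $H^1(L_1)=H^1(L_2)=0$. For orientability of the orbits, $L_1$ is handled exactly as in Corollary \ref{C:H1=0}: fixing $p\in L_1$, the orbifold fibration $\nu_p^1L_1\to \nu_p^1L_1/G_p\simeq S^2$ with $\nu_p^1L_1\simeq S^3$ shows that $G_p$ preserves the orientation of $\nu_pL_1$, whence $\nu(L_1)\simeq G\times_{G_p}\nu_pL_1$ is orientable and, combined with the orientability of $TM|_{L_1}$, so is $L_1$. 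For $L_2$, the principal isotropy being trivial gives $L_2\simeq G$, which is orientable as a compact Lie group.

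The only place where the new dimension count could have caused trouble is the vanishing $H^{n-2}(L_2)=0$ used in the Poincar\'e-dual computation of $H^1(E_1)$; this works precisely because $\dim L_2=n-3<n-2$. So the proof really is a routine adaptation, and no step should present a genuine obstacle beyond bookkeeping the dimensions of $L_1$ and $L_2$ correctly in the Mayer--Vietoris sequence.
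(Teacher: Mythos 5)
Your proposal is correct and follows essentially the same argument as the paper: read off $H^{n-1}(E_i)\cong H^n(M)\cong\Q$ and $H^{n-2}(E_i)\cong H^{n-1}(M)\cong 0$ from the Mayer--Vietoris sequence (using the dimension bounds $\dim L_1=n-4$, $\dim L_2=n-3$, and Poincar\'e duality on $E_i$), deduce $H^1(L_i)=0$ from the $i=1$ segment, treat $L_1$ as in Corollary \ref{C:H1=0}, and observe that $L_2\simeq G$ is orientable as a Lie group. The paper's proof is just a terser version of the same bookkeeping.
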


\begin{proof}
Let $n=\dim M$, so that $\dim E_i=n-1$, $\dim L_1=n-4$, and $\dim L_2=n-3$. From the Mayer-Vietoris sequence above, we have $H^{n-1}(E_i)\cong H^n(M)\cong \mathbb{Q}$, 
which implies that $E_i$ is orientable, and $H^1(E_i)\cong H^{n-2}(E_i)\cong H^{n-1}(M)\cong 0$ (here we used Poincar\'e Duality first, and Mayer-Vietoris second). 
This proves one of the statements. Again, from the Mayer-Vietoris sequence, we have that $0\to H^1(L_1)\oplus H^1(L_2)\to 0$ is exact, from which $H^1(L_i)=0$ follows.

It remains to prove that $L_i$ are orientable. The case of $L_2$ follows since $L_2$ is a principal orbit, hence diffeomorphic to a Lie group. The case of $L_1$ follows in the exact same way 
as in Corollary \ref{C:H1=0}.
\end{proof}

\begin{proposition}\label{P:one}
If $M$ is a $G$-manifold which satisfies Properties (1)-(3) at the beginning of Section \ref{SS:one}, then $M$ is rationally elliptic.
\end{proposition}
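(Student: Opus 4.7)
The plan is to adapt the argument of Proposition \ref{P:two} to the current setting, where now $L_2$ is a \emph{principal} orbit (diffeomorphic to $G$) rather than a second singular orbit. By the discussion at the start of Section \ref{SS:two} (combining Proposition \ref{P:groups} with the triviality of the principal isotropy), up to finite cover $G$ is rationally homotopic to $S^3$, $S^1\times S^3$, or $S^3\times S^3$, giving $\dim M\in\{6,7,9\}$. In each case the goal is to compute $H^*(M;\Q)$ as a graded ring via the Mayer--Vietoris sequence of the previous lemma, and then conclude via Lemma \ref{L:RHT}.

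The first step is to eliminate the $7$-dimensional case: here $L_2\cong G$ has the rational cohomology of $S^1\times S^3$, so $H^1(L_2;\Q)\neq 0$, contradicting Corollary \ref{C:H1=0-2}.

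For $\dim M=6$, Lemma \ref{L:H0} gives that $L_1$ has the rational cohomology of $S^3/S^1$, that is, of $S^2$, while $L_2$ has the rational cohomology of $S^3$. The Serre spectral sequence of the $S^3$-bundle $E_1\to L_1$ collapses rationally (since $H^4(L_1;\Q)=0$), giving $H^*(E_1;\Q)\cong H^*(S^2\times S^3)$; Lemma \ref{L:MV}(1) forces the same cohomology on $E_2$, hence the Euler class of the $S^2$-bundle $E_2\to L_2$ must vanish rationally, so $\pi_2^*$ is also injective in rational cohomology. The Mayer--Vietoris maps $\psi_2,\psi_3:\Q\to\Q$ are then both isomorphisms: $\psi_2$ by injectivity of $\pi_1^*$ on $H^2(L_1)$ (since $H^2(L_2)=0$), and $\psi_3$ by injectivity of $\Phi\circ\pi_2^*$ on $H^3(L_2)$ (since $H^3(L_1)=0$). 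The Mayer--Vietoris sequence then forces $H^*(M;\Q)\cong H^*(S^6)$, and Lemma \ref{L:RHT} completes this case.

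For $\dim M=9$ one has $L_2\cong G$ with the rational cohomology of $S^3\times S^3$, and by Lemma \ref{L:H0}, $L_1$ has the rational cohomology of $(S^3\times S^3)/S^1$ for some $S^1$-subgroup of $G$. The main obstacle here is to pin down $H^*(L_1;\Q)$ uniformly in the weights of the embedding. A spectral sequence analysis of the principal bundle $S^1\to S^3\times S^3\to L_1$ resolves this: the rational Euler class must be a generator of $H^2(L_1;\Q)$ (otherwise the degree-one fiber class would survive to $H^1(S^3\times S^3)=0$), and a rank count combined with Poincar\'e duality on the $5$-manifold $L_1$ then forces $L_1$ to have the rational cohomology of $S^2\times S^3$. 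From this point the argument parallels the $6$-dimensional case: the Serre spectral sequence of $S^3\to E_1\to L_1$ again collapses (since $H^4(L_1;\Q)=0$), giving $H^*(E_i;\Q)\cong H^*(S^2\times S^3\times S^3)$ with both $\pi_i^*$ injective. Now $\psi_2:\Q\to\Q$ is an isomorphism, while $\psi_3:\Q^3\to\Q^2$ is surjective with one-dimensional kernel (its image already contains $\Phi\circ\pi_2^*(H^3(L_2))\cong\Q^2$). Mayer--Vietoris then yields $H^i(M;\Q)\cong\Q$ for $i\in\{0,3,6,9\}$ and $0$ otherwise. Finally, graded commutativity gives $a^2=0$ for any $a\in H^3(M;\Q)$, and Poincar\'e duality pins down the full ring structure, identifying $H^*(M;\Q)\cong H^*(S^3\times S^6)$. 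Lemma \ref{L:RHT} (with $n=6$ even and $m=3$ odd) finishes the proof.
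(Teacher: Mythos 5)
Your proof follows the same route as the paper's: eliminate dimension $7$ via Corollary \ref{C:H1=0-2}, then in dimensions $6$ and $9$ compute $H^*(M;\Q)$ from the Mayer--Vietoris sequence, using collapse of the relevant Serre spectral sequences to get injectivity of the $\pi_i^*$, and finally invoke Lemma \ref{L:RHT}. The only difference is that you supply a bit more detail than the paper does (the Gysin-sequence argument pinning down $H^*(L_1)\cong H^*(S^2\times S^3)$ in dimension $9$, and the explicit check that $\pi_2^*$ is injective), but the decomposition, key lemmas, and conclusions coincide.
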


\begin{proof}

As in the proof of Proposition \ref{P:two}, we proceed by the dimension of $M$.

%\vspace{0.2cm}
%
%\noindent
%{\textbf{Case 1:}} The $5$-dimensional case. This case cannot happen since $L_1$ would be equal to $S^1$, hence with nontrivial first cohomology.
%
%\vspace{0.2cm}

%\noindent
{\textbf{Case 1:}} The $6$-dimensional case. In this case, $L_2\simeq G$ has the rational cohomology of $S^3$, $L_1$ has the rational cohomology of $S^2$, and $E_1$ has the cohomology of $S^3\times S^2$. The Mayer-Vietoris sequence then reads
\[
0\to H^2(M)\to \Q\stackrel{\pi_1^*}{\to} \Q\to H^3(M)\to \Q\to\Q\to H^4(M)\to 0,\, H^5(M)\cong 0, H^6(M)\cong \Q.
\]
Since the spactral sequence for the fibration $S^3\to E_1\stackrel{\pi_1}{\to} L_1$ collapses at the second page, it follows that $\pi_1^*$ is injective, and in particular, an isomorphism in $H^2$. 
Hence $H^2(M)\cong H^4(M)\cong 0$, from which $H^3(M)\cong 0$ follows as well. Thus $H^*(M)$ is isomorphic to $H^*(S^6)$, hence $M$ is rationally elliptic by Corollary \ref{L:RHT}.

%\vspace{0.2cm}
%
%\noindent
{\textbf{Case 2:}} The $7$-dimensional case. By Corollary \ref{C:H1=0-2}, this cannot happen since we would have $L_2\simeq G=S^3\times S^1$.

%\vspace{0.2cm}
%
%\noindent
{\textbf{Case 3:}} The $9$-dimensional case. In this case, $G=L_2=S^3\times S^3$, and $L_1$ has the cohomology of $S^3\times S^2$. Moreover, $E_1$, being an $S^3$-bundle over $L_1$, 
must have the cohomology of $S^3\times S^3\times S^2$. The Mayer-Vietoris sequence then reads
\[
0\to H^2(M)\to \Q\stackrel{\pi_1^*}{\to} \Q\to H^3(M)\to \Q^3\stackrel{\pi_1^*-\pi_2^*\circ \Phi}{\to} \Q^2\to H^4(M)\to 0,
\]
\[
0\to H^5(M)\to \Q\stackrel{\pi_1^*}{\to} \Q^2\to H^6(M)\to \Q\stackrel{-\pi_2^*\circ \Phi}{\to} \Q\to H^7(M)\to 0.
\]
Since the spectral sequences for $\pi_1$ and $\pi_2$ degenerate at the second page, $\pi_1^*$ and $\pi_2^*$ are injective. In particular, we have $H^2(M)=H^4(M)=H^5(M)=H^7(M)=0$, 
and $H^6(M)\cong\Q$. By Poincar\'e Duality, $H^3(M)\cong\Q$ as well, and $M$ has the same rational cohomology ring as $S^3\times S^6$. 
By Lemma \ref{L:RHT}, it follows that $M$ is rationally elliptic.
\end{proof}

\section{Double disk bundle decomposition}\label{S:disk bundle}

Following \cite{GH87} we say that a space is \emph{rationally $\Omega$-elliptic} if the total rational homotopy group $\pi_*(\Omega X,\star)\otimes \Q$ is finite dimensional (here $\Omega X$ denotes the space of paths $\gamma:[0,1]\to X$ with $\gamma(0)=\gamma(1)=p_0$ fixed, and $\star$ denotes the constant path on $p_0$). Clearly, a space with trivial fundamental group is rationally elliptic if and only if it is rationally $\Omega$-elliptic. On the other hand, homogeneous spaces are rationally $\Omega$-elliptic even if they are not simply connected \cite[Page 430]{GH87}. Furthermore, if a fibration $E\to B$ has rationally $\Omega$-elliptic fibers, then $E$ is rationally $\Omega$-elliptic if and only if $B$ is.
\begin{proposition}\label{P:disk bundle}
Let $M$ be a $G$-manifold, and let $\pi:M\to M/G$ be the quotient map. Suppose the quotient space $X:=M/G$ has two subspaces $A_*,B_*$ such that $A:=\pi^{-1}(A_*)$
and $B:=\pi^{-1}(B_*)$ are smooth manifolds, and the complement $X\setminus B_{\epsilon}(A_*\cup B_*)$ is an orbifold diffeomorphic to a product $[0,1]\times \mathcal{O}$ for some compact 
$2$-orbifold $\mathcal{O}$. Then $M$ has a double disk bundle decomposition and hence it is rationally elliptic if either $A$ or $B$ is rationally $\Omega$-elliptic.
\end{proposition}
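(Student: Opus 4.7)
The strategy is to exhibit $M$ as a double disk bundle $M = D(A)\cup_E D(B)$ glued along a common sphere bundle $E$, and then invoke the standard rational ellipticity result for such decompositions.

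First, I would use the tubular neighborhood theorem applied to the $G$-invariant smooth submanifold $A$ to identify, for $\epsilon$ sufficiently small, the preimage $\pi^{-1}(\overline{B_\epsilon(A_*)})$ with the closed normal disk bundle $D(A)$ of $A$ in $M$; the same argument yields $\pi^{-1}(\overline{B_\epsilon(B_*)}) = D(B)$. The hypothesis that $X\setminus B_\epsilon(A_*\cup B_*)$ is an orbifold diffeomorphic to $[0,1]\times\mathcal{O}$ ensures that $G$ acts on its preimage with at most finite isotropy, making that preimage a smooth $G$-manifold. The product structure in the quotient lifts $G$-equivariantly, since the level sets of the projection to $[0,1]$ pull back to smooth $G$-invariant hypersurfaces, yielding $\pi^{-1}(X\setminus B_\epsilon(A_*\cup B_*)) \cong [0,1]\times E$ for a single compact smooth $G$-manifold $E$. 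The boundary slices at $t=0$ and $t=1$ are identified with $\partial D(A)$ and $\partial D(B)$, respectively, giving
\[
M = D(A)\cup_E D(B)
\]
as a double disk bundle decomposition in which $E\to A$ and $E\to B$ are sphere bundles.

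Second, I would propagate rational $\Omega$-ellipticity across the sphere bundles. Since spheres are rationally elliptic and simply connected, they are rationally $\Omega$-elliptic. By the fibration principle recalled at the start of this section, $A$ being rationally $\Omega$-elliptic forces $E$ to be rationally $\Omega$-elliptic, which in turn forces $B$ to be rationally $\Omega$-elliptic via the sphere bundle $E\to B$; the analogous propagation works starting from $B$. Thus under either hypothesis, both $A$ and $B$ are rationally $\Omega$-elliptic.

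Finally, $M = D(A)\cup_E D(B)$ is the homotopy pushout of $A\leftarrow E\rightarrow B$. The argument of Grove and Halperin in \cite{GH87}, originally phrased in the cohomogeneity-one setting but depending only on the double disk bundle structure and the rational $\Omega$-ellipticity of the base spaces, yields that $M$ is rationally $\Omega$-elliptic; since $M$ is simply connected this is equivalent to rational ellipticity of $M$. The main technical delicacy is the $G$-equivariant lifting of the orbifold product decomposition in the first step, where one must verify that the orbifold chart on $[0,1]\times\mathcal{O}$ pulls back to a genuinely smooth $G$-equivariant trivialization; once the double disk bundle structure is in hand, the ellipticity conclusion is a routine application of the Sullivan model techniques from \cite{GH87}.
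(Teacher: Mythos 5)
Your strategy coincides with the paper's: realize $M$ as a double disk bundle over $A$ and $B$ glued along a common sphere bundle, then invoke \cite[Corollary 6.1]{GH87}. The invocation of the fibration principle to propagate rational $\Omega$-ellipticity is correct, though slightly circuitous — the paper goes directly via the $\Omega$-ellipticity of the common boundary $\partial B_\epsilon(A)\simeq\partial B_\epsilon(B)$, which is equivalent to that of either base by the same sphere-bundle argument.

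The one place where your proposal stops short is exactly the step you flag as the ``main technical delicacy'': you assert that the product structure on $X\setminus B_\epsilon(A_*\cup B_*)\cong[0,1]\times\mathcal{O}$ lifts to a $G$-equivariant trivialization $\pi^{-1}(X\setminus B_\epsilon(A_*\cup B_*))\cong[0,1]\times E$, but knowing that each slice $\{t\}\times\mathcal{O}$ pulls back to a smooth $G$-invariant hypersurface does not by itself produce a product structure — one still needs a canonical way to identify different slices. The paper supplies exactly this missing piece: it pushes forward the coordinate vector field $\partial/\partial t$ through the orbifold diffeomorphism $\varphi$ to get a vector field $T$ on $X\setminus B_\epsilon(A_*\cup B_*)$, lifts $T$ to the horizontal vector field $\bar T$ on $M\setminus B_\epsilon(A\cup B)$, and uses the time-one flow $\Phi$ of $\bar T$ as the gluing map $\partial B_\epsilon(A)\to\partial B_\epsilon(B)$. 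This flow is automatically $G$-equivariant (horizontal lifts are $G$-invariant) and yields $M\simeq B_\epsilon(A)\cup_\Phi B_\epsilon(B)$ without ever having to prove the product decomposition you posit upstairs. So your proof is sound in outline, but you should replace the asserted equivariant trivialization with the horizontal flow construction to close the gap.
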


\begin{proof}
Fixing an orbifold diffeomorphism $\varphi:[0,1]\times \mathcal{O}\to X\setminus B_{\epsilon}(A_*\cup B_*)$, let $\partial \over \partial t$ denote the vector field in $[0,1]\times \mathcal{O}$ 
parallel to $[0,1]$, let $T=\varphi_*\left({\partial\over \partial t}\right)$ and finally let $\bar{T}$ denote the horizontal vector field on $M\setminus B_{\epsilon}(A\cup B)$ projecting to $T$. 
Then the flow $\Phi$ of $\bar{T}$ at time $1$ takes $\partial B_{\epsilon}(A)$ to $\partial B_{\epsilon}(B)$, and $M$ can be described as
\[
M=B_{\epsilon}(A)\cup \left(M\setminus B_{\epsilon}(A\cup B)\right)\cup B_{\epsilon}(B)\simeq B_{\epsilon}(A)\cup_{\Phi}B_{\epsilon}(B).
\]
By \cite[Corollary 6.1]{GH87}, it then follows that $M$ is rationally elliptic if and only if $\partial B_{\epsilon}(A)\simeq \partial B_{\epsilon}(B)$ is rationally $\Omega$-elliptic, which is equivalent to one of $A$, or $B$ being rationally $\Omega$-elliptic.
\end{proof}

\part{Applying the results of Part \ref{P:techniques} to the quotient space $M/G$}\label{P:applications}

The main goal of the second part of this paper is to prove Theorem \ref{main-thm:cohomogeneity three}. We begin with identifying possible configurations for the singular strata 
of the quotient space $M/G$. Next, for each configuration, we apply the appropriate technique from Part \ref{P:techniques} to prove that M is rationally elliptic.

\section{The structure of the singular strata of the quotient space}\label{S:singular strata}

In this section, we assume that $M$ is a closed, simply connected, positively curved Riemannian manifold and $G$ is a compact, connected Lie group acting on $M$ by isometries 
with cohomogeneity three and $\partial(M/G)=\emptyset$. Call $X=M/G$ the quotient Alexandrov space. This section is devoted to classifying possible singular strata configurations of $X$.

\begin{proposition}\label{P:1-dim stratum}
Let $X$ be as above, and assume it is not an orbifold. Then the possible singular strata correspond to the following labeled graphs (cf. Definition \ref{D:lab-graph}):

\begin{longtable}{ |p{4.5cm}|p{4.5cm}|p{4.5cm}|}
\hline
$1.$
\begin{center}
\begin{tikzpicture}
\filldraw[black] (0,0) circle (2pt);
\end{tikzpicture}
\end{center}
&
$2.$
\begin{center}
\begin{tikzpicture}
\filldraw[black] (-1.2,0) circle (2pt);
\filldraw[black] (0.2,0) circle (2pt);
\end{tikzpicture}
\end{center}
&
$3.$
\begin{center}
\begin{tikzpicture}
\filldraw[black] (-0.5,0) circle (2pt);
\filldraw[black] (-1,-1) circle (2pt);
\filldraw[black] (0,-1) circle (2pt);
\end{tikzpicture}
\end{center}\vspace{-0.5cm}\\ \hline

$4.$

\begin{center}
\begin{tikzpicture}
\draw (-0.8,1) -- (0.8,1);
\filldraw[black] (-0.8,1) circle (2pt);
\filldraw[black] (0.8,1) circle (2pt);
\filldraw[black] (0,1.52) circle (0pt) node[anchor=north] {$k$};
\end{tikzpicture}
\end{center}\vspace{-0.5cm}
&

$5.$
\begin{center}
\begin{tikzpicture}
\draw (-0.8,1) -- (0.8,1);
\draw (0,-0.2) -- (0.8,1);
\filldraw[black] (0,-0.2) circle (2pt);
\filldraw[black] (-0.8,1) circle (2pt);
\filldraw[black] (0.8,1) circle (2pt);
\filldraw[black] (0,1.52) circle (0pt) node[anchor=north] {$k$};
\filldraw[black] (0.26,0.32) circle (0pt) node[anchor=west] {$\ell$};
\end{tikzpicture}
\end{center}\vspace{-0.5cm}
&

$6.$

\begin{center}
\begin{tikzpicture}
\draw (-1.9,1) -- (-1,0.3) node[midway, above] {$2$};
\draw (-1,0.3) -- (0.1,0.3) node[midway, above] {$k$};
\draw  (-1.9,-0.4) -- (-1,0.3) node[midway, below] {$2$};
\filldraw[white] (-1,0.3) circle (2pt);
\draw[black] (-1,0.3) circle (2pt);
\filldraw[black] (-1.9,1) circle (2pt); 
\filldraw[black] (0.1,0.3) circle (2pt);
\filldraw[black] (-1.9,-0.4) circle (2pt);
\end{tikzpicture}
\end{center}\vspace{-0.5cm} \\ \hline

$7.$

\begin{center}
\begin{tikzpicture}
\draw (0,1) ellipse [x radius=22pt, y radius=6pt] node[yshift=0.2cm, above] {$k$};
\filldraw[black] (0.8,1) circle (2pt);
\end{tikzpicture}
\end{center}
&
$8.$

\begin{center}
\begin{tikzpicture}
\draw (0,1) ellipse [x radius=22pt, y radius=6pt];
\filldraw[black] (-0.8,1) circle (2pt);
\filldraw[black] (0.8,1) circle (2pt);
\filldraw[black] (0,1.74) circle (0pt) node[anchor=north] {$k$};
\filldraw[black] (0,0.26) circle (0pt) node[anchor=south] {$\ell$};
\end{tikzpicture}
\end{center}

&
$9.$

\begin{center}
\begin{tikzpicture}
\draw (-0.8,1) -- (0.8,1);
\draw (0,-0.2) -- (0.8,1);
\draw (0,-0.2) -- (-0.8,1);
\filldraw[black] (0,-0.2) circle (2pt);
\filldraw[black] (-0.8,1) circle (2pt);
\filldraw[black] (0.8,1) circle (2pt);
\filldraw[black] (0,1.52) circle (0pt) node[anchor=north] {$k$};
\filldraw[black] (0.26,0.32) circle (0pt) node[anchor=west] {$q$};
\filldraw[black] (-0.28,0.32) circle (0pt) node[anchor=east] {$\ell$};
\end{tikzpicture}
\end{center}\vspace{-0.2cm}\\ \hline

$10.$

\begin{center}
\begin{tikzpicture}
\draw (-0.6,1) -- (0.8,1) node[midway, above] {$k$};
\draw (-1,1) ellipse [x radius=13pt, y radius=6pt] node[yshift=0.2cm, above] {$2$};
\filldraw[black] (-0.54,1) circle (2pt);
\filldraw[black] (0.8,1) circle (2pt);
\end{tikzpicture}\vspace{-0.5cm}
\end{center}
&
$11.$

\begin{center}
\begin{tikzpicture}
\draw (0,-0.2) -- (-0.78,1);
\filldraw[black] (-0.28,0.23) circle (0pt) node[anchor=east] {$\ell$};
\filldraw[black] (0,1.7) circle (0pt) node[anchor=north] {$2$};
\filldraw[black] (0,0.3) circle (0pt) node[anchor=south] {$k$};
\filldraw[black] (0.78,1) circle (2pt);
\filldraw[white] (-0.78,1) circle (2pt);
\draw[black] (-0.78,1) circle (2pt);
\filldraw[black] (0,-0.2) circle (2pt);
\draw (0,1) ellipse [x radius=22pt, y radius=6pt];
\filldraw[white] (-0.78,1) circle (2pt);
\draw[black] (-0.78,1) circle (2pt);
\end{tikzpicture}\vspace{-0.5cm}
\end{center}

&
$12.$

\begin{center}
\begin{tikzpicture}
\hspace{-0.2cm}\draw (0,1) ellipse [x radius=26pt, y radius=10pt] node[yshift=0.3cm, above] {$2$} node[yshift=-0.27cm, below] {$\ell$};
\draw (-0.9,1) -- (0.9,1) node[midway, yshift=-0.23cm, above]{$k$};
\filldraw[white] (-.9,1) circle (2pt);
\draw[black] (-0.9,1) circle (2pt);
\filldraw[black] (0.9,1) circle (2pt);
\end{tikzpicture}\vspace{-0.5cm}
\end{center}
\vspace{-1cm}\\ \hline

$13.$

\begin{center}
\begin{tikzpicture}
\hspace{-0.2cm}\draw (0,1) ellipse [x radius=26pt, y radius=10pt] node[yshift=0.3cm, above] {$2$} node[yshift=-0.27cm, below] {$\ell$};
\draw (-0.9,1) -- (0.9,1) node[midway, yshift=-0.23cm, above]{$k$};
\filldraw[black] (-0.9,1) circle (2pt);
\filldraw[black] (0.9,1) circle (2pt);
\end{tikzpicture}\vspace{-0.5cm}
\end{center}\vspace{-0.5cm}
&
$14.$

\begin{center}
\begin{tikzpicture}
\draw (0,-0.4) -- (0.78,1);
\draw (0,-0.4) -- (-0.78,1);
\filldraw[black] (0.28,0.23) circle (0pt) node[anchor=west] {$\ell$};
\filldraw[black] (-0.28,0.23) circle (0pt) node[anchor=east] {$k$};
\filldraw[black] (0,1.7) circle (0pt) node[anchor=north] {$r$};
\filldraw[black] (0,0.3) circle (0pt) node[anchor=south] {$q$};
\filldraw[black] (0.78,1) circle (2pt);
\filldraw[black] (0,-0.4) circle (2pt);
\draw (0,1) ellipse [x radius=22pt, y radius=6pt];
\filldraw[white] (0.78,1) circle (2pt);
\draw[black] (0.78,1) circle (2pt);
\filldraw[white] (-0.78,1) circle (2pt) ;
\draw[black] (-0.78,1) circle (2pt) ;
\end{tikzpicture}
\end{center}\vspace{-0.5cm}

&
$15.$

\begin{center}
\begin{tikzpicture}
\draw (0,-0.4) -- (0.78,1);
\draw (0,-0.4) -- (-0.78,1);
\filldraw[black] (0.28,0.23) circle (0pt) node[anchor=west] {$\ell$};
\filldraw[black] (-0.28,0.23) circle (0pt) node[anchor=east] {$k$};
\filldraw[black] (0,1.7) circle (0pt) node[anchor=north] {$2$};
\filldraw[black] (0,0.3) circle (0pt) node[anchor=south] {$q$};
\filldraw[black] (0.78,1) circle (2pt);
\filldraw[black] (0,-0.4) circle (2pt);
\draw (0,1) ellipse [x radius=22pt, y radius=6pt];
\filldraw[white] (-0.78,1) circle (2pt) ;
\draw[black] (-0.78,1) circle (2pt) ;
\end{tikzpicture}
\end{center}\vspace{-0.5cm}

\\ \hline

$16.$

\begin{center}
\begin{tikzpicture}
\draw (-0.9,-0.8) -- (0.9,-0.8);
\draw (-0.9,-0.8) -- (0,1.1);
\draw (0.9,-0.8) -- (0,1.1);
\draw (-0.9,-0.8) -- (0,-0.1);
\draw (0.9,-0.8) -- (0,-0.1)  node[midway, above]{$r$};
\draw (0,-0.1) -- (0,1.1);
\filldraw[black] (0,1.1) circle (2pt);
\filldraw[white] (-0.9,-0.8) circle (2pt);
\draw[black] (-0.9,-0.8) circle (2pt);
\filldraw[white] (0.9,-0.8) circle (2pt);
\draw[black] (0.9,-0.8) circle (2pt);
\filldraw[white] (0,-0.1) circle (2pt);
\draw[black] (0,-0.1) circle (2pt);
\filldraw[black] (-0.6,0.6) circle (0pt) node[anchor=north] {$2$};
\filldraw[black] (0.1,0.6) circle (0pt) node[anchor=north] {$k$};
\filldraw[black] (0.6,0.6) circle (0pt) node[anchor=north] {$\ell$};
\filldraw[black] (0,-0.74) circle (0pt) node[anchor=north] {$s$};
\filldraw[black] (-0.4,0.02) circle (0pt) node[anchor=north] {$q$};
\end{tikzpicture}
\end{center}\vspace{-0.5cm} 

&
$17.$

\begin{center}
\begin{tikzpicture}
\draw (-0.8,0.7) -- (0.8,0.7);
\filldraw[black] (-0.8,0.7) circle (2pt);
\filldraw[black] (0.8,0.7) circle (2pt);
\filldraw[black] (0,-0.4) circle (2pt);
\filldraw[black] (0,0.64) circle (0pt) node[anchor=south] {$k$};
\end{tikzpicture}
\end{center}

&
$18.$

\begin{center}
\begin{tikzpicture}
\draw (0,1) ellipse [x radius=22pt, y radius=6pt] node[yshift=0.2cm, above] {$2$};
\filldraw[black] (0.8,1) circle (2pt);
\filldraw[black] (0,-0.1) circle (2pt);
%\hspace{6.8cm}\draw (-0.8,0.7) -- (0.8,0.7);
\vspace{0.06cm}
\end{tikzpicture}
\end{center} \vspace{-0.5cm}\\ 
\hline
\end{longtable}
\end{proposition}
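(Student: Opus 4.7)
My plan is to enumerate all labeled graphs $\Gamma$ satisfying the constraints extracted from Section \ref{S:preliminaries}, by case analysis on the first Betti number $\beta_1(\Gamma)$ and the number of connected components. The constraints to feed into the enumeration are: (a) every vertex has valency at most 3, and every hollow vertex has valency exactly 3 (from Lemma \ref{L:codim2} together with the proof of Lemma \ref{lem:zero-dimensional}), so any vertex of valency $\leq 2$ is full; (b) $\Gamma$ contains at most three small points and at most three vertices of valency $\leq 2$ (Lemma \ref{lem:zero-dimensional}); (c) every cycle of $\Gamma$ contains at least one full vertex (Lemma \ref{L:no-only-orb}); (d) no two disjoint cycles exist in $\Gamma$ (Corollary following Proposition \ref{P:double-branched}); and (e) $\Gamma$ contains at least one full vertex, since $X$ is not an orbifold by hypothesis.

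\textbf{Forest case} ($\beta_1 = 0$): A handshake-lemma computation on subcubic trees shows that each tree component has at most 4 vertices (else there are too many leaves for constraint (b)), and at most one tree component can carry an edge while satisfying (b). This yields the configurations with no edges (graphs 1, 2, 3), a single edge (graph 4), a path of two edges (graph 5), a Y-shape (graph 6, whose central degree-3 vertex must be hollow of type (1.b) with weights $(2,2,k)$ since the three leaves already saturate the small-point bound), and a single edge plus an isolated vertex (graph 17).

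\textbf{Single cycle case} ($\beta_1 = 1$): I enumerate by the length of the cycle (loop, digon, triangle, or longer) and the attached trees. The main tool is the $k$-fold branched cover of Proposition \ref{P:double-branched} along the cycle, which unfolds the edges of weight $k$ on the cycle and multiplies every off-cycle small point by a factor of $k$; together with the bound of three small points in the cover, this enforces tight restrictions such as forcing the loop in graph 18 to have weight exactly 2 (so that the cover of the off-cycle full vertex still yields at most three small points). This step produces graphs 7--13 and graph 18.

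\textbf{Multiple cycle case} ($\beta_1 \geq 2$): Constraint (d) forces all cycles to share at least one vertex, and the valency-3 bound further limits how they can interact. A direct case analysis on how two or three cycles can be glued under these bounds yields the configurations with two cycles sharing an edge (graphs 14 and 15) and, for $\beta_1 = 3$, the tetrahedral $K_4$-configuration (graph 16).

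The main obstacle is twofold. First, the single-cycle enumeration is combinatorially rich and requires careful bookkeeping: for each candidate cycle length and weight, and each possible collection of attached trees, I must verify that the small-point count in the original space \emph{and} in every admissible branched cover remains within the bound of three, so as to rule out every configuration not on the list. Second, for each topological type I must translate the bounds into the precise labeling restrictions -- forcing hollow-vertex weights into the patterns $\{(2,2,k),(2,3,3),(2,3,4),(2,3,5)\}$ dictated by Lemma \ref{L:codim2}, and ensuring the contribution of any type (1.c), (1.d), (1.e) hollow vertex to the small-point count is compatible with the other vertices. Completing this dual enumeration -- topological shape followed by allowable labeling -- is the heart of the argument.
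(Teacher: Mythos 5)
Your overall strategy---casing on $\beta_1(\Gamma)$ and bounding small points via branched covers---matches the paper's. However, there are genuine gaps. First, your claim that ``constraint (d) forces all cycles to share at least one vertex'' is false: pairwise intersection of cycles (which is what (d) gives) does not yield a common vertex, and the four $3$-cycles in $K_4$ (which is precisely graph $16$) are a counterexample. Consequently the multi-cycle case requires the more delicate argument the paper actually uses: $V_1 = 0$, $V_2 \leq 1$, and $\beta_1 \leq 3$ are each established via carefully chosen iterated branched covers (the last bound by fixing a full valency-$3$ vertex $w$, finding $\beta_1 - 2$ independent cycles avoiding $w$, and observing that the iterated double covers produce $2^{\beta_1 - 2}$ small points above $w$). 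Your outline simply appeals to ``a direct case analysis on how two or three cycles can be glued,'' which presupposes $\beta_1 \leq 3$ without justification.

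Second, graphs $12$ and $13$ are theta graphs (two vertices joined by three edges, so $\beta_1 = 2$), not single-cycle graphs; the single-cycle enumeration as you describe it would not produce them. More importantly, you explicitly defer the derivation of the weight labelings, and this is where most of the paper's work lies. For instance, for graph $10$ the paper rules out curvature $\kappa = 1$ at the valency-$3$ vertex by Lemma~\ref{L:no-only-orb} (otherwise the loop would be a quotient geodesic through orbifold points only) and rules out $S^2_{2,3,3}(4)$ by a triple branched cover producing four small points; similar tailored arguments are needed for graphs $11$--$16$ and $18$. Without carrying out this dual enumeration---topological shape, then admissible labeling---the proposal remains a plan rather than a proof.
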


\begin{proof}
First suppose that the singular stratum $\Gamma$ of $X$ consists of isolated points: since these are small points, there must be at most three of them, and these are graphs 1, 2, 3.

Next, suppose that $\Gamma$ does not have any isolated points. Let $V$, $E$, and $C$ denote the number of vertices, edges, and independent cycles of $\Gamma$ (i.e. generators of $H_1(\Gamma,\Z)$), respectively.  The fact that for a simplicial complex $K$, $\sum_i(-1)^i\#\{i\textrm{-simplices}\}=\chi(K)=\sum_i(-1)^i\dim H_i(K)$ implies, in the case $K=\Gamma$, that:
\begin{equation}\label{eq:vecp}
V-E+C=\comp,
\end{equation} 
where $\comp=\dim H_0(\Sigma)$ denotes the number of connected components of $\Gamma$. Furthermore, the sum of the valencies of all the vertices equals twice the number of the edges 
(that is, $2E=V_1+2V_2+3V_3$ where $V_i$ is the number of vertices of valency $i$), thus equation \eqref{eq:vecp} becomes
\begin{equation}\label{eq:vcp}
\frac{V_1}{2}-\frac{V_3}{2}+C=\comp.
\end{equation}
Hence $V_1-V_3$ is even. Moreover, vertices of valency one or two are small points and thus $V_1+V_2\leq 3$. We break the proof into cases.

{\textbf{Case 1:}}  {$\Gamma$ does not have any cycles}. In this case, Equation \eqref{eq:vcp}, together with the facts that $V_1\leq 3$
and $b_0(\Sigma)\geq 1$, implies that 
$$2\leq V_3+2b_0(\Sigma)=V_1\leq 3.$$
Therefore, $b_0(\Sigma)=1$. Moreover, either $V_1=2$ and $V_3=0$, or $V_1=3$ and $V_3=1$. Since $V_1+V_2\leq 3$, by Equation \eqref{eq:vecp}, 
it follows that there are three possibilities:
\begin{itemize}
\item $V_1=2$, $V_2=0$, $V_3=0$, $\comp=1$, and $E=1$.  {This gives graph 4 in the list}.
\item $V_1=2$, $V_2=1$, $V_3=0$, $\comp=1$, and $E=2$.  {This gives graph 5}.
\item $V_1=3$, $V_2=0$, $V_3=1$, $\comp=1$, and $E=3$.  {In this case, the vertex of valency three cannot be a small point and thus the multiplicities of the edges coming out of it must be $(2,2,k)$. 
This gives graph 6.}
\end{itemize} 

{\textbf{Case 2:}}  {$\Gamma$ has} exactly one cycle. Since a vertex of valency one cannot lie on the singular cycle, 
after passing to the two-fold branched cover of $X$ along the singular cycle, we will have a positively curved Alexandrov space with $2V_1$ vertices of valency one,
and hence at least $2V_1+V_2$ small points. Therefore, $2V_1+V_2\leq 3$. Equation \eqref{eq:vcp} then implies that $2> V_1-V_3=2(\comp-1)$. Thus we have $\comp=1$ and $V_1=V_3$. 
Together with Equation \eqref{eq:vecp}, we get the following five possibilities:
\begin{itemize}
\item $V_1=V_3=0$ and $V_2=\comp=E=1$.  {This gives graph 7}.
\item $V_1=V_3=0$, $V_2=E=2$, and $\comp=1$.  {This gives graph 8}.
\item $V_1=V_3=0$, $V_2=E=3$, and $\comp=1$.  {This gives graph 9}.
\item $V_1=V_3=1$, $V_2=0$, $\comp=1$, and $E=2$.  {In this case, the (unlabeled) graph is the same as in graph 10, and the vertex of valency three must then have space of directions $S^2_{2,2,k}(\kappa)$ or $S^2_{2,3,3}(\kappa)$ ($\kappa=$1 or 4). The value $\kappa=1$ cannot occur for otherwise the cycle would be a quotient geodesic only crossing orbifold points, contradicting Lemma \ref{L:no-only-orb}. Furthemore, the space of directions $S^2_{2,3,3}(4)$ cannot occur either, for otherwise the triple branched cover of $X$ along the loop would have 4 small points, 
contradicting Hsiang-Kleiner. Thus only $S^2_{2,2,k}(4)$ is allowed, which gives graph 10.}
\item $V_1=V_3=1$, $V_2=1$, $\comp=1$, and $E=3$.  {In this case, the unlabaled graph is the same as graph 11, the vertex of valency three has space of directions $S^2_{p,k,\ell}(\kappa)$, 
and the loop consists of two edges of weights $p$ and $q$. First notice that $\kappa=1$ for otherwise the two-fold branched cover along the loop will have four small points. 
Similarly, if $\min(p,k)>2$ then the triple cover of $X$ along the loop would have four small points. Therefore, the only possibility is graph 11}. 
\end{itemize}

{\textbf{Case 3:}} $\Gamma$ has two independent cycles $c_1$ and $c_2$. In this case, after passing to the two-fold branched cover $Y:=X_2(c_1)$ 
and then either to the universal cover $\widetilde{Y}$ or to the two-fold branched cover of $Y$ along $b_X^{-1}(c_2\setminus c_1)$, 
we get a positively curved Alexandrov space with at least $4V_1$ vertices of valency one. Therefore, $V_1=0$ and hence $V_3=4-2\comp\leq 2$.
Moreover, given a vertex $v$ of valency two, there is a singular cycle that does not pass through $v$. After repeated application of passing to the two-fold branched covers
along such singular cycles or their preimages, we get a positively curved Alexandrov space with at least $2V_2$ small points and thus $V_2\leq 1$. 
Equation \eqref{eq:vecp} then yields the following possibilities:
\begin{itemize}
\item $V_1=0$, $V_2=0$, $V_3=2$, $\comp=1$, and $E=3$.  {The two vertices thus have space of directions $S^2_{k,l,m}(\kappa)$ with $\kappa=1$ or $4$. Since $\kappa$ cannot be 1  for both (otherwise they would both be orbifold points, contradicting Lemma \ref{L:no-only-orb}), one must have at least a vertex with $\kappa=4$. Depending on the value of $\kappa$ for the other vertex, 
we have graphs 12 and 13}.

\item $V_1=0$, $V_2=1$, $V_3=2$, $\comp=1$, and $E=4$.  {In this case, the (unlabeled) graph must be the same as graph 14. The two vertices thus have space of directions 
$S^2_{r,q,k}(\kappa_1)$ and $S^2_{r,q,\ell}(\kappa_2)$ respectively, with $\kappa_i=1$ or $4$, but one cannot have $\kappa_1=\kappa_2=4$ because  doubling $X$ along the length-two cycle would produce an Alexandrov space with four small points, contradicting Hsiang-Kleiner. Therefore, we can assume $\kappa_1=1$. If $\kappa_2=1$ as well, we get labeled graph 14. 
Finally, if $\kappa_2=4$, at least one of the weights in the length-two cycle $c$ have to be two for otherwise the triple branched cover $X_3(c)$ would have once again four small points. 
Therefore, the only labeled graph in this case is graph 15.}
\end{itemize}

{\textbf{Case 4:}} $\Gamma$ has $C\geq 3$ independent cycles. As in Case 3, we have $V_1=0$.
Suppose there exists a vertex $v$ of valency two. Then there are $C-1\geq 2$ independent cycles that do not pass through $v$. Choose two of them, and call them $c_1, c_2$. 
Taking the iterated two-fold branched cover along $c_1$ and $c_2$, we get an Alexandrov space with at least four small points given by the preimages of $v$, contradicting Hsiang-Kleiner. 
Hence, $V_2=0$. Since by assumption, $X$ is not an orbifold, we conclude that $X$ contains at least one non-orbifold point of valency three.

For any vertex $w$ of valency three, we claim that we can find at least $C-2$ independent cycles that do not pass through $w$. In fact, choose a set $\mathcal{B}$ of $C$ independent cycles 
with the minimum number of elements passing through $w$, and assume by contradiction that there are $c_1, c_2, c_3\in \mathcal B$ all passing though $w$. 
Then, at least one of $c_i\pm c_j$ will not pass through $w$ anymore, and upon substituting $c_i$ with $c_i\pm c_j$ we will obtain a new independent set with strictly less cycles through $w$, 
a contradiction. Fixing a non-orbifold point $w$ of valency three and taking iterated double covers over the $(C-2)$ cycles as before, the preimage of $w$ then consists of $2^{C-2}$ 
small points hence $2^{C-2}<3$, from which $C=3$ and there is at least one cycle $c$ not passing through $w$. Assume, by contradiction, that  there is a second non-orbifold point $w'$ 
with valency three. In this case, the double branched cover $X_2(c)$ would be a positively curved Alexandrov space with at least three small points (the two preimages of $w$, 
and at least one preimage of $w'$) whose singular graph $\Gamma_2(c)$ has at least two independent cycles $\hat{c}_1,\, \hat{c}_2$. We claim that at least one cycle 
$\hat{c}$ of $\Gamma_2(c)$ does not pass through all small points (call them $\hat{w}_1,\hat{w}_2, \hat{w}_3$): if one of $\hat{c}_1,\, \hat{c}_2$ does not pass through all three points 
we are done, so assume that both $\hat{c}_1$ and $\hat{c}_2$ pass through all three of them, and orient $\hat{c}_1, \, \hat{c}_2$ in such a way that they meet the three points in increasing order. 
For $i=1,2$, let $\hat{d}_i$  be the segment of $\hat{c}_i$ from $\hat{w}_1$ to $\hat{w}_2$, not passing through $\hat{w}_3$. 
 Without loss of generality, we may assume that $\hat{d}_1$ and $\hat{d}_2$ do not coincide. Then the concatenation $\hat{c}:=\hat{d}_1*\hat{d}_2^{-1}$ 
is a cycle in $\Gamma_2(c)$ not passing through $\hat{w}_3$, as claimed. Taking the double branched cover over $\hat{c}$ would result in a positively curved 
Alexandrov space with at least four small points, a contradiction, and thus there must be a unique non-orbifold point with valency three.

By Equation \eqref{eq:vecp} and \eqref{eq:vcp}, a graph with $C=3$ (hence $V_1=V_2=0$) and $V_3\geq 1$ must have:
\begin{itemize}
\item $V_1=V_2=0$, $V_3=4$, $\comp=1$, and $E=6$.
\end{itemize}
This leads to the following unlabeled graphs:
\\

\begin{tikzpicture}
\hspace{1.8cm}\draw (-1,-0.2) -- (1,-0.2);
\draw (-1,-0.2) -- (0,1.7);
\draw (1,-0.2) -- (0,1.7);
\draw (-1,-0.2) -- (0,0.4);
\draw (1,-0.2) -- (0,0.4);
\draw (0,0.4) -- (0,1.7);
\filldraw[black] (0,1.68) circle (2pt) node[anchor=south] {};
\filldraw[black] (-1,-0.2) circle (2pt) node[anchor=east] {};
\filldraw[black] (1,-0.2) circle (2pt) node[anchor=west] {};
\filldraw[black] (0,0.4) circle (2pt) node[anchor=north] {};
\hspace{4.84cm}\draw (0,0.8) -- (0.8,1.8);
\draw (0,0.8) -- (-0.8,1.8);
\filldraw[black] (0.8,1.8) circle (2pt) node[anchor=west] {};
\filldraw[black] (-0.8,1.8) circle (2pt) node[anchor=east] {};
\filldraw[black] (0,0.8) circle (2pt) node[anchor=north] {};
\filldraw[black] (0,-0.1) circle (2pt) node[anchor=north] {};
\draw (0,1.8) ellipse [x radius=22pt, y radius=6pt];
\draw (0,0.8) -- (0,-0.1);
\draw (0,-0.3) ellipse [x radius=22pt, y radius=6pt];
\hspace{4.84cm}\draw (0,1.8) ellipse [x radius=22pt, y radius=6pt];
\draw (0,-0.3) ellipse [x radius=22pt, y radius=6pt];
\draw (0.78,-0.3) -- (0.78,1.8);
\draw (-0.78,-0.3) -- (-0.78,1.8);
\filldraw[black] (0.78,-0.3) circle (2pt) node[anchor=west] {};
\filldraw[black] (0.78,1.8) circle (2pt) node[anchor=east] {};
\filldraw[black] (-0.78,.-0.3) circle (2pt) node[anchor=north] {};
\filldraw[black] (-0.78,1.8) circle (2pt) node[anchor=north] {};
\end{tikzpicture}

\vspace{0.28cm}

\noindent The second and the third configurations cannot occur since after passing to the two-fold branched cover of $X$ along any singular cycle 
that does not contain the non-orbifold point, we get a positively curved Alexandrov space with two non-orbifold points of valency three and at least three independent cycles,
which contradicts the discussion in the previous paragraph. As for the first configuration, by the discussion above there must be at most one vertex of curvature 4. Thus the only possibility is graph 16.
%Also they cannot all be equal to 3 for otherwise they would together form a quotient geodesic only crossing orbifold points. The last possibility is that  by the discussion in Section \ref{SS:points}, the space of directions of $X$ 
%at the non-orbifold vertex is of the form $S^2_{2,k,\ell}(4)$.
%{Moreover, the non-orbifold point can be either the interior vertex or any of the three exterior vertices of the pyramid.}
%These lead to the two possible labelings that appear in the statement of the proposition.

\vspace{0.2cm}

It remains to discuss the case in which $\Gamma$ consists of both $1$-dimensional singular strata and isolated points. Let $V_0$ denote the number of isolated points. 
Note that, if $V_3^s$ denotes the number of non-orbifold points of valency three, then $V_0+V_1+V_2+V_3^s\leq 3$. Note, moreover, that if $X$ has more than one singular cycle, then $V_0=0$ by the two-fold branched cover arguments. 
Therefore, the number of singular cycles of $X$ is at most one. 

 If $X$ has no cycles, then $V_1\geq 2$ by Equation \eqref{eq:vcp} but this is possible only if $V_0=1$, $V_1=2$, and $V_2=V_3^s=0$. Therefore, the only possibility is graph 17.

Furthermore, if $X$ has exactly one singular cycle, then every vertex of valency two lies on the cycle and hence by passing to the two-fold branched cover of $X$ along the singular cycle, 
we get $2V_0+2V_1+V_2\leq 3$. This can occur only if $V_0=1$, $V_1=0$, and $V_2\leq 1$, but the case $V_2=0$ cannot occur since otherwise the cycle would be consisting of orbifold points, contradicting Lemma \ref{L:no-only-orb}. The only possibility is then graph 18, where the only closed edge $c$ must have multiplicity $k=2$ since the $k$-fold cover of $X_k(c)$ has $k+1$ small points.
\end{proof}

\section{Proof of Theorem \ref{main-thm:cohomogeneity three}}\label{S:proof}

In this section, we prove Theorem \ref{main-thm:cohomogeneity three}, dividing it into cases as in Proposition \ref{P:1-dim stratum}.
\begin{proposition}\label{P:main-1}
Let $M$ be a closed, simply connected, positively curved Riemannian manifold which admits a cohomogeneity-three action by a compact, connected Lie group $G$. If the structure of the singular strata
of the quotient space $M/G$ is given by one among graphs $10, 11, 12, 13, 14, 15, 16$, or $18$ from Proposition \ref{P:1-dim stratum}, then $M$ is rationally elliptic.
\end{proposition}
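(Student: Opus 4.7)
The plan is to dispatch each of the eight labeled graphs to one of the two rational-homotopy propositions from Section \ref{S:homotopy}: Proposition \ref{P:two} will handle the graphs with exactly two non-orbifold (full) vertices, and Proposition \ref{P:one} those with exactly one. Both propositions require three hypotheses --- namely $M/G \cong S^3$, a prescribed number of non-orbifold points, and a one-dimensional stratum of weight two. The first hypothesis is automatic from item (2) of Section \ref{SS:points}. The second reduces to counting full vertices in each labeled graph: graphs 12, 14, and 16 each have exactly one full vertex, while graphs 10, 11, 13, 15, and 18 each have exactly two. So only the weight-two condition remains to be verified.

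For graphs 10, 11, 12, 13, 15, 16, and 18, this is immediate from the displayed edge labels, where an explicit ``$2$'' appears. For graph 14 the explicit weights are $r, q, k, \ell$, none of them a priori equal to two, and so I would extract the weight-two edge from the orbifold structure instead. The two upper vertices are hollow and trivalent; since a valency-three orbifold point is of type (1.b)--(1.e), Lemma \ref{L:codim2} forces the weights of its three incident edges to realize one of the triples $(2,2,m)$, $(2,3,3)$, $(2,3,4)$, $(2,3,5)$. In particular, at least one of the weights in $\{k, \ell, q, r\}$ must equal two. With all three hypotheses in hand, Proposition \ref{P:one} (applied to graphs 12, 14, 16) and Proposition \ref{P:two} (applied to graphs 10, 11, 13, 15, 18) immediately yield that $M$ is rationally elliptic.

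The proof is therefore essentially combinatorial bookkeeping, and the main step to be careful with is the verification for graph 14, where the weight-two edge is not directly visible but must be inferred from the orbifold classification of trivalent orbifold points. Beyond matching the number of full vertices in each picture to the appropriate proposition, I do not anticipate any serious obstacle.
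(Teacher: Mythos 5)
Your proposal is correct and follows essentially the same route as the paper: the paper's own proof is exactly the observation that each of these graphs has a weight-two edge and one or two non-orbifold points, so that Propositions \ref{P:one} and \ref{P:two} apply directly. Your more detailed verification for graph 14 (extracting the weight-two edge from the orbifold triples $(2,2,m)$, $(2,3,3)$, $(2,3,4)$, $(2,3,5)$ at a hollow trivalent vertex) correctly fills in a step the paper leaves implicit, and your tally of full versus hollow vertices matches the labeled graphs.
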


 {\begin{proof}
All the graphs listed have an edge with weight two, and one or two non-orbifold points. Therefore, the result follows from Propositions \ref{P:one} and \ref{P:two}.
\end{proof}}

The remaining cases will be shown to be rationally elliptic, by showing that they can be written as double disk bundles and then applying Proposition \ref{P:disk bundle}. 
Before doing that however, we need the following lemma to help us recognize orbifolds diffeomorphic to a product $S^2_{p_0,p_1}\times [0,1]$, for $S^2_{p_0,p_1}$ a spindle orbifold.

\begin{lemma}\label{L:hom-to-diff}
Let $S^2_{p_0,p_1}$ be the spindle orbifold, and let $\mathcal{O}$ denote a Riemannian orbifold with boundary admitting a \emph{weight-preserving homeomorphism} $f: \mathcal{O}\to S^2_{p_0,p_1}\times[0,1]$, that is, a homeomorphism sending points in $\mathcal{O}$ to points in $S^2_{p_0,p_1}\times [0,1]$ with the same isotropy group. Then $\mathcal{O}$ is orbifold-diffeomorphic to $S^2_{p_0,p_1}\times [0,1]$.
\end{lemma}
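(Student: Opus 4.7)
The plan is to upgrade the given weight-preserving homeomorphism $f$ to an orbifold diffeomorphism in three stages, working first at the level of underlying topological $3$-manifolds, then refining to the stratified setting, and finally to the orbifold-smooth category.

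The first step is to promote $f$ to a diffeomorphism of the underlying manifolds. The underlying space $|\mathcal{O}|$ is a compact topological $3$-manifold with boundary, homeomorphic to $S^2\times[0,1]$ via the underlying map of $f$. Since in dimension three every topological manifold admits a unique smooth structure (Moise), this homeomorphism can be upgraded to a diffeomorphism $g:|\mathcal{O}|\to S^2\times[0,1]$ of smooth manifolds with boundary.

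The second step is to arrange $g$ to respect the singular stratification. The singular set of $\mathcal{O}$ consists of two disjoint properly embedded smooth arcs $\gamma_0,\gamma_1$ joining the two boundary components, and the same is true for the target $S^2_{p_0,p_1}\times[0,1]$. The images $g(\gamma_i)$ are a priori just some pair of such arcs, but since $f$ preserves weights we know which arc $\gamma_i$ is supposed to correspond to which $\{x_j\}\times[0,1]$. By the standard $3$-dimensional unknotting for collections of disjoint properly embedded arcs in $S^2\times[0,1]$, I would compose $g$ with an ambient diffeomorphism of the target (isotopic to the identity) to arrange $g(\gamma_i)=\{x_i\}\times[0,1]$ with matching weights $p_i$.

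The final step is to upgrade this weight-preserving diffeomorphism to an orbifold diffeomorphism. By the orbifold tubular neighborhood theorem, each $\gamma_i$ has a neighborhood in $\mathcal{O}$ orbifold-diffeomorphic to $(\R^2/\Z_{p_i})\times[0,1]$, and likewise for the target. Since the orientation-preserving orbifold diffeomorphism group $\mathrm{Diff}^+(D^2/\Z_p)$ is connected, any two such orbifold trivializations over an arc differ by an isotopy, which I can absorb into $g$ via a further isotopy supported in a slightly larger tubular neighborhood of the singular set. Away from the singular set both orbifolds are ordinary smooth manifolds and $g$ is already a diffeomorphism, so the modified $g$ is a global orbifold diffeomorphism. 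I expect the main obstacle to be precisely this last step: the normal framing of the singular arcs induced by the orbifold structure of $\mathcal{O}$ need not a priori match that of the standard product $S^2_{p_0,p_1}\times[0,1]$, and the interpolation depends delicately on the connectedness of $\mathrm{Diff}^+(D^2/\Z_p)$ to remove the framing ambiguity.
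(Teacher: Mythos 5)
Your approach is genuinely different from the paper's. The paper builds the diffeomorphism piecewise: it lifts $f$ near each stratum to a $\Z_{p_i}$-equivariant homeomorphism of $3$-manifold branched covers, smooths equivariantly to get orbifold diffeomorphisms $F_0,F_1$ of tubular neighborhoods, separately smooths $f$ on the manifold complement of the singular set to get $F$, and then glues these pieces; the crux of the gluing is the computation $\pi_0\bigl(\operatorname{Diff}^+(\partial B_\epsilon(\Sigma_i),\partial B_\epsilon(\Sigma'_i))\bigr)=0$, carried out by embedding the boundary annuli into $S^2$ as complements of two disks and using $\pi_0(\operatorname{Diff}^+(S^2))\cong\pi_0(\mathrm{SO}(3))=0$. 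You instead globalize first via Moise, then realign the singular arcs, then patch near them. Conceptually this separates ``smooth the underlying manifold'' from ``match the orbifold structure,'' which is a clean framing, but it introduces one extra difficulty and does not circumvent the gluing issue.

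The extra difficulty is in your Step 2. There is no general ``unknotting'' theorem for pairs of disjoint properly embedded arcs in $S^2\times[0,1]$: such a pair can be linked (one arc can wind around the other), and then it is not ambiently isotopic to the standard pair. What saves the argument is not a general unknotting result but the fact that the Moise smoothing $g$ can be chosen isotopic to $f$, so that $g(\gamma_i)$ is topologically ambiently isotopic to $f(\gamma_i)=\{x_i\}\times[0,1]$, together with the fact that topological ambient isotopy of smooth arcs in a smooth $3$-manifold upgrades to smooth ambient isotopy. You need to invoke this chain of reasoning explicitly; the claim as written is false and would also fail for arcs coming from a space that is merely homotopy-equivalent rather than homeomorphic to $S^2_{p_0,p_1}\times[0,1]$.

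In Step 3 you have correctly located the obstruction, but $\operatorname{Diff}^+(D^2/\Z_p)$ being connected is not quite the statement you need. After Step 2 the map $g$ is a diffeomorphism of underlying manifolds carrying $\gamma_i$ to $\{x_i\}\times[0,1]$; to replace $g$ on the tube $N_i$ by an orbifold diffeomorphism while matching $g$ on the interior boundary annulus $\partial N_i\cap\operatorname{int}\mathcal{O}\cong S^1\times[0,1]$, you must know that the restriction of $g$ to that annulus is isotopic to the restriction of some fixed orbifold trivialization. This is a $\pi_0$-statement about orientation-preserving diffeomorphisms between annuli, which is precisely what the paper proves (by filling in the annulus to $S^2$). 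Without that computation, the interpolation you describe inside $N_i$ has no reason to agree with $g$ where it must, so the ``main obstacle'' you flag is real and remains unresolved by the connectedness of $\operatorname{Diff}^+(D^2/\Z_p)$ alone.
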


\begin{proof}
Fix Riemannian metrics on $\mathcal{O}$ and $S^2_{p_0,p_1}\times [0,1]$, and fix orientations so that $f$ is orientation-preserving. Let $\Sigma_i$ (resp. $\Sigma_i'$), $i=0,1$, 
denote the orbifold stratum of $\mathcal O$ (resp. of $S^2_{p_0,p_1}\times[0,1]$) with local group $\mathbb{Z}_{p_{i}}$. We then have:
\begin{enumerate}
\item The restriction $f|_{\Sigma_1^c}:\Sigma_1^c\to (\Sigma_1')^c$ lifts to a $\mathbb{Z}_{p_{0}}$-equivariant homeomorphism between the orbifold covers
\[
\tilde{f}_0:\widetilde{\Sigma^c_1}\to \widetilde{(\Sigma_1')^c}\stackrel{\textrm{diff}}{\simeq}\mathbb{D}^2\times [0,1].
\]
Since these are 3-dimensional manifolds, it follows that there is an orientation-preserving $\mathbb{Z}_{p_{0}}$-equivariant diffeomorphism $\tilde{F}_0: \widetilde{B_\epsilon(\Sigma_0)}\to \widetilde{B_\epsilon(\Sigma'_0)}$. One way to produce $\tilde{F}_0$ is to first deform the homeomorphism $\tilde f_0$ to an orientation-preserving diffeomorphism $F'_0:\widetilde{\Sigma^c_1}\to \widetilde{(\Sigma_1')^c}$ isotopic to $\tilde{f}_0$ sending $\Sigma_{0}$ to $\Sigma'_{0}$ and inducing an isometry $dF_0':\nu \Sigma_{0}\to \nu\Sigma'_{0}$. Then $dF'_0$ is automatically $\mathbb{Z}_{p_{0}}$-equivariant, 
and we can define, for $v\in \nu^1\Sigma_0$, $\tilde{F}_0(\exp tv):=\exp(t dF'_0(v))$.

In particular, this gives an orbifold diffeomorphism ${F}_0:{B_\epsilon(\Sigma_0)}\to {B_\epsilon(\Sigma_0')}$. Similarly, one can obtain an orbifold diffeomorphism $F_1:{B_\epsilon(\Sigma_1)}\to {B_\epsilon(\Sigma_1')}$.
\item  The restriction $f|_{(\Sigma_0\cup \Sigma_1)^c}:(\Sigma_0\cup \Sigma_1)^c\to(\Sigma_0'\cup \Sigma_1')^c$ is a homeomorphism between smooth manifolds, 
thus it can be isotoped to a (orientation-preserving) diffeomorphism $F:(\Sigma_0\cup \Sigma_1)^c\to(\Sigma_0'\cup \Sigma_1')^c$. 
Furthermore, we can assume that $F$ takes $B_\epsilon(\Sigma_i)\setminus \Sigma_i$ to $B_\epsilon(\Sigma_i')\setminus \Sigma_i'$.
\end{enumerate}
Finally, we claim that $F_i|_{\partial B_\epsilon(\Sigma_i)}$ and $F|_{\partial B_\epsilon(\Sigma_i)}$ are isotopic diffeomorphisms in the group $\operatorname{Diff}^+(\partial B_\epsilon(\Sigma_i), \partial B_\epsilon(\Sigma_i'))$ of orientation-preserving diffeomorphisms, so that it is possible to change $F_i$ in a neighbourhood of $\partial B_\epsilon(\Sigma_i)$ 
in such a way that it becomes equal to $F$ along $\partial B_\epsilon(\Sigma_i)$, and the maps $F_0, F, F_1$ glue together to a diffeomorphism 
$$\bar{F}:\mathcal{O}\to S^2_{p_0,p_1}\times[0,1].$$

In order to prove the claim, we will in fact prove that $\pi_0(\operatorname{Diff}^+(\partial B_\epsilon(\Sigma_i), \partial B_\epsilon(\Sigma_i')))=\{0\}$. 
Notice first that $\partial B_\epsilon(\Sigma_i)$ and $\partial B_\epsilon(\Sigma_i')$ are both manifolds diffeomorphic to $S^1\times [0,1]$, hence they admit orientation-preserving embeddings 
$\eta: \partial B_\epsilon(\Sigma_i)\to S^2$ and $\eta':\partial B_\epsilon(\Sigma_i')\to S^2$ with image the complement of two $\epsilon$-disks $D_\pm$.
%=B_{\epsilon}(x_\pm)$ (we use the same $\epsilon$ as before to avoid introducing more notation, even though this is not needed).
The maps $\eta'\circ F_i\circ \eta^{-1}|_{\partial D^{\pm}}:\partial D^{\pm}\to \partial D^{\pm}$ are orientation-preserving diffeomorphisms of $S^1$ and therefore they extend to diffeomorphisms $D^\pm\to D^\pm$. %that furthermore can be chosen equal to the identity in ${1\over 2}D^\pm=B_{\epsilon/2}(x_\pm)$.
In particular, the restriction map
\[
\operatorname{Diff}^+_D(S^2,S^2)\to \operatorname{Diff}^+(\partial B_\epsilon(\Sigma_i), \partial B_\epsilon(\Sigma_i')),
\]
where $\operatorname{Diff}^+_D(S^2,S^2)$ is the group of orientation-preserving diffeomorphisms of $S^2$ that sends $D^\pm$ to itself
%and equals the identity on ${1\over 2}D^{\pm}$
, induces a surjection $\pi_0(\operatorname{Diff}^+_D(S^2,S^2))\to\pi_0(\operatorname{Diff}^+(\partial B_\epsilon(\Sigma_i), \partial B_\epsilon(\Sigma_i')))$.
 On the other hand, since any two points in $S^2$ can be moved to any other two points via a diffeomorphism isotopic to the identity, we have
 \[
 \pi_0(\operatorname{Diff}^+_D(S^2,S^2))\cong \pi_0(\operatorname{Diff}^+(S^2,S^2))\cong \pi_0(SO(3))\cong \{0\}
\]
and the final claim is proved. 
%
%
%Write $S^2(p_0,p_1)\times [0,1]=U_0\cup_{\varphi_0} S^1\times [0,1]\cup_{\varphi_1} U_1$ where $U_i=B_\epsilon(p_i)$ and $\varphi_i:\partial U_i \to S^1\times \{i\}$ are glueing diffeomorphisms. Then we can write $\mathcal{O}=V_0\cup_{\psi_0}\along a glueing $\varphi$ of the common boundary of $U_1$ and $U_2$, and similarly $O=V_1\cup V_2$. Assume also that the homeomorphism $f:O\to S^2(p_1,p_2)\times [0,1]$ sends $V_i\to U_i$. I have orbifold charts $\tilde{V}_i\to V_i$ and $\tilde{U}_i\to U_i$, and clearly the maps $f|_{V_i}:V_i\to U_i$ lift to $\mathbb{Z}_{p_i}$-equivariant homeomorphisms $\tilde{f}_i:\tilde{V}_i\to \tilde{U}_i$. It seems "clear" (but I do not know the reference for it) that $\tilde{f}_i$ is isotopic to a $\mathbb{Z}_{p_i}$-equivariant diffeomorphism $\tilde{g}_i:\tilde{V}_i\to \tilde{U}_i$, inducing the orbifold diffeomorphisms $g_i:U_i\to V_i$. Then, I have to glue $g_1|_{\partial U_1}$ to $g_2|_{\partial U_2}$, so I have to show that they are isotopic
\end{proof}

\begin{proposition}\label{P:main-2}
Suppose $M$ is a closed, simply connected, positively curved Riemannian manifold which admits a cohomogeneity-three action by a compact, connected Lie group $G$. If the structure of the singular strata 
of the quotient space $M/G$ is given by one among graphs $1, 2, 3, 4, 5, 6, 8, 9$, or $17$ from Proposition \ref{P:1-dim stratum}, then $M$ is rationally elliptic.
\end{proposition}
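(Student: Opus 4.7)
The plan is to apply Proposition \ref{P:disk bundle} in each of the nine cases listed. For every graph we will exhibit subspaces $A_*,B_*\subset X=M/G$ whose preimages $A,B\subset M$ are smooth submanifolds with at least one of $A,B$ rationally $\Omega$-elliptic, and such that the complement $X\setminus B_\epsilon(A_*\cup B_*)$ is orbifold-diffeomorphic to a cylinder $[0,1]\times\mathcal{O}$ over a compact $2$-orbifold. The rational $\Omega$-ellipticity will come for free: $A$ and $B$ will be built out of singular orbits $G/G_p$, principal orbits $G/H_0$, or closed cohomogeneity-one $G$-submanifolds, obtained as $\pi^{-1}$ of a closed singular edge or of a smooth arc joining two singular orbits through the principal stratum. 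Each of these is rationally $\Omega$-elliptic: homogeneous spaces by \cite{GH87}, and cohomogeneity-one manifolds by Grove--Halperin.

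Explicitly, the choices are as follows. For graphs 1 and 2 we take $A_*=\{p_1\}$, with $B_*=\{p_2\}$ in graph 2 and $B_*=\{q\}$ a principal point in graph 1. For graph 3 we take $A_*=\{p_1\}$ and $B_*=\{p_2,p_3\}\cup\gamma$, where $\gamma$ is a smooth path joining $p_2$ to $p_3$ whose interior lies in the principal stratum of $X$ and avoids $p_1$. For graphs 4 and 8 we take $A_*=\{p_1\}$ and $B_*=\{p_2\}$. For graphs 5 and 9 we take $A_*=\{p_1\}$ and $B_*=\{p_2,p_3\}\cup e$, where $e$ is a closed singular edge between $p_2$ and $p_3$ (the unique such edge in graph 5, the edge opposite $p_1$ in graph 9). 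For graph 17, $A_*$ is the isolated non-orbifold vertex $p_3$ and $B_*=\{p_1,p_2\}\cup e$ is the remaining closed singular edge with its endpoints.

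The main obstacle in each case is to verify the orbifold product structure of the complement. Topologically the complement is always $S^3$ minus two disjoint sausage-shaped regions around $A_*$ and $B_*$, hence homeomorphic to $S^2\times[0,1]$. The singular edges of $X$ whose endpoints have been split between $A_*$ and $B_*$ persist in the complement as singular arcs running between the two boundary spheres, each preserving its original weight. This yields a weight-preserving homeomorphism with $S^2_{p_0,p_1}\times[0,1]$ for appropriate weights $(p_0,p_1)$ reading off the number and weights of these arcs (with $p_i=1$ allowed, giving the teardrop when only one arc remains), and Lemma \ref{L:hom-to-diff} upgrades this to the orbifold diffeomorphism required by Proposition \ref{P:disk bundle}.

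The most delicate case is graph 6, the tripod with weights $2,2,k$ and orbifold hub $v_0$. Here we take $A_*=e_3\cup\{v_0,v_3\}$, the closed weight-$k$ edge, and $B_*=\{v_1,v_2\}\cup\gamma$ with $\gamma$ a smooth path joining $v_1$ to $v_2$ whose interior lies in the principal stratum and avoids $v_0$. The key observation is that, in the slice representation $\nu_xL_{v_0}\simeq\R^3$ of $G_{v_0}=G_{2,2,k}$, the weight-$k$ orbit of rays consists of a single pair of antipodal rays; their union is therefore a smooth line through the origin, and consequently $\pi^{-1}(e_3\cup\{v_0\})$ is a smoothly embedded cohomogeneity-one submanifold of $M$. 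The two weight-$2$ edges $e_1,e_2$ then run end-to-end inside the complement, which is weight-preservingly homeomorphic to $S^2_{2,2}\times[0,1]$, and Lemma \ref{L:hom-to-diff} concludes as before.
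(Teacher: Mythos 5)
Your overall strategy -- exhibiting $A_*,B_*$ and invoking Proposition~\ref{P:disk bundle} together with Lemma~\ref{L:hom-to-diff} -- is exactly the paper's, and most of your choices of $A_*,B_*$ coincide with theirs (graphs 1, 2, 3, 4, 6, 8, 9, 17). For graph 5 you use the closed weight-$\ell$ edge as $B_*$ rather than the paper's auxiliary arc through the principal stratum; that variant is actually cleaner, since the complement then contains a single singular arc rather than two, and a single tame arc in $S^3$ is always unknotted, making the weight-preserving homeomorphism to $S^2_{k,1}\times[0,1]$ automatic. Your slice-representation observation for graph 6, explaining why $\pi^{-1}(e_3\cup\{v_0\})$ is smooth at the hub $G_{2,2,k}$-vertex, is a useful clarification that the paper only asserts.

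The genuine gap is in the two-arc cases, graphs 8 and 9. After removing $B_\epsilon(A_*\cup B_*)$ you are left with a two-string tangle in $S^2\times[0,1]$, and the weight-preserving homeomorphism to $S^2_{k,\ell}\times[0,1]$ that you assert requires this tangle to be trivial, i.e.\ requires the singular cycle $c$ to be an unknot in $X\cong S^3$. This is not automatic and must be proved: the paper shows that if $c$ were knotted, the branched double cover $X_2(c)$ would have nontrivial fundamental group, so its universal cover would be a positively curved Alexandrov space with $\ge 4$ small points, contradicting Hsiang--Kleiner. Your phrase "this yields a weight-preserving homeomorphism with $S^2_{p_0,p_1}\times[0,1]$" silently uses the unknottedness without justification. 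The same issue, in milder form, occurs for graph 6: there you choose $\gamma$ yourself, so you do not need to prove anything, but you do need to record the explicit condition that the concatenation $\gamma*e_1*e_2$ be unknotted (which can be arranged by running $\gamma$ "parallel" to the edges), as the paper does; otherwise the two weight-$2$ arcs in the complement might form a nontrivial tangle. Adding the branched-cover unknottedness argument for graphs 8 and 9 and the unknottedness requirement on $\gamma$ in graph 6 would close the gap.
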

 
\begin{proof}
We will prove that a manifold $M$ such that $X=M/G$ has singular structure corresponding to a graph in the list above, can be written as a double disk bundle, and the rational ellipticity can be deduced from Proposition \ref{P:disk bundle}. 
In order to do this, we have to find subspaces $A_*, B_*$ in $X$ whose preimages are smooth submanifolds of $M$, such that $X':=X\setminus B_{\epsilon}(A_*\cup B_*)$ is an orbifold 
splitting off a one-dimensional factor, see Figure \ref{F:DDB}.

{\bf Graphs 1 and 2.} Take $A^*$ to be one singular point, and $B_*$ to be either a principal point (Graph 1) or the other singular point (Graph 2). 
Then $A:=\pi^{-1}(A_*)$ and $B:=\pi^{-1}(B_*)$ are orbits (hence rationally $\Omega$-elliptic smooth manifolds), and $X'$ is a manifold with boundary, diffeomorphic to $S^2\times [0,1]$.

{\bf Graph 3.} Take $A_*$ to be one singular point, and $B_*$ to be a geodesic $\gamma$ between the other two singular points. Then $A$ is a smooth manifold because it is an orbit. 
We claim that $B$ is smooth as well. In fact:
\begin{itemize}
\item The preimage of $\gamma|_{[0,\epsilon)}$ and $\gamma|_{(1-\epsilon,1]}$ is a smooth manifold near the endpoints since it is given as the exponential image of a (rank two) sub-bundle 
of $\nu(\pi^{-1}(\gamma(0)))$ and $\nu(\pi^{-1}(\gamma(1)))$.
\item The preimage of $\gamma|_{(0,1)}$ is smooth as well, because $\gamma(0,1)$ is contained in the manifold part of $X$, 
over which $\pi$ is a smooth Riemannian submersion. 
\end{itemize}
Finally $X'$ is a manifold with boundary, diffeomorphic to $S^3\setminus\left({B_\epsilon(p_1)\coprod B_\epsilon(p_2)}\right)\simeq S^2\times [0,1]$.

{\bf Graph 4.} Take $A_*, B_*$ to be the two vertices of $\Gamma$. Their preimages are orbits and in particular they are both smooth and rationally $\Omega$-elliptic. 
Furthermore, $X'$ is homeomorphic to $S^2\times [0,1]$, and the orbifold points consist of one line, joining the two boundary components. 
Therefore, $X'$ is weight-preserving homeomorphic to $S^2_{k,1}\times [0,1]$, hence by Lemma \ref{L:hom-to-diff} it is orbifold-diffeomorphic to it.

{\bf Graph 5.} Consider a curve $\gamma:[0,1]\to X$ with the following properties:
\begin{itemize}
\item $\gamma$ starts at the point with space of directions $S^2_{1,k}$ with $\gamma^+(0)$ being the furthest point (i.e. at distance $\pi/2$) from the orbifold point of $S^2_{1,k}$.
\item $\gamma$ ends at the point with space of directions $S^2_{1,\ell}$ with $\gamma^-(1)$ being the furthest point (i.e. at distance $\pi/2$) from the orbifold point of $S^2_{1,\ell}$.
\item $\gamma|_{(0,1)}$ is contained in the manifold part of $X$, it is smooth on $(0,1)$, and it is a geodesic in an $\epsilon$ neighborhood of $0$ and $1$.
\item Letting $\gamma_k$ and $\gamma_\ell$ denote the singular edges of multiplicity $k$ and $\ell$, then the concatenation $\gamma*\gamma_k*\gamma_\ell$ is unknotted in $X\simeq S^3$. This can be done for example by defining $\gamma$ as a concatenation of curves ``running parallel'' to $\gamma_k$ and $\gamma_\ell$.

\end{itemize}
Define now $A_*\subset X$ as the singular vertex not contained in $\gamma$ and $B_*:=\gamma([0,1])$. Then $A=\pi^{-1}(A_*)$ is smooth and rationally $\Omega$-elliptic because it is an orbit, and $B=\pi^{-1}(B_*)$ is smooth for the same reason as the submanifold $B$ in the case of Graph 3.
%: in fact, $\pi^{-1}(\gamma(0,1))$ is a manifold, and furthermore $\pi^{-1}(\gamma([0,\epsilon)))$, $\pi^{-1}(\gamma((1-\epsilon,1]))$ 
%are exponential images of vector bundles over $\pi^{-1}(\gamma(0))$ and $\pi^{-1}(\gamma(1))$ respectively. Furthermore, since $\gamma*\gamma_k*\gamma_\ell$ is unknotted, 
we can find a homeomorphism $X\to S^3$ sending $B_\epsilon(A_*)$ to a neighbourhood of the north pole, $B_{\epsilon}(B_*)$ to a neighborhood of the south pole, 
and $\gamma_k,\gamma_\ell$ to two meridians. In particular, $X'$ is again homeomorphic to $S^2\times [0,1]$, with two 1-dimensional orbifold strata consisting of lines 
$\{x\}\times [0,1]$, $\{y\}\times [0,1]$ hence, by Lemma \ref{L:hom-to-diff}, $X'$ is orbifold diffeomorphic to $S^2_{k,\ell}\times [0,1]$.

{\bf Graph 6.} This is similar to graph 5: take $A_*$ the edge of weight $k$, and $B_*$ the segment between the two vertices with spaces of directions $S^2_{1,2}$ such that:
\begin{itemize}
\item $\gamma$ starts at the point with space of directions $S^2_{1,2}$ with $\gamma^+(0)$ being the furthest point (i.e. at distance $\pi/2$) from the orbifold point of $S^2_{1,2}$.
\item $\gamma$ ends at the point with space of directions $S^2_{1,2}$ with $\gamma^-(1)$ being the furthest point (i.e. at distance $\pi/2$) from the orbifold point of $S^2_{1,2}$.
\item $\gamma|_{(0,1)}$ is contained in the manifold part of $X$, it is smooth on $(0,1)$, and it is a geodesic in an $\epsilon$ neighbourhood of $0$ and $1$.
\item Letting $\gamma_1$ and $\gamma_2$ denote the singular edges of multiplicity two, then the concatenation $\gamma*\gamma_1*\gamma_2$ is unknotted in $X\simeq S^3$.
\end{itemize}
The sets $A=\pi^{-1}(A_*)$ and $B=\pi^{-1}(B_*)$ are smooth manifolds for the same reasons as before. Furthermore, the $G$ action restricts to a cohomogeneity-one action on them, and thus they are rationally $\Omega$-elliptic by \cite{GH87}. By the unknottedness assumption, 
we can find a homeomorphism $X\to S^3$ sending $B_\epsilon(A_*)$ to a neighborhood of the north pole, $B_{\epsilon}(B_*)$ to a neighborhood of the south pole, 
and $\gamma_1,\gamma_2$ to two meridians. In particular, $X'$ is homeomorphic to $S^2\times [0,1]$, with two 1-dimensional orbifold strata consisting of lines $\{x\}\times [0,1]$, $\{y\}\times [0,1]$. Again by Lemma \ref{L:hom-to-diff}, $X'$ is orbifold diffeomorphic to $S^2_{2,2}\times [0,1]$.

{\bf Graph 8.} In this case, there is a loop $c$, which we claim is the unknot. In fact, if $c$ was knotted, then $X_2(c)$ would have non-trivial fundamental group and two small points.
Thus its universal cover would be a positively curved Alexandrov space with $2|\pi_1(X_2(c))|\geq 4$ small points, a contradiction. Therefore, $c$ is the unknot. 
Then by letting $A_*, B_*$ denote the two vertices, we can find a homeomorphism $X\to S^3$ sending $B_\epsilon(A_*)$ to a neighborhood of the north pole, $B_{\epsilon}(B_*)$ 
to a neighborhood of the south pole, and the two edges to meridians. By Lemma \ref{L:hom-to-diff}, $X'$ is orbifold diffeomorphic to $S^2_{k,l}\times [0,1]$.

{\bf Graph 9.} Letting $B_*$ denote an edge of $\Gamma$ and $A_*$ the opposite vertex, the argument is completely analogous to the case of graph 5, once we show that the loop $c$ 
in graph 9 is the unknot. In fact, the universal cover $\widetilde{X_2(c)}$ of $X_2(c)$ is an Alexandrov space with positive curvature and $3\cdot |\pi_1(X_2(c))|$ small points, 
from which we get that $\pi_1(X_2(c))$ is trivial, thus $c$ is the unknot.

{\bf Graph 17.} In this case, we can take $B_*$ to be the edge, and $A_*$ the isolated point. The same argument as for graph 5 shows that $A$ is a smooth submanifold. 
Furthermore, there are no other singularities outside of $A_*\cup B_*$ hence $X'$ is diffeomorphic to $S^2\times[0,1]$.
\begin{figure}
\begin{longtable}{ |p{4.5cm}|p{4.5cm}|p{4.5cm}|}
\hline
Graph 1.
\begin{center}
\begin{tikzpicture}
\filldraw[black] (-1,0) circle (2pt) node[anchor=east] {$A_*$};
\draw (1,0)  circle (2pt) node[anchor=west] {$B_*$};
\end{tikzpicture}
\end{center}
\vspace{-0.5cm}
&
Graph 2.
\begin{center}
\begin{tikzpicture}
\filldraw[black] (-1,0) circle (2pt) node[anchor=east] {$A_*$};
\filldraw[black] (1,0) circle (2pt) node[anchor=west] {$B_*$};
\end{tikzpicture}
\end{center}
\vspace{-0.5cm}
&
Graph 3.
\begin{center}
\begin{tikzpicture}
\filldraw[black] (-1,0) circle (2pt) node[anchor=east] {$A_*$};
\filldraw[black] (1,0.5) circle (2pt);
\filldraw[black] (1,-0.5) circle (2pt);
\draw[dashed] (1,-0.5) -- node[anchor=west] {$B_*$}++ (0,1);
\end{tikzpicture}
\end{center}
\vspace{-0.5cm}
\\\hline

Graph 4.
\begin{center}
\begin{tikzpicture}
\draw (-1,0) -- (1,0);
\filldraw[black] (-1,0) circle (2pt) node[anchor=east] {$A_*$};
\filldraw[black] (1,0) circle (2pt) node[anchor=west] {$B_*$};
%\filldraw[black] (0.2,-0.62) circle (0pt);
\end{tikzpicture}
\end{center}
\vspace{-0.5cm}
&

Graph 5.
\begin{center}
\begin{tikzpicture}
\draw (-1,0) -- (1,0.5);
\draw (-1,0) -- (1,-0.5);
\draw[dashed] (1,0.5) -- node[right] {$B_*$}++ (0,-1) ;
\filldraw[black] (1,0.5) circle (2pt);
\filldraw[black] (1,-0.5) circle (2pt);
\filldraw[black] (-1,0) circle (2pt) node[anchor=east] {$A_*$};
%\filldraw[black] (0.2,-0.82) circle (0pt);
\end{tikzpicture}
\end{center}
\vspace{-0.5cm}
&
Graph 6.
\vspace{-0.2cm}
\begin{center}
\begin{tikzpicture}
\draw (0,0) -- (1,0.5) node[midway, above] {2};;
\draw (-1,0) -- (0,0) node[midway, above] {$A_*$};
\draw  (0,0) -- (1,-0.5) node[midway, below] {2};
\draw[dashed] (1,0.5) -- node[right] {$B_*$}++(0,-1);
\filldraw[white] (-1,0) circle (2pt);
\draw[black] (-1,0) circle (2pt)  node[anchor=east] {\phantom{$A_*$}};
\filldraw[black] (0,0) circle (2pt); 
%\filldraw[black] (0.1,0.3) circle (2pt) ;
\filldraw[black] (1,-0.5) circle (2pt);
\filldraw[black] (1,0.5) circle (2pt);
\end{tikzpicture}
\end{center}
\vspace{-0.7cm}
\\\hline
Graph 8.

\begin{center}
\begin{tikzpicture}
\draw (0,0) ellipse [x radius=1cm, y radius=0.3cm];
\filldraw[black] (-1,0) circle (2pt) node[anchor=east] {$A_*$};
\filldraw[black] (1,0) circle (2pt) node[anchor=west] {$B_*$};
%\filldraw[black] (0,1.74) circle (0pt);
%\filldraw[black] (0,0.26) circle (0pt); 
\end{tikzpicture}
\end{center}
\vspace{-0.5cm}
&
Graph 9.
\begin{center}
\begin{tikzpicture}
\draw (-1,0) -- (1,0.5);
\draw (-1,0) -- (1,-0.5);
\draw (1,0.5) -- node[right] {$B_*$}++ (0,-1) ;
\filldraw[black] (1,0.5) circle (2pt);
\filldraw[black] (1,-0.5) circle (2pt);
\filldraw[black] (-1,0) circle (2pt) node[anchor=east] {$A_*$};
%\filldraw[black] (0.2,-0.82) circle (0pt);
\end{tikzpicture}
\end{center}
\vspace{-0.5cm}
&
Graph 17.
\begin{center}
\begin{tikzpicture}
\filldraw[black] (-1,0) circle (2pt) node[anchor=east] {$A_*$};
\filldraw[black] (1,0.5) circle (2pt);
\filldraw[black] (1,-0.5) circle (2pt);
\draw (1,-0.5) -- node[anchor=west] {$B_*$}++ (0,1);
\end{tikzpicture}
\end{center}
\vspace{-0.5cm}
\\ \hline
\end{longtable}
\caption{Double disk bundle decompositions. Dashed lines (resp. the hollow point in the first picture) represent curves (resp. point) chosen in the regular part.}\label{F:DDB}
\end{figure}
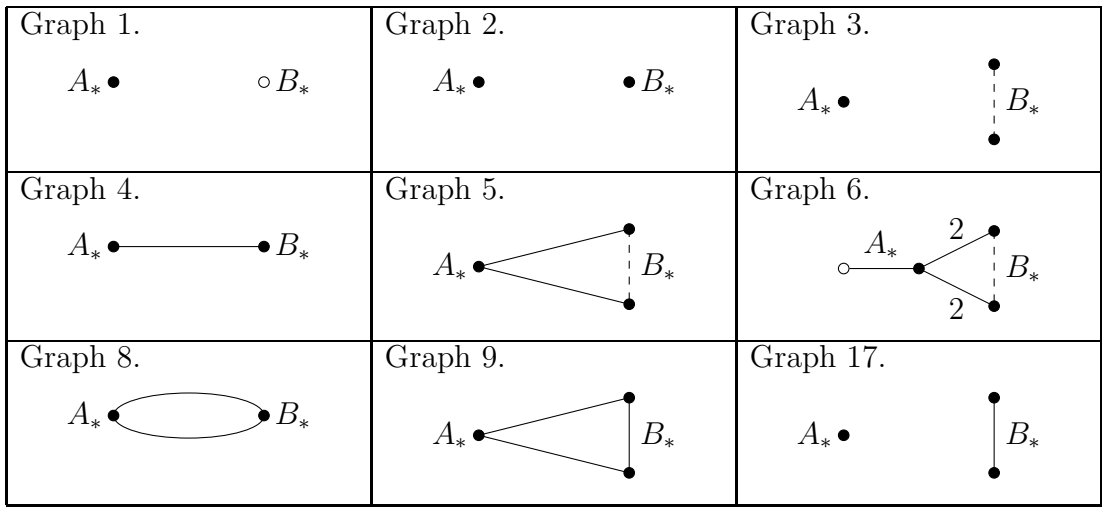
\end{proof}

The last case to study is the case of a quotient space whose singular stratum is as in graph 7, in the list of Proposition \ref{P:1-dim stratum}.

\begin{lemma}\label{L:knot}
Suppose $X=M/G$ is the quotient of a colosd, simply connected, positively curved, cohomogeneity-three manifold $M$, with $\partial X=\emptyset$ and labeled singular graph $\Gamma$ given by
\begin{center}
\begin{tikzpicture}
\draw (0,1) ellipse [x radius=22pt, y radius=6pt] node[yshift=0.2cm, above] {$k$};
\filldraw[black] (0.8,1) circle (2pt);
\end{tikzpicture}
\end{center}
Then either the singular cycle $c$ in $\Gamma$ is the unknot, or it is the trefoil knot and $k=2$.
\end{lemma}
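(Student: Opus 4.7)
The strategy is to exploit the branched cover machinery of Proposition~\ref{P:double-branched}. Form $Y := X_k(c)$: since every edge of $c$ has weight exactly $k$, its preimage in $Y$ becomes smooth, so the only singularities of $Y$ are the $k$ preimages of the isolated non-orbifold vertex $q$ (disjoint from $c$), each of which is a small point by Proposition~\ref{P:double-branched}(2). Because $Y$ is a positively curved Alexandrov $3$-space, the Hsiang--Kleiner bound of at most three small points immediately yields $k \leq 3$.

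The next step is to analyze $Y$ topologically. Since every singular point of $Y$ is a non-orbifold small point whose space of directions is homeomorphic to $S^2$, $Y$ is a closed topological $3$-manifold, and by construction it is the $k$-fold cyclic branched cover of $S^3 \simeq X$ along the knot $c$. Positive Alexandrov curvature forces $\pi_1(Y)$ to be finite via the Alexandrov Bonnet--Myers theorem. If $Y$ is homeomorphic to $S^3$, then the deck transformation is a periodic self-diffeomorphism of $S^3$ whose fixed set is $c$, and the positive solution of the Smith conjecture (Bass--Morgan--Thurston) forces $c$ to be the unknot. Otherwise, $\pi_1(Y)$ is a nontrivial finite group, and by Milnor's classical theorem on cyclic branched covers admitting a spherical geometric structure, $c$ must be a nontrivial torus knot $T(p, q)$ with $\gcd(p, q) = 1$ and $\frac{1}{p} + \frac{1}{q} + \frac{1}{k} > 1$.

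To complete the proof, I would combine $k \leq 3$ with the spherical inequality to obtain a short explicit list of candidate pairs $(T(p, q), k)$ and eliminate all but the trefoil with $k = 2$. For $k = 3$, $Y$ attains the Hsiang--Kleiner maximum of three small points, so the three points must realize extremal spherical positions (perimeter $\pi$ triangle); a direct check shows that this rigid extremal configuration cannot simultaneously be the three cyclic-deck preimages of a single non-orbifold point coming from a nontrivial torus knot branching. For $k = 2$, $Y$ is a double-branched spherical space form with exactly two small points; lifting the $G$-action to the $k$-fold branched cover $\tilde M \to M$ along the smooth codimension-$2$ submanifold $\pi^{-1}(c)$ yields a cohomogeneity-three $G$-action on $\tilde M$ whose quotient is $Y$, and applying the rank and group restrictions of Lemma~\ref{L:rank2} and Proposition~\ref{P:groups} (the edge $c$ has weight two, hence Proposition~\ref{P:groups} bites precisely when $k = 2$) eliminates the remaining torus knots $T(2, q)$ with $q \geq 5$ and $T(3, 5)$.

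The hard part will be this last pruning step. Milnor's theorem, together with $k \leq 3$, leaves several torus-knot candidates, and purely topological data cannot distinguish them: for instance, the double branched cover of $S^3$ along any $T(2,q)$ with $q$ odd is the lens space $L(q,1)$, which admits positive sectional curvature. The extra leverage must therefore come from pairing the branched-cover classification with the finer geometry of $Y$ (extent bounds at the small points, equivariant structure lifted from $M$, slice representations along $c$). I expect the key input is precisely this equivariant lift together with the codimension and rank constraints established earlier, since they are sharp enough to force $q = 3$ once $k = 2$ is known.
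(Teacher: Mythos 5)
Your proof breaks at the very first step because you have misread the graph. In graph 7, the non-orbifold vertex $q$ lies \emph{on} the singular cycle $c$ (it is the unique vertex of valency two incident to the self-loop, as established in Case~2 of the proof of Proposition~\ref{P:1-dim stratum}); it is not disjoint from $c$. Graph~18 is the configuration with an isolated point off the cycle, and it is treated separately. Consequently, when you form $X_k(c)$, the preimage of $q$ is a \emph{single} small point, not $k$ small points, so the count on which your bound $k\le 3$ rests is incorrect — and that bound had better be wrong, since the lemma permits the unknot with arbitrary $k$. The same misreading undermines the proposed lift to a branched cover of $M$: the preimage $\pi^{-1}(c)$ is a union of a family of orbits of one dimension over the interior of the edge together with a lower-dimensional orbit over $q$ (whose isotropy has rank one by Lemma~\ref{L:codim2}), so it is not a smooth codimension-two submanifold, and the claimed branched cover $\tilde M\to M$ with a lifted $G$-action is not available. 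Finally, you explicitly leave the decisive pruning step unfinished, so even setting the above aside the argument is incomplete.

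The paper's route is structurally different and much tighter. It works with the fixed two-fold branched cover $X_2(c)$ (regardless of $k$), observes that $X_2(c)$ has exactly one small point (precisely because $q\in c$), so its universal cover has $|\pi_1(X_2(c))|$ small points and hence $|\pi_1(X_2(c))|\le 3$; combines this with the fact that $X_2(c)$ is a $\Z_2$-homology sphere (Grove--Wilking \cite[Lemma 5.3]{GW14}) to conclude $\pi_1(X_2(c))\in\{1,\Z_3\}$; invokes \cite[Lemma 5.4]{GW14} to identify $c$ as either the unknot or the trefoil; and then rules out $k\ge 3$ for the trefoil by noting $\pi_1(X_3(c))=Q_8$, whose universal cover would carry eight small points. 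If you want to salvage your idea, the correct starting point is one small point in the branched cover (not $k$), and you will need a homology-sphere input like \cite[Lemma 5.3]{GW14} — the Hsiang--Kleiner bound alone only gives $|\pi_1|\le 3$ and does not distinguish the knot without it.
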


\begin{proof}
The universal cover of the two-fold branched cover $X_2(c)$ has $|\pi_1(X_2(c))|$ small points, therefore $|\pi_1(X_2(c))|\leq 3$. Furthermore, $X_2(c)$ is a $\mathbb{Z}_2$-homology sphere 
by \cite[Lemma 5.3]{GW14}. Hence either $X_2(c)$ is simply connected (thus $c$ is the unknot) or it is the unique lens space with fundamental group $\mathbb{Z}_3$. 
This uniquely determines a knot (\cite[Lemma 5.4]{GW14}) which is a \emph{2-bridge knot} and in fact the trefoil knot \cite[Section 10.D]{Rol03}.

If $c$ is a trefoil knot, suppose that the weight of the edge is $\geq 3$. Then the triple branched cover $X_3(c)$ would still be a positively curved Alexandrov space with one small point, 
and $\pi_1(X_3(c))=Q_8$ the quaternion $8$-group (cf. \cite[Section 10.D]{Rol03}). In particular, the universal cover of $X_3(c)$ would be positively curved Alexandrov space with $|Q_8|=8$ small points, 
a contradiction.
\end{proof}

\begin{proposition}
Let $M$, $X=M/G$, be as in Lemma \ref{L:knot}. Then $M$ is rationally elliptic.
\end{proposition}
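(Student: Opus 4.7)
My plan is to split the analysis according to the knot type of $c$ provided by Lemma~\ref{L:knot}. If $c$ is the trefoil knot, then $k=2$ and graph~$7$ has exactly one non-orbifold point (the vertex $v$) together with an edge of weight two, matching hypotheses (1)--(3) of Section~\ref{SS:one}; Proposition~\ref{P:one} then yields rational ellipticity of $M$ directly.

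If instead $c$ is the unknot, I would apply Proposition~\ref{P:disk bundle} via a double disk bundle decomposition. Fix any point $v'\in c\setminus\{v\}$, so that $v'$ lies on the interior of the singular edge and has isotropy $\Z_k$, and take $A_*=\{v\}$ and $B_*=\{v'\}$. Then $A=L_v\cong G/G_v$ and $B=L_{v'}\cong G/\Z_k$ are homogeneous spaces, hence smooth and rationally $\Omega$-elliptic. The complement $X\setminus B_\epsilon(\{v,v'\})$ is topologically $S^2\times[0,1]$, and its singular set consists of the two weight-$k$ arcs of $c\setminus(B_\epsilon(v)\cup B_\epsilon(v'))$, each running from $\partial B_\epsilon(v)$ to $\partial B_\epsilon(v')$. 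By Lemma~\ref{L:codim2} the spaces of directions at both $v$ and $v'$ are spindles of the form $S^2_{kk}$, and each arc meets each boundary spindle at one of its two cone points.

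The decisive step is then to produce a weight-preserving homeomorphism from the complement to $S^2_{kk}\times[0,1]$ sending each singular arc onto an axis $\{p_i\}\times[0,1]$. Since $c$ is the unknot, I would realize $S^3$ as the unreduced suspension of $S^2$ with suspension points $v,v'$ and with $c$ chosen as the preimage of two distinct points $p_1,p_2\in S^2$; this places the two arcs of $c\setminus\{v,v'\}$ at the vertical axes $\{p_1\}\times[0,1]$ and $\{p_2\}\times[0,1]$ inside $S^2\times[0,1]$. Lemma~\ref{L:hom-to-diff} then upgrades this weight-preserving homeomorphism to an orbifold diffeomorphism, and Proposition~\ref{P:disk bundle} applied with $\mathcal{O}=S^2_{kk}$ finishes the argument, since $A=L_v$ is rationally $\Omega$-elliptic.

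The main obstacle will be precisely this ambient-isotopy step: placing the two arcs of $c\setminus\{v,v'\}$ in parallel vertical position inside $S^2\times[0,1]$ is possible only because $c$ is unknotted, and this is exactly why the trefoil case cannot be treated by the same double disk bundle decomposition and has to be routed separately through Proposition~\ref{P:one} using its edge of weight two.
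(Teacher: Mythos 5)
Your proof is correct and takes essentially the same approach as the paper: route the trefoil case ($k=2$) through Proposition~\ref{P:one} via the weight-two edge, and for the unknotted case take $A_*$ the non-orbifold point and $B_*$ a point on the edge, realize $X'$ as weight-preserving homeomorphic to $S^2_{k,k}\times[0,1]$ using unknottedness of $c$, upgrade to an orbifold diffeomorphism by Lemma~\ref{L:hom-to-diff}, and conclude with Proposition~\ref{P:disk bundle}. One small imprecision: the space of directions at the non-orbifold vertex $v$ is not the orbifold spindle $S^2_{kk}$ (it is a non-orbifold space by definition of $v$), though this does not affect the argument since what matters is only that the two singular arcs meet $\partial B_\epsilon(v)$ and $\partial B_\epsilon(v')$ at isolated points.
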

\begin{proof}
From Lemma \ref{L:knot}, the singular loop $c$ is either the unknot, or it is the trefoil knot and $k=2$. The knotted case follows from Proposition \ref{P:one}, since we have an edge with weight two. Assume now that $c$ is the unknot, and take $A_*$ to be the non-orbifold point and $B_*$ to be a point in the singular edge. Clearly, the preimages of $A_*$ and $B_*$ in $M$ are orbits 
hence smooth. Furthermore, since $c$ is the unknot, we can find a homeomorphism $X\to S^3$ mapping $A_*$ to the north pole, $B_*$ to the south pole, and the rest of $c$ onto two meridians. 
In particular, $X'$ admits a strata and weight-preserving homeomorphism to $S^2_{k,k}\times [0,1]$ hence by Lemma \ref{L:hom-to-diff} it is diffeomorphic to it, and the result follows from Proposition \ref{P:disk bundle}.
\end{proof}

\appendix
\section{List of actions providing the 3-dimensional quotients}

%As in the main body of the paper, we consider closed, simply connected, positively curved Riemannian manifolds admitting a cohomogeneity three action by a connected, compact Lie group $G$ such that the quotient space $X=M/G$ has empty boundary.

When proving the main theorem, we showed that for all of the possible candidates of weighted graphs in the quotients, the corresponding manifold is rationally elliptic, without checking which of those candidates can indeed occur. For the convenience of an interested reader we provide, in Table \ref{Tb:Examples-list}, examples of cohomogeneity-three $G$-actions on positively curved manifolds exhibiting most of the candidates of weighted graphs shown in Proposition \ref{P:1-dim stratum}. We do not know if every example from Proposition \ref{P:1-dim stratum} can be achieved, or whether it can be done so with $G$ a connected group.  Whenever possible, we provide examples where $G$ is connected. The possible weighted graphs for which we do not know any action, are provided in Table \ref{Tb:unknown-examples}. In the tables, $\pi:SU(2)\to SO(3)$ denotes the standard double cover, $G_{pqr}\subset SO(3)$ are the subgroups in Table \ref{Ta:groups}, and $G_{pqr}^*:=\pi^{-1}(G_{pqr})\subset SU(2)$.

In all cases, the quotient space is homeomorphic to $S^3$. Furthermore, the singular set is \emph{planar}, i.e. contained in some $S^2$ embedded in $S^3$, 
with the exception of the second action with singular structure given by
\[
\begin{tikzpicture}
\draw (0,1) ellipse [x radius=18pt, y radius=6pt];
\filldraw[black] (0.64,1) circle (2pt);
\filldraw[black] (0,0.55) circle (0pt);
\filldraw[black] (0.04,1.13) circle (0pt) node[anchor=south] {$k$};
\end{tikzpicture}
\]
where the singular loop forms a trefoil knot in $M/G$.

\newpage

\begin{center}

\renewcommand{\arraystretch}{1.5}
\begin{longtable}{|c|c|c|c|}
\hline
\multicolumn{1}{|c|}{Configuration} & \multicolumn{1}{|c|}{$M$} & \multicolumn{1}{|c|}{$G$} & \multicolumn{1}{|c|}{Description of the action}\\
\hline
\multicolumn{1}{|c|}{\multirow{2}{*}{\begin{tikzpicture}
\filldraw[black] (0,0) circle (2pt);
\filldraw[black] (0,-0.55) circle (0pt);
\end{tikzpicture}}} & \multicolumn{1}{|c|}{$S^6\subset \R^3\oplus \HH$} & \multicolumn{1}{|c|}{${\mathrm{SU}}(2)$} & \multicolumn{1}{|l|}{\hspace{-0.08cm}$q\cdot(v,w)=(qv\bar{q},qw)$}\\%,~$v\in\R^3$,~$w\in\mathbb{H}$}\\
\hline
\multicolumn{1}{|c|}{\multirow{2}{*}{\begin{tikzpicture}
\filldraw[black] (-1,0) circle (2pt);
\filldraw[black] (0,0) circle (2pt);
\filldraw[black] (0,-0.55) circle (0pt);
\end{tikzpicture}}} & \multicolumn{1}{|c|}{$S^4\subset \mathbb{C}^2\oplus \R$} & \multicolumn{1}{|c|}{$S^1$} & \multicolumn{1}{|l|}{\hspace{-0.08cm}$z\cdot(w_1,w_2,t)=(zw_1,zw_2,t)$}\\%,~$w_i\in\mathbb{C}$,~$t\in\R$}\\
\hline
\multicolumn{1}{|c|}{\multirow{2}{*}{\begin{tikzpicture}
\filldraw[black] (-0.5,-0.3) circle (2pt);
\filldraw[black] (-1,-1) circle (2pt);
\filldraw[black] (0,-1) circle (2pt);
\end{tikzpicture}}} & \multicolumn{1}{|c|}{\multirow{2}{*}{$S^5\subset \C^3$}} & \multicolumn{1}{|c|}{\multirow{2}{*}{$T^2$}} & \multicolumn{1}{|l|}{\multirow{2}{*}{\hspace{-0.11cm}$(z_1,z_2)\cdot(w_1,w_2,w_3)=(z_1w_1,z_2w_2,z_1z_2w_3)$}}\\
\multicolumn{1}{|c|}{} & \multicolumn{1}{|c|}{} & \multicolumn{1}{|c|}{} & \multicolumn{1}{|c|}{}\\
\hline
\multicolumn{1}{|c|}{\begin{tikzpicture}
\draw (-1,1) -- (0,1);
\filldraw[black] (-1,1) circle (2pt);
\filldraw[black] (0,1) circle (2pt);
\filldraw[black] (0,1.1) circle (0pt);
\filldraw[black] (-0.5,1.52) circle (0pt) node[anchor=north] {$k$};
\end{tikzpicture}} & \multicolumn{1}{|c|}{$S^4\subset \mathbb{C}^2\oplus \R$} & \multicolumn{1}{|c|}{$S^1$} & \multicolumn{1}{|l|}{\hspace{-0.08cm}$z\cdot(w_1,w_2,t)=(zw_1,z^kw_2,t)$}\\%,~$w_i\in\mathbb{C}$,~$t\in\R$}\\
\hline
\multicolumn{1}{|c|}{\multirow{2}{*}{\begin{tikzpicture}
\draw (-1,1) -- (0,1);
\draw (0,1) -- (-0.5,0.1);
\filldraw[black] (-0.5,0.1) circle (2pt);
\filldraw[black] (-1,1) circle (2pt);
\filldraw[black] (0,1) circle (2pt);
\filldraw[black] (-0.5,1.52) circle (0pt) node[anchor=north] {$k$};
\filldraw[black] (-0.36,0.44) circle (0pt) node[anchor=west] {$\ell$};
\end{tikzpicture}}} & \multicolumn{1}{|c|}{\multirow{2}{*}{$S^5\subset \C^3$}} & \multicolumn{1}{|c|}{\multirow{2}{*}{$T^2$}} & \multicolumn{1}{|l|}{\multirow{2}{*}{\hspace{-0.11cm}$(z_1,z_2)\cdot(w_1,w_2,w_3)=(z_2w_1,z_1^kz_2^{\ell}w_2,z_1w_3)$}}\\
\multicolumn{1}{|c|}{} & \multicolumn{1}{|c|}{} & \multicolumn{1}{|c|}{} & \multicolumn{1}{|c|}{}\vspace{-0.5cm}\\
\multicolumn{1}{|c|}{} & \multicolumn{1}{|c|}{} & \multicolumn{1}{|c|}{} & \multicolumn{1}{|c|}{}\\
\hline
\multicolumn{1}{|c|}{} & \multicolumn{1}{|c|}{} & \multicolumn{1}{|c|}{} & \multicolumn{1}{|c|}{}\vspace{-0.5cm}\\
\multicolumn{1}{|c|}{\multirow{2}{*}{\begin{tikzpicture}
\draw (-1.8,0.9) -- (-1,0.3) node[midway, above] {$2$};
\draw (-1,0.3) -- (0,0.3) node[midway, above] {$2$};
\draw  (-1.8,-0.3) -- (-1,0.3) node[midway, below] {$2$};
\filldraw[white] (-1,0.3) circle (2pt);
\draw[black] (-1,0.3) circle (2pt);
\filldraw[black] (-1.8,0.9) circle (2pt); 
\filldraw[black] (0,0.3) circle (2pt);
\filldraw[black] (-1.8,-0.3) circle (2pt);
\end{tikzpicture}}} & \multicolumn{1}{|c|}{\multirow{2}{*}{${\mathrm{SU}(3)}/{T^2}$}} & \multicolumn{1}{|c|}{\multirow{2}{*}{$\mathrm{SO}(3)$}} & \multicolumn{1}{|l|}{\multirow{2}{*}{\hspace{-0.08cm}$A\cdot[B]=[AB]$}}\\
\multicolumn{1}{|c|}{} & \multicolumn{1}{|c|}{} & \multicolumn{1}{|c|}{} & \multicolumn{1}{|c|}{}\vspace{-0.5cm}\\
\multicolumn{1}{|c|}{} & \multicolumn{1}{|c|}{} & \multicolumn{1}{|c|}{} & \multicolumn{1}{|c|}{}\\
\hline
\multicolumn{1}{|c|}{\multirow{4}{*}{
\begin{tikzpicture}
\draw (0,1) ellipse [x radius=18pt, y radius=6pt];
\filldraw[black] (0.64,1) circle (2pt);
\filldraw[black] (0,0.55) circle (0pt);
\filldraw[black] (0.04,1.13) circle (0pt) node[anchor=south] {$k$};
\end{tikzpicture}}} & \multicolumn{1}{|c|}{$S^6\subset \R^3\oplus \HH$} & \multicolumn{1}{|c|}{$\mathrm{SU}(2)\times\Z_k$} & \multicolumn{1}{|l|}{\hspace{-0.08cm}$(q,g)\cdot(v,w)=(qv\bar{q},qw\bar g)$}\vspace{-0.08cm}\\
%\multicolumn{1}{|c|}{} & \multicolumn{1}{|c|}{} & \multicolumn{1}{|c|}{} & \multicolumn{1}{|l|}{\hspace{-0.08cm}$\Z_k=\langle g\rangle$,~$v\in\R^3$,~$w\in\mathbb{H}$}\\
\multicolumn{1}{|c|}{} & \multicolumn{1}{|c|}{} & \multicolumn{1}{|c|}{} & \multicolumn{1}{|c|}{}\vspace{-0.6cm}\\\cline{2-4}
\multicolumn{1}{|c|}{} & \multicolumn{1}{|c|}{\multirow{3}{*}{$S^5\subset \C^3$}} & \multicolumn{1}{|c|}{\multirow{3}{*}{$(T^2\rtimes\Z_3)\rtimes\Z_2$}} & \multicolumn{1}{|l|}{\hspace{-0.11cm}$(z_1,z_2)\cdot(w_1,w_2,w_3)=(z_1w_1,z_2w_2,\bar{z}_1\bar{z}_2w_3)$}\vspace{-0.08cm}\\
\multicolumn{1}{|c|}{} & \multicolumn{1}{|c|}{} & \multicolumn{1}{|c|}{} & \multicolumn{1}{|l|}{\hspace{-0.08cm}$g\cdot(w_1,w_2,w_3)=(gw_2,gw_3,gw_1)$,~$\Z_3=\langle g\rangle$}\vspace{-0.08cm}\\
\multicolumn{1}{|c|}{} & \multicolumn{1}{|c|}{} & \multicolumn{1}{|c|}{} & \multicolumn{1}{|l|}{\hspace{-0.08cm}$h\cdot(w_1,w_2,w_3)=(\bar{w}_1,\bar{w}_3,\bar{w}_2)$,~$\Z_2=\langle h\rangle$}\\
\hline
\multicolumn{1}{|c|}{\multirow{2}{*}{\begin{tikzpicture}
\draw (0,1) ellipse [x radius=18pt, y radius=6pt];
\filldraw[black] (-0.64,1) circle (2pt);
\filldraw[black] (0.64,1) circle (2pt);
\filldraw[black] (0,1.74) circle (0pt) node[anchor=north] {$k$};
\filldraw[black] (0,0.26) circle (0pt) node[anchor=south] {$\ell$};
\filldraw[black] (0.1,-0.36) circle (0pt) node[anchor=south] {$(k,\ell)=d$};
\end{tikzpicture}}} & \multicolumn{1}{|c|}{\multirow{2}{*}{$S^4\subset \mathbb{C}^2\oplus \R$}} & \multicolumn{1}{|c|}{\multirow{2}{*}{$S^1\times \Z_d$}} & \multicolumn{1}{|l|}{\multirow{2}{*}{\hspace{-0.27cm}$\begin{array}{l}(z,g)\cdot(w_1,w_2,t)=(g^az^{k/d}w_1,g^bz^{{\ell}/d}w_2,t)\vspace{-0.12cm}\\ \hspace{0.05cm}a\ell-bk=d\end{array}$}}\\%,~$w_i\in\mathbb{C}$,~$t\in\R$}}\\
\multicolumn{1}{|c|}{} & \multicolumn{1}{|c|}{} & \multicolumn{1}{|c|}{} & \multicolumn{1}{|c|}{}\\
\multicolumn{1}{|c|}{} & \multicolumn{1}{|c|}{} & \multicolumn{1}{|c|}{} & \multicolumn{1}{|c|}{}\\
\hline
\multicolumn{1}{|c|}{\multirow{3}{*}{\begin{tikzpicture}
\draw (-1,1) -- (0,1);
\draw (0,1) -- (-0.5,0.1);
\draw (-1,1) -- (-0.5,0.1);
\filldraw[black] (-0.5,0.1) circle (2pt);
\filldraw[black] (-1,1) circle (2pt);
\filldraw[black] (0,1) circle (2pt);
\filldraw[black] (-0.5,1.52) circle (0pt) node[anchor=north] {$k$};
\filldraw[black] (-0.36,0.44) circle (0pt) node[anchor=west] {$m$};
\filldraw[black] (-0.64,0.44) circle (0pt) node[anchor=east] {$\ell$};
\filldraw[black] (-0.4,-0.7) circle (0pt) node[anchor=south] {$(k,\ell,m)=1$};
\end{tikzpicture}}} & \multicolumn{1}{|c|}{\multirow{3}{*}{$S^5\subset \C^3$}} & \multicolumn{1}{|c|}{\multirow{3}{*}{$T^2$}} & \multicolumn{1}{|l|}{\multirow{2}{*}{\hspace{-0.27cm}$\begin{array}{l}(z_1,z_2)\cdot(w_1,w_2,w_3)=(z_2^{d}w_1,z_1^{a}z_2^{c}w_2,z_1^{b}z_2^{e}w_3)\vspace{-0.12cm}\\ \hspace{0.05cm}d=\gcd(\ell, m)=\mu m+\lambda \ell,\vspace{-0.12cm}\\ \hspace{0.05cm}a=m/d,\, b=-\ell/d,\, c=\lambda k,\, e=\mu k  \end{array}$}}\\
\multicolumn{1}{|c|}{} & \multicolumn{1}{|c|}{} & \multicolumn{1}{|c|}{} & \multicolumn{1}{|c|}{}\\
\multicolumn{1}{|c|}{} & \multicolumn{1}{|c|}{} & \multicolumn{1}{|c|}{} & \multicolumn{1}{|c|}{}\\
\hline
\multicolumn{1}{|c|}{\multirow{2}{*}{\begin{tikzpicture}
\draw (-0.6,1) -- (0.6,1);
\draw (-1,1) ellipse [x radius=13pt, y radius=6pt];
\filldraw[black] (-0.54,1) circle (2pt);
\filldraw[black] (0.6,1) circle (2pt);
\filldraw[black] (0,0.5) circle (0pt);
\filldraw[black] (0.04,0.9) circle (0pt) node[anchor=south] {$k$};
\filldraw[black] (-0.94,1.13) circle (0pt) node[anchor=south] {$2$};
\end{tikzpicture}}} & \multicolumn{1}{|c|}{\multirow{2}{*}{$S^4\subset \mathbb{C}^2\oplus \R$}} & \multicolumn{1}{|c|}{\multirow{2}{*}{$S^1\rtimes\Z_2$}} & \multicolumn{1}{|l|}{\hspace{-0.08cm}$z\cdot(w_1,w_2,t)=(zw_1,z^kw_2,t)$}\vspace{-0.08cm}\\
\multicolumn{1}{|c|}{} & \multicolumn{1}{|c|}{} & \multicolumn{1}{|c|}{} & \multicolumn{1}{|l|}{\hspace{-0.08cm}$g\cdot(w_1,w_2,t)=(\bar{w}_1,\bar{w}_2,-t)$,~$\Z_2=\langle g\rangle$}\\
\hline
\multicolumn{1}{|c|}{\multirow{2}{*}{\begin{tikzpicture}
\draw (0,1) ellipse [x radius=18pt, y radius=6pt];
\draw (-0.64,1) -- (0,-0.1);
\filldraw[black] (-0.73,0.4) circle (0pt) node[anchor=west] {$2$};
\filldraw[black] (0,1.7) circle (0pt) node[anchor=north] {$3$};
\filldraw[black] (0,0.3) circle (0pt) node[anchor=south] {$2$};
\filldraw[black] (0.64,1) circle (2pt);
\filldraw[white] (-0.64,1) circle (2pt);
\draw[black] (-0.64,1) circle (2pt);
\filldraw[black] (0,-0.1) circle (2pt);
\end{tikzpicture}}} & \multicolumn{1}{|c|}{\multirow{3}{*}{$\C\mathrm{P}^3$}} & \multicolumn{1}{|c|}{\multirow{3}{*}{$\mathrm{SO}(3)$}} & \multicolumn{1}{|l|}{\multirow{3}{*}{\hspace{-0.26cm}$\begin{array}{l}[A]\cdot[p(z,w)]=[A\cdot p(z,w)]\\p(z,w)\in\C[z,w]_3\end{array}$}}\vspace{-0.08cm}\\
\multicolumn{1}{|c|}{} & \multicolumn{1}{|c|}{} & \multicolumn{1}{|c|}{} & \multicolumn{1}{|l|}{}\\
\multicolumn{1}{|c|}{} & \multicolumn{1}{|c|}{} & \multicolumn{1}{|c|}{} & \multicolumn{1}{|c|}{}\\
\hline
\multicolumn{1}{|c|}{} & \multicolumn{1}{|c|}{} & \multicolumn{1}{|c|}{} & \multicolumn{1}{|l|}{}\vspace{-0.5cm}\\
\multicolumn{1}{|c|}{\begin{tikzpicture}
\draw (0,1) ellipse [x radius=23pt, y radius=10pt];
\draw (-0.8,1) -- (0.8,1);
\filldraw[black] (0.8,1) circle (2pt);
\filldraw[white] (-0.8,1) circle (2pt);
\draw[black] (-0.8,1) circle (2pt);
\filldraw[black] (0,1.8) circle (0pt) node[anchor=north] {$p$};
\filldraw[black] (0,1.3) circle (0pt) node[anchor=north] {$q$};
\filldraw[black] (0,0.24) circle (0pt) node[anchor=south] {$r$};
\end{tikzpicture}} & \multicolumn{1}{|c|}{} & \multicolumn{1}{|c|}{} & \multicolumn{1}{|l|}{}\vspace{-1.5cm}\\
\multicolumn{1}{|c|}{} & \multicolumn{1}{|c|}{$S^6\subset \im\HH\oplus \HH$} & \multicolumn{1}{|c|}{$SU(2)\times G_{pqr}^*$} & \multicolumn{1}{|l|}{\hspace{-0.11cm}$(A,g)\cdot(x,y)=(\pi(A)x,Ayg^{-1})$}\\
\multicolumn{1}{|c|}{} & \multicolumn{1}{|c|}{} & \multicolumn{1}{|c|}{} & \multicolumn{1}{|c|}{}\\
\hline
\newpage
\hline
\multicolumn{1}{|c|}{} & \multicolumn{1}{|c|}{} & \multicolumn{1}{|c|}{} & \multicolumn{1}{|l|}{}\vspace{-0.7cm}\\
\multicolumn{1}{|c|}{\begin{tikzpicture}
\draw (0,1) ellipse [x radius=23pt, y radius=10pt];
\draw (-0.8,1) -- (0.8,1);
\filldraw[black] (0.8,1) circle (2pt);
\filldraw[black] (-0.8,1) circle (2pt);
\filldraw[black] (0,1.8) circle (0pt) node[anchor=north] {$p$};
\filldraw[black] (0,1.3) circle (0pt) node[anchor=north] {$q$};
\filldraw[black] (0,0.24) circle (0pt) node[anchor=south] {$r$};
\end{tikzpicture}} &  \multicolumn{1}{|c|}{} & \multicolumn{1}{|c|}{} & \multicolumn{1}{|l|}{}\vspace{-1.5cm}\\
\multicolumn{1}{|c|}{} & \multicolumn{1}{|c|}{$S^4\subset \mathbb{H}\oplus \R$} & \multicolumn{1}{|c|}{$S^1\cdot G_{pqr}^*$} & \multicolumn{1}{|l|}{\hspace{-0.11cm}$(ge^{i\theta})\cdot(x,t)=(ge^{i\theta}x,t)$}\\
\multicolumn{1}{|c|}{} & \multicolumn{1}{|c|}{} & \multicolumn{1}{|c|}{} & \multicolumn{1}{|l|}{}\vspace{-0.2cm}\\
\hline
\multicolumn{1}{|c|}{\multirow{2}{*}{\begin{tikzpicture}
\draw (0,1) ellipse [x radius=18pt, y radius=6pt];
\draw (-0.64,1) -- (0,-0.1);
\draw (0.64,1) -- (0,-0.1);
\filldraw[black] (-0.18,0.3) circle (0pt) node[anchor=east] {$2$};
\filldraw[black] (0.19,0.3) circle (0pt) node[anchor=west] {$2$};
\filldraw[black] (0,1.7) circle (0pt) node[anchor=north] {$2$};
\filldraw[black] (0,0.3) circle (0pt) node[anchor=south] {$2$};
\filldraw[black] (0.64,1) circle (2pt);
\filldraw[white] (-0.64,1) circle (2pt);
\draw[black] (-0.64,1) circle (2pt);
\filldraw[black] (0,-0.1) circle (2pt);
\end{tikzpicture}}} & \multicolumn{1}{|c|}{\multirow{3}{*}{$S^5\subset \C^3$}} & \multicolumn{1}{|c|}{\multirow{3}{*}{$\begin{array}{c}(T^2\times  G)\rtimes H\\ G\cong H\cong \Z_2\end{array}$}} & \multicolumn{1}{|l|}{\hspace{-0.11cm}$(z_1,z_2)\cdot(w_1,w_2,w_3)=(z_1w_1,z_2w_2,z_1z_2w_3)$}\vspace{-0.08cm}\\
\multicolumn{1}{|c|}{} & \multicolumn{1}{|c|}{} & \multicolumn{1}{|c|}{} & \multicolumn{1}{|l|}{\hspace{-0.08cm}$g\cdot(w_1,w_2,w_3)=(w_1,-w_2,w_3)$}\vspace{-0.08cm}\\
\multicolumn{1}{|c|}{} & \multicolumn{1}{|c|}{} & \multicolumn{1}{|c|}{} & \multicolumn{1}{|l|}{\hspace{-0.08cm}$h\cdot(w_1,w_2,w_3)=(\bar{w}_2,\bar{w}_1,\bar{w}_3)$}\\
\hline
%
%\multicolumn{1}{|c|}{\multirow{2}{*}{\begin{tikzpicture}
%\draw (0,1) ellipse [x radius=18pt, y radius=6pt];
%\draw (-0.64,1) -- (0,-0.1);
%\draw (0.64,1) -- (0,-0.1);
%\filldraw[black] (-0.18,0.3) circle (0pt) node[anchor=east] {$k$};
%\filldraw[black] (0.19,0.3) circle (0pt) node[anchor=west] {$\ell$};
%\filldraw[black] (0,1.7) circle (0pt) node[anchor=north] {$2$};
%\filldraw[black] (0,0.3) circle (0pt) node[anchor=south] {$2$};
%\filldraw[black] (0.64,1) circle (2pt);
%\filldraw[white] (-0.64,1) circle (2pt);
%\draw[black] (-0.64,1) circle (2pt);
%\filldraw[black] (0,-0.1) circle (2pt);
%\filldraw[black] (-0.07,-1) circle (0pt) node[anchor=south] {$(k,\ell)=1$};
%\end{tikzpicture}}} & \multicolumn{1}{|c|}{\multirow{3}{*}{$S^5\subset \C^3$}} & \multicolumn{1}{|c|}{\multirow{3}{*}{$T^2\rtimes\Z_2$}} & \multicolumn{1}{|l|}{\multirow{2}{*}{\hspace{-0.11cm}$(z_1,z_2)\cdot(w_1,w_2,w_3)=(z_2^{\ell}w_1,z_1z_2^kw_2,z_1w_3)$}}\vspace{-0.1cm}\\
%\multicolumn{1}{|c|}{} & \multicolumn{1}{|c|}{} & \multicolumn{1}{|c|}{} & \multicolumn{1}{|l|}{\multirow{2}{*}{\hspace{-0.08cm}$g\cdot(w_1,w_2,w_3)=(\bar{w}_1,w_3,w_2)$,~$\Z_2=\langle g\rangle$}}\\
%\multicolumn{1}{|c|}{} & \multicolumn{1}{|c|}{} & \multicolumn{1}{|c|}{} & \multicolumn{1}{|c|}{}\vspace{-0.1cm}\\
%\multicolumn{1}{|c|}{} & \multicolumn{1}{|c|}{} & \multicolumn{1}{|c|}{} & \multicolumn{1}{|c|}{}\\
%\hline
%%
\multicolumn{1}{|c|}{\multirow{2}{*}{\begin{tikzpicture}
\draw (0,1) ellipse [x radius=18pt, y radius=6pt];
\draw (-0.64,1) -- (0,-0.1);
\draw (0.64,1) -- (0,-0.1);
\filldraw[black] (-0.18,0.3) circle (0pt) node[anchor=east] {$k d$};
\filldraw[black] (0.19,0.3) circle (0pt) node[anchor=west] {$\ell d$};
\filldraw[black] (0,1.7) circle (0pt) node[anchor=north] {$2$};
\filldraw[black] (0,0.3) circle (0pt) node[anchor=south] {$2$};
\filldraw[black] (0.64,1) circle (2pt);
\filldraw[white] (-0.64,1) circle (2pt);
\draw[black] (-0.64,1) circle (2pt);
\filldraw[black] (0,-0.1) circle (2pt);
\filldraw[black] (-0.08,-1) circle (0pt) node[anchor=south] {$(k,\ell)=1$};
\end{tikzpicture}}} & \multicolumn{1}{|c|}{\multirow{3}{*}{$S^5\subset \C^3$}} & \multicolumn{1}{|c|}{\multirow{3}{*}{$\begin{array}{c}(T^2\times G)\rtimes H \\ G\cong \Z_{d},\, H\cong \Z_2\end{array}$}} & \multicolumn{1}{|l|}{\hspace{-0.11cm}$(z_1,z_2)\cdot(w_1,w_2,w_3)=(z_2^{\ell}w_1,z_1z_2^{k}w_2,z_1w_3)$}\\
\multicolumn{1}{|c|}{} & \multicolumn{1}{|c|}{} & \multicolumn{1}{|c|}{} & \multicolumn{1}{|l|}{\hspace{-0.08cm}$g\cdot(w_1,w_2,w_3)=(gw_1,gw_2,w_3)$}\vspace{-0.08cm}\\
\multicolumn{1}{|c|}{} & \multicolumn{1}{|c|}{} & \multicolumn{1}{|c|}{} & \multicolumn{1}{|l|}{\hspace{-0.08cm}$h\cdot(w_1,w_2,w_3)=(\bar{w}_1,w_3,w_2)$}\\
\multicolumn{1}{|c|}{} & \multicolumn{1}{|c|}{} & \multicolumn{1}{|c|}{} & \multicolumn{1}{|c|}{}\\
\hline
\multicolumn{1}{|c|}{\multirow{2}{*}{\begin{tikzpicture}
\draw (0,1) ellipse [x radius=18pt, y radius=6pt];
\draw (-0.64,1) -- (0,-0.1);
\draw (0.64,1) -- (0,-0.1);
\filldraw[black] (-0.18,0.3) circle (0pt) node[anchor=east] {$2$};
\filldraw[black] (0.19,0.3) circle (0pt) node[anchor=west] {$2$};
\filldraw[black] (0,1.7) circle (0pt) node[anchor=north] {$2$};
\filldraw[black] (0,0.3) circle (0pt) node[anchor=south] {$2$};
\filldraw[white] (0.64,1) circle (2pt);
\draw[black] (0.64,1) circle (2pt);
\filldraw[white] (-0.64,1) circle (2pt);
\draw[black] (-0.64,1) circle (2pt);
\filldraw[black] (0,-0.1) circle (2pt);
\end{tikzpicture}}} & \multicolumn{1}{|c|}{\multirow{3}{*}{$S^4\subset \mathbb{C}^2\oplus \R$}} & \multicolumn{1}{|c|}{\multirow{3}{*}{$\begin{array}{c}(S^1\rtimes G)\times H\\ G\cong H\cong \Z_2\end{array}$}} & \multicolumn{1}{|l|}{\hspace{-0.08cm}$z\cdot(w_1,w_2,t)=(zw_1,zw_2,t)$}\vspace{-0.08cm}\\
\multicolumn{1}{|c|}{} & \multicolumn{1}{|c|}{} & \multicolumn{1}{|c|}{} & \multicolumn{1}{|l|}{\hspace{-0.08cm}$g\cdot(w_1,w_2,t)=(\bar{w}_2,\bar{w}_1,-t)$}\vspace{-0.08cm}\\
\multicolumn{1}{|c|}{} & \multicolumn{1}{|c|}{} & \multicolumn{1}{|c|}{} & \multicolumn{1}{|l|}{\hspace{-0.08cm}$h\cdot(w_1,w_2,t)=(-w_2,-w_1,t)$}\\
\hline
%%
%\multicolumn{1}{|c|}{\multirow{2}{*}{\begin{tikzpicture}
%\draw (0,1) ellipse [x radius=18pt, y radius=6pt];
%\draw (-0.64,1) -- (0,-0.1);
%\draw (0.64,1) -- (0,-0.1);
%\filldraw[black] (-0.18,0.3) circle (0pt) node[anchor=east] {$k$};
%\filldraw[black] (0.19,0.3) circle (0pt) node[anchor=west] {$\ell$};
%\filldraw[black] (0,1.7) circle (0pt) node[anchor=north] {$2$};
%\filldraw[black] (0,0.3) circle (0pt) node[anchor=south] {$2$};
%\filldraw[white] (0.64,1) circle (2pt);
%\draw[black] (0.64,1) circle (2pt);
%\filldraw[white] (-0.64,1) circle (2pt);
%\draw[black] (-0.64,1) circle (2pt);
%\filldraw[black] (0,-0.1) circle (2pt);
%\filldraw[black] (-0.07,-0.9) circle (0pt) node[anchor=south] {$(k,\ell)=1$};
%\end{tikzpicture}}} & \multicolumn{1}{|c|}{\multirow{3}{*}{$S^4\subset \mathbb{C}^2\oplus \R$}} & \multicolumn{1}{|c|}{\multirow{3}{*}{$S^1\rtimes\Z_2$}} & \multicolumn{1}{|l|}{\multirow{2}{*}{\hspace{-0.08cm}$z\cdot(w_1,w_2,t)=(z^kw_1,z^{\ell}w_2,t)$}}\vspace{-0.08cm}\\
%\multicolumn{1}{|c|}{} & \multicolumn{1}{|c|}{} & \multicolumn{1}{|c|}{} & \multicolumn{1}{|l|}{\multirow{2}{*}{\hspace{-0.08cm}$g\cdot(w_1,w_2,t)=(\bar{w}_1,\bar{w}_2,-t)$}}\\
%\multicolumn{1}{|c|}{} & \multicolumn{1}{|c|}{} & \multicolumn{1}{|c|}{} & \multicolumn{1}{|c|}{}\vspace{-0.1cm}\\
%\multicolumn{1}{|c|}{} & \multicolumn{1}{|c|}{} & \multicolumn{1}{|c|}{} & \multicolumn{1}{|c|}{}\\
%\hline
%%
\multicolumn{1}{|c|}{\multirow{2}{*}{\begin{tikzpicture}
\draw (0,1) ellipse [x radius=18pt, y radius=6pt];
\draw (-0.64,1) -- (0,-0.1);
\draw (0.64,1) -- (0,-0.1);
\filldraw[black] (-0.18,0.3) circle (0pt) node[anchor=east] {$k d$};
\filldraw[black] (0.19,0.3) circle (0pt) node[anchor=west] {$\ell d$};
\filldraw[black] (0,1.7) circle (0pt) node[anchor=north] {$2$};
\filldraw[black] (0,0.3) circle (0pt) node[anchor=south] {$2$};
\filldraw[white] (0.64,1) circle (2pt);
\draw[black] (0.64,1) circle (2pt);
\filldraw[white] (-0.64,1) circle (2pt);
\draw[black] (-0.64,1) circle (2pt);
\filldraw[black] (0,-0.1) circle (2pt);
\filldraw[black] (-0.08,-0.9) circle (0pt) node[anchor=south] {$(k,\ell)=1$};
\end{tikzpicture}}} & \multicolumn{1}{|c|}{\multirow{3}{*}{$S^4\subset \mathbb{C}^2\oplus \R$}} & \multicolumn{1}{|c|}{\multirow{3}{*}{$\begin{array}{c}(S^1\times G)\rtimes H \\ G\cong \Z_{d},\, H\cong \Z_2\end{array}$}} & \multicolumn{1}{|l|}{\hspace{-0.08cm}$z\cdot(w_1,w_2,t)=(z^{k}w_1,z^{\ell}w_2,t)$}\vspace{-0.08cm}\\
\multicolumn{1}{|c|}{} & \multicolumn{1}{|c|}{} & \multicolumn{1}{|c|}{} & \multicolumn{1}{|l|}{\hspace{-0.08cm}$g\cdot(w_1,w_2,t)=(gw_1,w_2,t)$}\vspace{-0.08cm}\\
\multicolumn{1}{|c|}{} & \multicolumn{1}{|c|}{} & \multicolumn{1}{|c|}{} & \multicolumn{1}{|l|}{\hspace{-0.08cm}$h\cdot(w_1,w_2,t)=(\bar{w}_1,\bar{w}_2,-t)$}\vspace{-0.2cm}\\
\multicolumn{1}{|c|}{} & \multicolumn{1}{|c|}{} & \multicolumn{1}{|c|}{} & \multicolumn{1}{|c|}{}\\
\hline
\multicolumn{1}{|c|}{\multirow{2}{*}{\begin{tikzpicture}
\draw (-1,1) -- (0,1);
\filldraw[black] (-1,1) circle (2pt);
\filldraw[black] (0,1) circle (2pt);
\filldraw[black] (-0.5,0.2) circle (2pt);
\filldraw[black] (-0.5,1.52) circle (0pt) node[anchor=north] {$k$};
\end{tikzpicture}}} & \multicolumn{1}{|c|}{\multirow{2}{*}{$S^5\subset \C^3$}} & \multicolumn{1}{|c|}{\multirow{2}{*}{$T^2$}} & \multicolumn{1}{|l|}{\multirow{2}{*}{\hspace{-0.11cm}$(z_1,z_2)\cdot(w_1,w_2,w_3)=(z_2w_1,z_1^kz_2w_2,z_1w_3)$}}\\
\multicolumn{1}{|c|}{} & \multicolumn{1}{|c|}{} & \multicolumn{1}{|c|}{} & \multicolumn{1}{|c|}{}\vspace{-0.6cm}\\
\multicolumn{1}{|c|}{} & \multicolumn{1}{|c|}{} & \multicolumn{1}{|c|}{} & \multicolumn{1}{|c|}{}\\
\hline
\multicolumn{1}{|c|}{\multirow{2}{*}{\begin{tikzpicture}
\draw (0,1) ellipse [x radius=18pt, y radius=6pt];
\filldraw[black] (0.64,1) circle (2pt);
\filldraw[black] (0,0.1) circle (2pt);
\filldraw[black] (0.04,1.13) circle (0pt) node[anchor=south] {$2$};
\end{tikzpicture}}} & \multicolumn{1}{|c|}{\multirow{2}{*}{$S^5\subset \C^3$}} & \multicolumn{1}{|c|}{\multirow{2}{*}{$T^2\rtimes\Z_2$}} & \multicolumn{1}{|l|}{\hspace{-0.11cm}$(z_1,z_2)\cdot(w_1,w_2,w_3)=(z_1w_1,z_2w_2,z_1z_2w_3)$}\vspace{-0.08cm}\\
\multicolumn{1}{|c|}{} & \multicolumn{1}{|c|}{} & \multicolumn{1}{|c|}{} & \multicolumn{1}{|l|}{\hspace{-0.08cm}$g\cdot(w_1,w_2,w_3)=(\bar{w}_2,\bar{w}_1,\bar{w}_3)$,~$\Z_2=\langle g\rangle$}\vspace{-0.3cm}\\
\multicolumn{1}{|c|}{} & \multicolumn{1}{|c|}{} & \multicolumn{1}{|c|}{} & \multicolumn{1}{|c|}{}\\
\hline

\caption{Actions on simply connected positively curved manifolds}
\label{Tb:Examples-list}
\end{longtable}
\end{center}

\newpage

%The following table provides a list of possible singular graphs for which we do not have any examples.
\begin{center}
\begin{longtable}{ |p{4.5cm}|p{4.5cm}|p{4.5cm}|}
\hline

$1.$

\begin{center}
\begin{tikzpicture}
\draw (-1.9,1) -- (-1,0.3) node[midway, above] {$2$};
\draw (-1,0.3) -- (0.1,0.3) node[midway, above] {$k$};
\draw  (-1.9,-0.4) -- (-1,0.3) node[midway, below] {$2$};
\filldraw[white] (-1,0.3) circle (2pt);
\draw[black] (-1,0.3) circle (2pt);
\filldraw[black] (-1.9,1) circle (2pt); 
\filldraw[black] (0.1,0.3) circle (2pt);
\filldraw[black] (-1.9,-0.4) circle (2pt);
\filldraw[black] (-0.8,1.5) circle (0pt);
\filldraw[black] (-0.8,-1.4) circle (0pt) node[anchor=south] {$(k\neq 2)$};
\end{tikzpicture}
\end{center}\vspace{-0.5cm}

&
$2.$

\begin{center}
\begin{tikzpicture}
\draw (0,-0.2) -- (-0.78,1);
\filldraw[black] (-0.26,0.26) circle (0pt) node[anchor=east] {$k$};
\filldraw[black] (0,1.7) circle (0pt) node[anchor=north] {$2$};
\filldraw[black] (0,0.3) circle (0pt) node[anchor=south] {$2$};
\filldraw[black] (0.78,1) circle (2pt);
\filldraw[white] (-0.78,1) circle (2pt);
\draw[black] (-0.78,1) circle (2pt);
\filldraw[black] (0,-0.2) circle (2pt);
\draw (0,1) ellipse [x radius=22pt, y radius=6pt];
\filldraw[white] (-0.78,1) circle (2pt);
\draw[black] (-0.78,1) circle (2pt);
\end{tikzpicture}
\end{center}\vspace{-0.5cm}

&
$3.$

\begin{center}
\begin{tikzpicture}
\draw (0,-0.2) -- (-0.78,1);
\filldraw[black] (-0.26,0.26) circle (0pt) node[anchor=east] {$2$};
\filldraw[black] (0,1.7) circle (0pt) node[anchor=north] {$k$};
\filldraw[black] (0,0.3) circle (0pt) node[anchor=south] {$2$};
\filldraw[black] (0.78,1) circle (2pt);
\filldraw[white] (-0.78,1) circle (2pt);
\draw[black] (-0.78,1) circle (2pt);
\filldraw[black] (0,-0.2) circle (2pt);
\draw (0,1) ellipse [x radius=22pt, y radius=6pt];
\filldraw[white] (-0.78,1) circle (2pt);
\draw[black] (-0.78,1) circle (2pt);
\filldraw[black] (0,-1.2) circle (0pt) node[anchor=south] {$(k\neq 3)$};
\end{tikzpicture}
\end{center}\vspace{-0.5cm}\\  
\hline

$4.$

\begin{center}
\begin{tikzpicture}
\draw (0,-0.2) -- (-0.78,1);
\filldraw[black] (-0.26,0.26) circle (0pt) node[anchor=east] {$3$};
\filldraw[black] (0,1.7) circle (0pt) node[anchor=north] {$2$};
\filldraw[black] (0,0.3) circle (0pt) node[anchor=south] {$3$};
\filldraw[black] (0.78,1) circle (2pt);
\filldraw[white] (-0.78,1) circle (2pt);
\draw[black] (-0.78,1) circle (2pt);
\filldraw[black] (0,-0.2) circle (2pt);
\draw (0,1) ellipse [x radius=22pt, y radius=6pt];
\filldraw[white] (-0.78,1) circle (2pt);
\draw[black] (-0.78,1) circle (2pt);
\end{tikzpicture}
\end{center}\vspace{-0.4cm}

&
$5.$

\begin{center}
\vspace{-0.6cm}
\begin{tikzpicture}
\draw (0,-0.2) -- (-0.78,1);
\filldraw[black] (-0.26,0.26) circle (0pt) node[anchor=east] {$k$};
\filldraw[black] (0,1.7) circle (0pt) node[anchor=north] {$2$};
\filldraw[black] (0,0.3) circle (0pt) node[anchor=south] {$\ell$};
\filldraw[black] (0.78,1) circle (2pt);
\filldraw[white] (-0.78,1) circle (2pt);
\draw[black] (-0.78,1) circle (2pt);
\filldraw[black] (0,-0.2) circle (2pt);
\draw (0,1) ellipse [x radius=22pt, y radius=6pt];
\filldraw[white] (-0.78,1) circle (2pt);
\draw[black] (-0.78,1) circle (2pt);
\filldraw[black] (0,-1.1) circle (0pt) node[anchor=south] {$\{k,\ell\}=\{3,4\}$ or $\{3,5\}$};
\end{tikzpicture}
\end{center}\vspace{-0.4cm}

&
$6$

\begin{center}
\begin{tikzpicture}
\draw (-0.8,0) -- (0.8,0);
\draw (-0.8,0) -- (0,1.6);
\draw (0.8,0) -- (0,1.6);
\draw (-0.8,0) -- (0,0.63);
\draw (0.8,0) -- (0,0.63);
\draw (0,0.63) -- (0,1.6);
\filldraw[black] (0,1.6) circle (2pt);
\filldraw[white](-0.8,0) circle (2pt);
\draw[black](-0.8,0) circle (2pt);
\filldraw[white] (0.8,0) circle (2pt);
\draw[black] (0.8,0) circle (2pt);
\filldraw[white] (0,0.63) circle (2pt);
\draw[black] (0,0.63) circle (2pt);
\filldraw[black] (0,1.8) circle (0pt);
\filldraw[black] (0.56,1.22) circle (0pt) node[anchor=north] {$2$};
\filldraw[black] (0.3,0.7) circle (0pt) node[anchor=north] {$2$};
\filldraw[black] (0.1,1.22) circle (0pt) node[anchor=north] {$3$};
\filldraw[black] (-0.3,0.7) circle (0pt) node[anchor=north] {$3$};
\filldraw[black] (-0.56,1.22) circle (0pt) node[anchor=north] {$2$};
\filldraw[black] (0,0.08) circle (0pt) node[anchor=north] {$3$};
\draw[black] (0,2) circle (0pt);
\end{tikzpicture}
\end{center}\vspace{-0.4cm}\\
\hline

\caption{Weighted graphs for which we do not know corresponding quotients $M/G$.}
\label{Tb:unknown-examples}
\end{longtable}
\end{center}

\bibliographystyle{plain}		% alpha   % plain

\end{document}